
\documentclass[11pt]{article}
\usepackage{latexsym}
\usepackage{amsmath}
\usepackage{amsthm,color}
\usepackage{amssymb}
\usepackage{mathrsfs}
\usepackage[colorlinks,  linkcolor=blue,  anchorcolor=blue, citecolor=blue]{hyperref}

\usepackage{lscape}
\topmargin=0cm \oddsidemargin=0cm \textwidth=15cm \textheight=22cm
\newtheorem{theorem}{\indent Theorem}[section]

\newtheorem{lemma}[theorem]{\indent Lemma}
\newtheorem{remark}{\indent Remark}[section]

\begin{document}
\renewcommand{\baselinestretch}{1.3}


\begin{center}
    {\large \bf Normalized solutions for a coupled fractional Schr\"{o}dinger system in low dimensions}
\vspace{0.5cm}\\{\sc Meng Li$^1$}, {\sc Jinchun He$^1$}, {\sc Haoyuan Xu$^1$} and {\sc Meihua  Yang$^1$$^*$}\\
{\small 1. School of Mathematics and Statistics, Huazhong University of Science and Technology} \\
{\small Wuhan, 430074, China}
\end{center}


\renewcommand{\theequation}{\arabic{section}.\arabic{equation}}
\numberwithin{equation}{section}


\begin{abstract}
We consider the following coupled fractional Schr\"{o}dinger system:
\begin{equation*}
\left\{
\begin{aligned}
&(-\Delta)^su+\lambda_1u=\mu_1|u|^{2p-2}u+\beta|v|^p|u|^{p-2}u\\
&(-\Delta)^sv+\lambda_2v=\mu_2|v|^{2p-2}v+\beta|u|^p|v|^{p-2}v\\
\end{aligned}
\right.\quad\text{in}~{\mathbb{R}^N},
\end{equation*}
with $0<s<1$, $2s<N\le 4s$ and $1+\frac{2s}{N}<p<\frac{N}{N-2s}$, under the following constraint
\begin{align*}
\int_{\mathbb{R}^N}|u|^2dx=a_1^2\quad\text{and}\quad \int_{\mathbb{R}^N}|v|^2dx=a_2^2.
\end{align*}
Assuming that the parameters $\mu_1,\mu_2,a_1, a_2$ are fixed quantities, we prove the existence of normalized solution for different ranges of the coupling parameter $\beta>0$ .

\textbf{Keywords:} Fractional Laplacian, Schr\"{o}dinger system, Positive radial solution
\end{abstract}

\vspace{-1 cm}

\footnote[0]{ \hspace*{-7.4mm}
$^{*}$ Corresponding author.\\
AMS Subject Classification: 35J65; 35B40, 35B45. \\
E-mails:  yangmeih@hust.edu.cn}

\section{Introduction}

In this paper, we consider the following fractional Schr\"{o}dinger system with $1+\frac{2s}{N}<p<\frac{N}{N-2s}~\text{and}~2s<N\le 4s$,
\begin{equation}\label{GPE}
\left\{
\begin{aligned}
&(-\Delta)^su+\lambda_1u=\mu_1|u|^{2p-2}u+\beta|v|^p|u|^{p-2}u\\
&(-\Delta)^sv+\lambda_2v=\mu_2|v|^{2p-2}v+\beta|u|^p|v|^{p-2}v\\
\end{aligned}
\right.\quad\text{in}~{\mathbb{R}^N}.
\end{equation}
Here,
\begin{align}\label{normal}
\int_{\mathbb{R}^N}|u|^2dx=a_1^2\quad\text{and}\quad\int_{\mathbb{R}^N}|v|^2dx=a_2^2.
\end{align}

The parameters $\mu_1,~\mu_2$ and $\beta$ can be positive or negative. In the case of Laplacian, for $\mu_1,~\mu_2$ and $\beta$ are positive (resp. negative), the system is attractive (resp. repulsive).

Moreover, we analyze the existence of normalized solution of the system \eqref{GPE} for the case that the intraspecies interaction and the interspecies interaction are both attractive, i.e. $\mu_1>0,~\mu_2>0~\text{and}~\beta>0$.

One refers to this type of solutions as normalized solutions, since \eqref{normal} imposes a normalization on the $L^2$-masses of $u$ and $v$. This fact implies that $\lambda_1$ and $\lambda_2$ cannot be determined a priori, but are part of the
unknown.

The normalized solution of nonlinear
Schr\"{o}dinger equations and systems has gradually attracted the attention of a large number of researchers in recent years, both for the pure mathematical research and in view of  its very important applications in many physical problems, see more detail \cite{BC13,BJ18,BJN18,LYZ18}.

On one hand, for the nonlinear
Schr\"{o}dinger equation with $s=1$, in \cite{S20}, the author studied  existence and properties of ground states for the nonlinear Schr\"{o}dinger equation with combined power nonlinearities $p,q$ which satisfy $2<q\leq 2+\frac{4}{N}\leq p, p\neq q$. In \cite{SN20},  the author studied  existence and properties of ground states for the nonlinear Schr\"{o}dinger equation with combined power nonlinearities $q, 2^*$.

On the other hand, for the nonlinear
Schr\"{o}dinger system with $s=1$, Thomas et al. \cite{BJN18} recently proved the existence of positive solutions for the system with any arbitrary number of components in three-dimensional space.  In \cite{GJ18}, the authors considered the existence of multiple positive solutions to the nonlinear Schr\"{o}dinger systems set on
$H^1(\mathbb{R}^N)\times H^1(\mathbb{R}^N)$. In \cite{TZZ20}, the authors proved the existence of solutions $(\lambda_1,\lambda_2,u,v)\in\mathbb{R}^2\times H^1(\mathbb{R}^3)\times H^1(\mathbb{R}^3)$ to systems of coupled Schr\"{o}dinger equations.

The existence of normalized solution for fractional Schr\"{o}dinger system is an interesting problem. The fractional Schr\"{o}dinger  equation is introduced by Laskin \cite{L1,L2} through expanding the Feynman path integral from Brownian-like to L\'{e}vy-like mechanical paths. The path integral over the L\'{e}vy-like quantum-mechanical paths allows to develop the generalization of the quantum mechanics.

The fractional Laplacian  $(-\Delta)^s$ with $s\in (0,1)$ of a function $f:\mathbb{R}^N\rightarrow \mathbb{R}$ is expressed by the formula
\begin{equation*}
(-\Delta)^sf(x)=C_{N,s}P.V.\int_{\mathbb{R}^N}\frac{f(x)-f(z)}{|x-z|^{N+2s}}dz,
\end{equation*}
P.V. stands for the Cauchy principal value, and $C_{N,s}$ is a normalization constant.

It can also be defined as a pseudo-differential operator
\begin{equation*}
\mathscr{F}((-\Delta)^sf)(\xi)=|\xi|^{2s}\mathscr{F}(f)(\xi)=|\xi|^{2s}\hat{f}(\xi),
\end{equation*}
where $\mathscr{F}$ is the Fourier transform. For more details about the fractional Laplacian we refer to \cite{CS07,FL2013,FLS16,RS14,S07} and the references therein. The nature function space associated with $(-\Delta)^s$ in $N$ dimension is
\begin{equation*}
H^s(\mathbb{R}^N):=\bigg\{u~\big| \int_{\mathbb{R}^{2N}}\frac{|u(x)-u(z)|^2}{|x-z|^{N+2s}}dxdz<+\infty~\text{and } \int_{\mathbb{R}^N}|u(x)|^2dx<+\infty\bigg\},
\end{equation*}
equipped with norm
\begin{equation*}
\|u\|_{H^s(\mathbb{R}^N)}=\left(\int_{\mathbb{R}^N}|(-\Delta)^\frac{s}{2}u|^2dx+\int_{\mathbb{R}^N}u^2dx\right)^{\frac12},
\end{equation*}
where, by Fourier transform
\begin{align*}
\int_{\mathbb{R}^N}|(-\Delta)^\frac{s}{2}u|^2dx&=\int_{\mathbb{R}^N}|\xi|^{2s}\hat{u}(\xi)^2d\xi=\int_{\mathbb{R}^N}(-\Delta)^s u\cdot udx\\
&=C_{N,s}\int_{\mathbb{R}^N}\int_{\mathbb{R}^N}\frac{(u(x)-u(z))u(x)}{|x-z|^{N+2s}}dzdx\\
&=\frac{C_{N,s}}{2}\int_{\mathbb{R}^{2N}}\frac{|u(x)-u(z)|^2}{|x-z|^{N+2s}}dzdx.
\end{align*}

The energy functional associated with \eqref{GPE} is
\begin{align}\label{E-GPE}
E(u,v)
=\frac12\int_{\mathbb{R}^N}(|(-\Delta)^\frac{s}{2}u|^2+|(-\Delta)^\frac{s}{2}v|^2)dx
-\frac{1}{2p}\int_{\mathbb{R}^N}(\mu_1|u|^{2p}+2\beta |u|^p|v|^p+\mu_2|v|^{2p})dx
\end{align}
on the constraint $H_{a_1}\times{H_{a_2}}$, where for $0<a\in\mathbb{R}$, we define
\begin{align*}
H_a:=\{u\in H^s(\mathbb{R}^N):\int_{\mathbb{R}^N}u^2dx=a^2\}.
\end{align*}

We prove the existence of normalized solution for different ranges of the coupling parameter $\beta>0$. Our main theorems are as follows:

\begin{theorem}\label{t1.1}
Assume $0<s<1$, $2s<N\le 4s$ and $1+\frac{2s}{N}<p<\frac{N}{N-2s}$. Let $a_1,~a_2,~\mu_1~\text{and}~\mu_2>0$ be fixed, and let $\beta_1>0$ be defined by
\begin{align}\label{3.1}
\begin{split}
&\max\Bigg\{\frac{1}{a_1^\frac{4ps-2(p-1)N}{(p-1)N-2s}\mu_1^\frac{2s}{(p-1)N-2s}},\frac{1}{a_2^\frac{4ps-2(p-1)N}{(p-1)N-2s}\mu_2^\frac{2s}{(p-1)N-2s}}\Bigg\}\\
\quad&=\frac{1}{a_1^\frac{4ps-2(p-1)N}{(p-1)N-2s}(\mu_1+\beta_1)^\frac{2s}{(p-1)N-2s}}+\frac{1}{a_2^\frac{4ps-2(p-1)N}{(p-1)N-2s}(\mu_2+\beta_1)^\frac{2s}{(p-1)N-2s}}.
\end{split}
\end{align}
If $0<\beta<\beta_1$, then \eqref{GPE} has a solution $(\tilde{\lambda}_1,\tilde{\lambda}_2,\tilde{u},\tilde{v})$ with $(\tilde{u},\tilde{v})$ on the constraint $H_{a_1}\times{H_{a_2}}$, such that $\tilde{\lambda}_1,\tilde{\lambda}_2>0$ and $\tilde{u}$ and $\tilde{v}$ are both positive and radial.
\end{theorem}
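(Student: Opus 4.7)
The plan is to use a Pohozaev-manifold constrained minimization on radial Sobolev functions, in the spirit of Soave / Bartsch--Jeanjean adapted to the fractional setting. Using the $L^2$-preserving scaling $(t\star w)(x):=t^{N/2}w(tx)$, the fiber map $\phi_{u,v}(t):=E(t\star u,t\star v)=\tfrac{t^{2s}}{2}\mathcal K(u,v)-\tfrac{t^{(p-1)N}}{2p}\mathcal N(u,v)$, where $\mathcal K=\|(-\Delta)^{s/2}u\|_2^2+\|(-\Delta)^{s/2}v\|_2^2$ and $\mathcal N=\int(\mu_1|u|^{2p}+2\beta|u|^p|v|^p+\mu_2|v|^{2p})$, has, by the mass-supercritical condition $(p-1)N>2s$, a unique strict positive maximizer $t_{u,v}$ characterized by the Pohozaev identity $P(u,v):=2s\mathcal K-\tfrac{(p-1)N}{p}\mathcal N=0$. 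Thus $\mathcal P:=\{(u,v)\in H_{a_1}\times H_{a_2}:P(u,v)=0\}$ is a natural $C^1$ constraint containing every critical point of $E|_{H_{a_1}\times H_{a_2}}$, on which $E|_{\mathcal P}=\tfrac{(p-1)N-2s}{2(p-1)N}\mathcal K$ is positive and $\dot H^s$-coercive.

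Next I would minimize $m_\beta:=\inf_{(u,v)\in\mathcal P\cap\mathcal H}E(u,v)$ on the radial class $\mathcal H:=H^s_{\mathrm{rad}}(\mathbb R^N)\times H^s_{\mathrm{rad}}(\mathbb R^N)$, using that $H^s_{\mathrm{rad}}\hookrightarrow L^{2p}(\mathbb R^N)$ is compact when $N>2s$ and $2<2p<2N/(N-2s)$. A minimizing sequence is bounded in $\mathcal H$ by the coercivity above, so it converges weakly in $\mathcal H$ and strongly in $L^{2p}$ along a subsequence to a limit $(\tilde u,\tilde v)$. Since $\mathcal P$ is a natural constraint, $(\tilde u,\tilde v)$ solves the Euler--Lagrange system with multipliers $\tilde\lambda_1,\tilde\lambda_2$; testing those equations with $\tilde u,\tilde v$ and subtracting $P(\tilde u,\tilde v)=0$ yields $\tilde\lambda_i>0$. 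Radial positivity of the final $(\tilde u,\tilde v)$ is then obtained by replacing the components with their symmetric decreasing rearrangements---which preserve $L^2$-masses, decrease $\dot H^s$-seminorms by the fractional P\'olya--Szeg\H{o} inequality, and non-decrease $\int|u|^p|v|^p$ by the rearrangement inequality---and invoking the strong maximum principle for $(-\Delta)^s$.

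The decisive input, and the only place where the threshold $\beta_1$ from \eqref{3.1} is used, is the strict energy comparison needed to rule out mass loss or a semitrivial dichotomy of the minimizing sequence. The analogous single-equation problem $(-\Delta)^su+\lambda u=\mu|u|^{2p-2}u$ on $H_a$, by the fractional Gagliardo--Nirenberg inequality together with the $E|_{\mathcal P}$ identity above, has explicit Pohozaev ground-state value $m^\mu(a)=K_{N,p,s}\,a^{-\frac{4ps-2(p-1)N}{(p-1)N-2s}}\mu^{-\frac{2s}{(p-1)N-2s}}$. Under this identification, \eqref{3.1} is exactly $\max\{m^{\mu_1}(a_1),m^{\mu_2}(a_2)\}=m^{\mu_1+\beta_1}(a_1)+m^{\mu_2+\beta_1}(a_2)$, and since $\mu\mapsto m^\mu(a)$ is strictly decreasing, $0<\beta<\beta_1$ is equivalent to a strict ordering of single-equation levels. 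Combining $2\beta|u|^p|v|^p\le\beta(|u|^{2p}+|v|^{2p})$ with a carefully rescaled trial pair built from the two single-equation ground states, I would bracket $m_\beta$ strictly below the asymptotic dichotomy threshold, forcing both mass constraints to be saturated in the limit and completing the proof.

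The principal obstacle will be this strict energy comparison: the threshold $\beta_1$ is fine-tuned so that the comparison only just works, and handling the cross term $2\beta|u|^p|v|^p$ against the scaling identities and the sharp Gagliardo--Nirenberg constants requires precise test functions and careful accounting of the exponents in \eqref{3.1}. The remaining ingredients---the fiber-map analysis, radial compactness, P\'olya--Szeg\H{o}, and the strong maximum principle---are standard fractional analogues of well-established tools.
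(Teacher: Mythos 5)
Your proposal rests on minimizing $E$ on the Pohozaev manifold, and this is the wrong variational skeleton for the regime $0<\beta<\beta_1$; it is essentially the scheme that proves Theorem \ref{t1.2} (large $\beta$), not Theorem \ref{t1.1}. The decisive step you describe --- using \eqref{3.1} to ``bracket $m_\beta$ strictly below the asymptotic dichotomy threshold'' --- cannot work, because \eqref{3.1} with $\beta<\beta_1$ gives an inequality in the \emph{opposite} direction: it says $I_{\mu_1+\beta}(w_{a_1,\mu_1+\beta})+I_{\mu_2+\beta}(w_{a_2,\mu_2+\beta})>\max\{I_{\mu_1}(w_{a_1,\mu_1}),I_{\mu_2}(w_{a_2,\mu_2})\}$, i.e.\ a \emph{lower} bound on the energy over the product set $\mathcal P(a_1,\mu_1+\beta)\times\mathcal P(a_2,\mu_2+\beta)$ (Lemma \ref{l3.2}), not an upper bound on the Pohozaev-manifold infimum. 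In fact the Pohozaev infimum can never beat the semitrivial levels for any $\beta>0$: taking $(u,l\star v)$ with $l\to-\infty$ flattens the second component (mass preserved, kinetic, $L^{2p}$ and coupling terms tend to $0$), so the fiber-maximum/Rayleigh characterization gives $\inf_F E\le\min\{I_{\mu_1}(w_{a_1,\mu_1}),I_{\mu_2}(w_{a_2,\mu_2})\}$. A strict inequality below this dichotomy threshold is exactly what the large-$\beta$ condition \eqref{eq1.2} provides via a genuinely coupled test pair, and is unavailable for small $\beta$; your plan therefore has no mechanism to prevent a minimizing sequence from decoupling, losing one mass constraint in the limit (equivalently, producing a nonpositive multiplier). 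The paper instead builds, for $0<\beta<\beta_1$, a two-parameter linking: the boundary surface $\gamma_0(t_1,t_2)=(t_1\star w_1,t_2\star w_2)$ has energy at most $\max\{I_{\mu_1}(w_{a_1,\mu_1}),I_{\mu_2}(w_{a_2,\mu_2})\}+\varepsilon$ on $\partial Q$, a degree argument (Lemma \ref{l3.5}) shows every admissible surface crosses $\mathcal P(a_1,\mu_1+\beta)\times\mathcal P(a_2,\mu_2+\beta)$, and \eqref{3.1} then pins the minimax level $c$ \emph{strictly above} both semitrivial levels; the solution obtained lives at this level $c>\inf_F E$, so it is not a Pohozaev-manifold ground state at all.

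A second genuine gap: your claim that testing the limit equations and subtracting the Pohozaev identity ``yields $\tilde\lambda_i>0$'' is false as stated; that computation only gives $\tilde\lambda_1a_1^2+\tilde\lambda_2a_2^2>0$ (Lemma \ref{l3.8}). Positivity of \emph{each} multiplier is precisely the delicate point, and the paper obtains it by contradiction: if, say, $\tilde\lambda_2\le0$, the fractional Liouville-type Lemma \ref{l2.3} (this is where $2s<N\le4s$, i.e.\ $2\le\frac{N}{N-2s}$, is used) forces $\tilde v\equiv0$, hence $\tilde u=w_{a_1,\mu_1}$ and, via $G(u_n,v_n)\to0$ and the $L^{2p}$ compactness, $c=I_{\mu_1}(w_{a_1,\mu_1})$, contradicting $c>\max\{I_{\mu_1}(w_{a_1,\mu_1}),I_{\mu_2}(w_{a_2,\mu_2})\}$. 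Without a level strictly above the semitrivial ones --- which your minimization cannot supply for small $\beta$ --- this exclusion step has no substitute. The remaining ingredients you list (fiber maps, radial compactness, bounded Palais--Smale sequence with $G(u_n,v_n)=o(1)$ via the augmented functional, rearrangement and maximum principle) are consistent with the paper's toolkit, but they do not repair the choice of minimax principle or the multiplier argument.
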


For the next result, we introduce a Pohozaev-type constraint as follows:
\begin{align}\label{eq1.3}
{F}:=\{(u,v)\in{H_{a_1}}\times{H_{a_2}}:G(u,v)=0\},
\end{align}
where
\begin{align}
G(u,v)
=\int_{\mathbb{R}^N}(|(-\Delta)^\frac{s}{2}u|^2+|(-\Delta)^\frac{s}{2}v|^2)dx
-\frac{(p-1)N}{2ps}\int_{\mathbb{R}^N}(\mu_1|u|^{2p}+2\beta |u|^p|v|^p+\mu_2|v|^{2p})dx.
\end{align}

We define a Rayleigh-type quotient as
\begin{equation}\label{eq1.4}
\mathcal{R}(u,v):=\frac{R_0\bigg(\int_{\mathbb{R}^N}(|(-\Delta)^\frac{s}{2} u|^2+|(-\Delta)^\frac{s}{2}v|^2)dx\bigg)^\frac{(p-1)N}{(p-1)N-2s}}{\bigg(\int_{\mathbb{R}^N}(\mu_1|u|^{2p}+2\beta|u|^p|v|^p+\mu_2|v|^{2p})dx\bigg)^\frac{2s}{(p-1)N-2s}},
\end{equation}where

\begin{equation}\label{eq1.1}
R_0=\frac{(p-1)N-s}{2(p-1)N}\Bigg(\frac{2ps}{(p-1)N}\Bigg)^\frac{2s}{(p-1)N-2s}.
\end{equation}

\begin{theorem}\label{t1.2}
Assume $0<s<1$, $2s<N\le 4s$ and $1+\frac{2s}{N}<p<\frac{N}{N-2s}$. Let $a_1,a_2,\mu_1~\text{and}~\mu_2>0$ be fixed, and let $\beta_2>0$ be defined by
\begin{align}\label{eq1.2}
\begin{split}
&\min\Bigg\{\frac{1}{a_1^\frac{4ps-2(p-1)N}{(p-1)N-2s}\mu_1^\frac{2s}{(p-1)N-2s}},\frac{1}{a_2^\frac{4ps-2(p-1)N}{(p-1)N-2s}\mu_2^\frac{2s}{(p-1)N-2s}}\Bigg\}\\
\quad&=\frac{(a_1^2+a_2^2)^\frac{(p-1)N}{(p-1)N-2s}}{(\mu_1a_1^{2p}+2\beta _2a_1^pa_2^p+\mu_2a_2^{2p})^\frac{2s}{(p-1)N-2s}}.
\end{split}
\end{align}

If $\beta>\beta_2$, then \eqref{GPE} has a solution $(\bar \lambda_1,\bar \lambda_2,\bar u,\bar v)$ with $(\bar u,\bar v)$ on the constraint $H_{a_1}\times{H_{a_2}}$, such that $\bar \lambda_1,\bar \lambda_2>0$ and $\bar u$ and $\bar v$ are both positive and radial. Moreover,  $(\bar \lambda_1,\bar \lambda_2,\bar u,\bar v)$ is a solution in the sense that
\begin{align*}
E(\bar u,\bar v)&=\inf\{E(u,v):(u,v)\in F\}=\inf_{(u,v)\in{H_{a_1}}\times{H_{a_2}}}\mathcal{R}(u,v)
\end{align*}
holds.
\end{theorem}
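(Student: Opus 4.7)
The strategy is to minimize $E$ on the Pohozaev-type manifold $F\cap(H_{a_1}\times H_{a_2})$, exploiting the scale invariance of $\mathcal{R}$ under the mass-preserving dilation $u_t(x)=t^{N/2}u(tx)$. With the shorthand $A(u,v)=\int(|(-\Delta)^{s/2}u|^2+|(-\Delta)^{s/2}v|^2)dx$ and $B(u,v)=\int(\mu_1|u|^{2p}+2\beta|u|^p|v|^p+\mu_2|v|^{2p})dx$, the fiber map
\begin{equation*}
t\mapsto E(u_t,v_t)=\tfrac{t^{2s}}{2}A-\tfrac{t^{(p-1)N}}{2p}B
\end{equation*}
has a unique strict maximum at the positive root of $G(u_t,v_t)=0$, namely $(t^*)^{(p-1)N-2s}=2psA/((p-1)NB)$. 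A direct computation, using $E=\frac{(p-1)N-2s}{4ps}B$ on $F$, yields $E(u_{t^*},v_{t^*})=\mathcal{R}(u,v)$, the constant $R_0$ being chosen exactly so that this equality holds. Hence
\begin{equation*}
\inf_{F}E\;=\;\inf_{(u,v)\in H_{a_1}\times H_{a_2}}\mathcal{R}(u,v),
\end{equation*}
which is the second of the two equalities in the statement.

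Now take a minimizing sequence $\{(u_n,v_n)\}\subset F\cap(H_{a_1}\times H_{a_2})$ for $E|_F$. Since $\beta>0$, simultaneous Schwarz symmetrization only decreases $A$ (fractional P\'olya--Szeg\H{o}) and only increases $B$ (Riesz rearrangement applied to the cross term $\int|u|^p|v|^p dx$); after rescaling back onto $F$ via some $t^*\le 1$, one may replace each pair by a nonnegative, radially symmetric decreasing one with no larger energy. The relation $E=\frac{(p-1)N-2s}{4ps}B$ on $F$ implies that $\{(u_n,v_n)\}$ is uniformly bounded in $H^s_{\mathrm{rad}}(\mathbb{R}^N)\times H^s_{\mathrm{rad}}(\mathbb{R}^N)$; extracting a weakly convergent subsequence $(u_n,v_n)\rightharpoonup(\bar u,\bar v)$ and invoking the compact embedding $H^s_{\mathrm{rad}}(\mathbb{R}^N)\hookrightarrow L^{2p}(\mathbb{R}^N)$ (valid in the range $2<2p<2N/(N-2s)$), one upgrades to strong convergence in $L^{2p}$, so $B(u_n,v_n)\to B(\bar u,\bar v)$.

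The decisive step is to exclude the semi-trivial limits $\bar u\equiv 0$ and $\bar v\equiv 0$ and to prevent loss of $L^2$-mass at infinity; this is where the hypothesis $\beta>\beta_2$ is used. Using the fractional Gagliardo--Nirenberg inequality and its extremal $Q$, a direct calculation gives
\begin{equation*}
\inf_{w\in H_{a_i}}\mathcal{R}(w,0)=\frac{K}{a_i^{\alpha}\mu_i^{\gamma}},\qquad \mathcal{R}\Bigl(\tfrac{a_1 Q}{\|Q\|_2},\tfrac{a_2 Q}{\|Q\|_2}\Bigr)=K\cdot\frac{(a_1^2+a_2^2)^{\gamma_1}}{(\mu_1a_1^{2p}+2\beta a_1^p a_2^p+\mu_2a_2^{2p})^{\gamma}},
\end{equation*}
with $\alpha=(4ps-2(p-1)N)/((p-1)N-2s)$, $\gamma=2s/((p-1)N-2s)$, $\gamma_1=(p-1)N/((p-1)N-2s)$, and a universal positive constant $K$. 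The relation defining $\beta_2$ thus asserts exactly that $\min_i K/(a_i^\alpha\mu_i^\gamma)$ equals the right-hand side at $\beta=\beta_2$; since the right-hand side is strictly decreasing in $\beta$, the condition $\beta>\beta_2$ gives
\begin{equation*}
\inf_{H_{a_1}\times H_{a_2}}\mathcal{R}\;<\;\min\bigl\{\inf_{w\in H_{a_1}}\mathcal{R}(w,0),\;\inf_{w\in H_{a_2}}\mathcal{R}(0,w)\bigr\}.
\end{equation*}
Combined with a Brezis--Lieb splitting applied on $F$, this strict gap rules out mass dichotomy and forces $\bar u\not\equiv 0$, $\bar v\not\equiv 0$; hence strong $H^s$ convergence holds and $(\bar u,\bar v)\in F\cap(H_{a_1}\times H_{a_2})$ is a minimizer.

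At such a minimizer one has $E'(\bar u,\bar v)=\bar\lambda_1(\bar u,0)+\bar\lambda_2(0,\bar v)+\nu\,G'(\bar u,\bar v)$ for some $\bar\lambda_1,\bar\lambda_2,\nu\in\mathbb{R}$; the strict concavity of the fiber map (step one) makes $t\mapsto G(\bar u_t,\bar v_t)$ cross zero transversally at $t=1$, forcing $\nu=0$, so $F$ is a natural constraint and $(\bar u,\bar v)$ solves \eqref{GPE}. The signs $\bar\lambda_1,\bar\lambda_2>0$ then follow by testing each scalar equation against its own component, using the Pohozaev relation $G(\bar u,\bar v)=0$ to eliminate the $L^{2p}$-term, and invoking the Sobolev-subcritical inequality $(p-1)N<2ps$. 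Pointwise positivity (and, together with radial monotonicity, radial symmetry) of $\bar u$ and $\bar v$ is immediate from the strong maximum principle for $(-\Delta)^s$. I expect the principal obstacles to be the identification of $\beta_2$ as the precise fractional Gagliardo--Nirenberg threshold, which justifies the strict inequality displayed above, and the Brezis--Lieb/no-dichotomy analysis, where the reduction to radial functions is crucial for compactness in this $L^2$-supercritical regime.
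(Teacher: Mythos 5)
Your overall route---direct minimization of $E$ on the Pohozaev set $F$, symmetrization, and the identification $\inf_F E=\inf\mathcal{R}$ via the fiber maps---is genuinely different from the paper, which runs a mountain-pass/min--max argument with Jeanjean's augmented functional to produce a bounded Palais--Smale sequence satisfying $G(u_n,v_n)\to0$, and only afterwards identifies the level with $\inf_F E=\inf\mathcal R$ (Lemmas \ref{l3.15}--\ref{l3.18}). Your computation of the $\beta_2$-threshold as a strict gap between the coupled Rayleigh quotient and the two semitrivial ones is correct and is exactly the Rayleigh-quotient form of Lemma \ref{l3.14}. However, there is a genuine gap at the Lagrange-multiplier step. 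Testing each equation with its own component and summing, then using $G(\bar u,\bar v)=0$, gives only
$\bar\lambda_1a_1^2+\bar\lambda_2a_2^2=\bigl(1-\tfrac{(p-1)N}{2ps}\bigr)\int_{\mathbb{R}^N}(\mu_1\bar u^{2p}+2\beta\bar u^p\bar v^p+\mu_2\bar v^{2p})dx>0$,
i.e. \emph{at least one} multiplier is positive (this is Lemma \ref{l3.8}); the Pohozaev relation is a single identity for the whole system and cannot be used equation-by-equation to eliminate the $L^{2p}$ and coupling terms, so positivity of \emph{both} $\bar\lambda_1,\bar\lambda_2$ does not follow from the argument you describe. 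This is precisely the delicate point in the paper: if, say, $\bar\lambda_2\le0$, then $\bar v\ge0$ satisfies $(-\Delta)^s\bar v\ge0$ and the Liouville-type Lemma \ref{l2.3}---the only place where the hypothesis $2s<N\le4s$, i.e. $2\le\frac{N}{N-2s}$, is used---forces $\bar v\equiv0$, after which the level comparison coming from $\beta>\beta_2$ yields the contradiction. Your proposal never invokes the Liouville lemma and never uses $N\le4s$, which is the telltale sign that this step is missing; in your framework it could be repaired by combining nontriviality of both components with Lemma \ref{l2.3}, but as written it is a gap.

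The compactness analysis is also asserted rather than proved. With only weak $H^s$ convergence and the compact radial embedding into $L^{2p}$, the limit of your minimizing sequence may satisfy $\|\bar u\|_2<a_1$, $\|\bar v\|_2<a_2$ and $G(\bar u,\bar v)<0$; the strict gap you establish excludes semitrivial limits, but ruling out partial loss of $L^2$-mass requires a rescaling argument together with strict monotonicity (or subadditivity) of the constrained infimum in $(a_1,a_2)$, none of which is supplied by the phrase ``Brezis--Lieb splitting applied on $F$.'' Moreover, strong $H^s$ convergence cannot be claimed before either that monotonicity argument or the positivity of the multipliers is in hand (in the paper it is deduced from $\lambda_i>0$ via Lemma \ref{l3.9}), so as it stands the existence of a minimizer on $F\cap(H_{a_1}\times H_{a_2})$, and hence the whole construction of $(\bar u,\bar v)$, is not yet justified.
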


In the system \eqref{GPE} with prescribed $L^2$ constraint, the problem appears to be more complicated as the Lagrange multipliers $\lambda_i$ are also need to be determined simultaneously. The exponent $2p\in (2+\frac{4s}{N},\frac{2N}{N-2s})$ brings another difficulty as it is $L^2$-supercritical and $E(u,v)$ is unbounded from below on the $L^2$ constraint. To overcome these difficulties, the idea introduced by L. Jeanjean in \cite{J97,BJN18} can be adopted to our system: A minimax argument can be applied to $E$, allowing one to construct a Palais-Smale sequence on the constraint satisfying the Pohozaev identity in limit sense. This leads to the boundedness of Palais-Smale sequence. Some a priori estimates on $\lambda_i$ and a Liouville-type result for fractional Laplacian (Lemma \ref{l2.3}) ensure $H^s$-convergence of the Palais-Smale sequence.

We don't know if the results are still true in high dimensions. Since $u\in H^s(\mathbb R^N)$, when the Liouville-type result is applied, we require that $2\le \frac{N}{N-2s}$ to get our results. It should be interesting to consider the problem in high dimension, even in the Laplacian case.

The paper is organized as follows. In Section \ref{sec2}, we introduce some important lemmas. In Section \ref{sec3}, we prove Theorem \ref{t1.1} and in Section \ref{sec4}, the proof of Theorem \ref{t1.2} is given.

\section{Preliminaries}\label{sec2}
In this section, we will show some facts about the fractional NLS equation, which are used later. First, we need the important fractional Gagliardo-Nirenberg-Sobolev inequality:
\begin{equation}\label{Gagliardo-Nirenberg-Sobolev inequality}
\int_{\mathbb{R}^N}|u|^{\alpha+2}dx
\leqslant C_{opt}\big(\int_{\mathbb{R}^N}|(-\Delta)^{\frac{s}{2}}u|^2dx\big)^\frac{N\alpha}{4s}\big(\int_{\mathbb{R}^N}|u|^2dx\big)^{\frac{\alpha(2s-N)}{4s}+1}.
\end{equation}
Here $\alpha>0$ and $C_{opt}>0$ denotes the optimal constant depending only on $\alpha$, $N$ and $s$.

It is well known that when $N>2s$,
\begin{equation}\label{embedding}
  H^s(\mathbb{R}^N)\hookrightarrow L^p(\mathbb{R}^N),~\text{for all}~ 2\leqslant p\leqslant \frac{2N}{N-2s}.
\end{equation}

Consider the general fractional Laplacian equation
\begin{equation}\label{2.1}
(-\Delta)^s u=f(u)\quad \text{in}~\mathbb{R}^N
\end{equation}
with $f\in C^2(\mathbb{R})$. Assume that $u\in{H}^s(\mathbb{R}^N)\cap L^\infty(\mathbb{R}^N)$ is a solution to \eqref{2.1}, the Pohozaev identity for \eqref{2.1} is proved in \cite{BM17}.
\begin{theorem}\label{t2.1}\cite{BM17}
Let $u\in{H}^s(\mathbb{R}^N)\cap L^\infty(\mathbb{R}^N)$ be a solution to \eqref{2.1} and $F(u)\in L^1(\mathbb{R}^N)$. Then,
\begin{equation}\label{Pohozaev indentity}
(N-2s)\int_{\mathbb{R}^N} uf(u)dx=2N\int_{\mathbb{R}^N}F(u)dx,
\end{equation}
where $F(u)=\int^u_0f(t)dt$.
\end{theorem}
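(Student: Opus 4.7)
The cleanest route for this fractional Pohozaev identity is via the Caffarelli--Silvestre extension, which is the strategy I would pursue. I would write the $s$-harmonic extension of $u$ as the function $U=U(x,y)$ on $\mathbb{R}^{N+1}_+$ satisfying
\begin{equation*}
\mathrm{div}(y^{1-2s}\nabla U)=0 \text{ in } \mathbb{R}^{N+1}_+, \qquad U(\cdot,0)=u, \qquad -\kappa_s\lim_{y\to 0^+} y^{1-2s}\partial_y U = f(u),
\end{equation*}
with $\kappa_s$ the standard normalization constant. Since $(-\Delta)^s$ is now realized as a Dirichlet-to-Neumann map of a local (degenerate) elliptic problem, all subsequent manipulations reduce to integration by parts in one extra dimension, and the usual Pohozaev technology becomes available.

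Next I would run the classical Pohozaev scheme on $U$: multiply the equation by the radial vector field $Z\cdot\nabla U$ with $Z=(x,y)$ and integrate over the half-ball $B_R^+$. The bulk term can be rewritten as a divergence after one integration by parts, producing three kinds of boundary contributions: a term on the lateral cap $\partial B_R\cap\mathbb{R}^{N+1}_+$, a term on the flat boundary $\{y=0\}$ which, using the Neumann data, equals $\int_{\mathbb{R}^N}(x\cdot\nabla u)\,f(u)\,dx$, and an interior term arising from commuting the weight $y^{1-2s}$ against $\partial_y$, which is ultimately responsible for the $(N-2s)$ prefactor. Writing $(x\cdot\nabla u)\,f(u)=x\cdot\nabla F(u)$ and integrating by parts once more in $x$ converts that boundary contribution into $-N\int_{\mathbb{R}^N}F(u)\,dx$; combining everything and cancelling the bulk Dirichlet integral of $U$ yields exactly $(N-2s)\int uf(u)\,dx = 2N\int F(u)\,dx$.

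The main obstacle, and the reason the hypotheses $u\in L^\infty$ and $F(u)\in L^1$ are imposed, is the justification of sending $R\to\infty$: one must control the flux on $\partial B_R^+$ along a suitable sequence $R_n\to\infty$. This requires decay and regularity for both $U$ and the weighted gradient $y^{1-2s}\nabla U$. Starting from $u\in H^s\cap L^\infty$ with $f\in C^2$, I would bootstrap to obtain $u\in C^{1,\alpha}_{\mathrm{loc}}$ and quantitative decay at infinity, then transfer this to $U$ via weighted elliptic estimates, and finally apply an averaging argument in $R$ to extract a subsequence along which the spherical caps contribute negligibly. This weighted-boundary analysis, rather than the algebraic Pohozaev calculation, is the technical heart of the argument and the place where the hypothesis $F(u)\in L^1$ is genuinely used to close the integration by parts on the flat boundary.
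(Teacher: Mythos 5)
The paper offers no proof of this statement at all: it is quoted directly from \cite{BM17}, so there is no internal argument to compare against. Your Caffarelli--Silvestre extension strategy is the standard route to this whole-space fractional Pohozaev identity (and essentially the one used in the cited literature): the multiplier $Z\cdot\nabla U$ with $Z=(x,y)$ on half-balls $B_R^+$, the flat-boundary contribution $\int_{\mathbb{R}^N}(x\cdot\nabla u)f(u)\,dx=-N\int_{\mathbb{R}^N}F(u)\,dx$ (using $F(u)\in L^1$), and the commutation with the weight $y^{1-2s}$ producing the interior term $\tfrac{N-2s}{2}\int y^{1-2s}|\nabla U|^2$ are all correct, and you correctly identify the passage $R\to\infty$ (decay of $U$ and of $y^{1-2s}\nabla U$, averaging in $R$ to kill the spherical caps) as the real technical work. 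One step you leave implicit should be made explicit: to ``cancel the bulk Dirichlet integral'' you need the companion identity obtained by testing the extension equation with $U$ itself, namely
\begin{equation*}
\kappa_s^{-1}\int_{\mathbb{R}^{N+1}_+}y^{1-2s}|\nabla U|^2\,dx\,dy=\int_{\mathbb{R}^N}u f(u)\,dx,
\end{equation*}
whose own boundary terms must be justified by the same truncation device; combining it with the multiplier identity is what turns $\tfrac{N-2s}{2}\int y^{1-2s}|\nabla U|^2$ into $\tfrac{N-2s}{2}\int u f(u)\,dx$ and yields \eqref{Pohozaev indentity}. With that addition your outline is the standard complete proof, modulo the regularity and decay estimates you already flag.
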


Let us consider the scalar problem
\begin{equation}\label{2.2}
\left\{
\begin{aligned}
&(-\Delta)^s w+w=|w|^{2p-2}w\qquad\quad\text{in}~{\mathbb{R}^N},\\
&w>0\qquad\qquad\qquad\qquad\qquad\quad\text{in}~{\mathbb{R}^N},\\
&w(0)=\max_{x\in\mathbb{R}^N} w\quad \text{and} \quad w\in H^s(\mathbb{R}^N).\\
\end{aligned}
\right.
\end{equation}
It is shown in \cite{FLS16} that there is a unique positive radial solution $w_0\in H^s(\mathbb{R}^N)\cap L^{\infty}(\mathbb{R}^N) $ to \eqref{2.2} for $1<p<\frac{N}{N-2s}$ and $N>2s$, see Proposition 3.1 in \cite{FLS16}.

We set
\begin{align}\label{2.4}
C_0:=\int_{\mathbb{R}^N} w_0^2dx\quad \text{and} \quad C_1:=\int_{\mathbb{R}^N} w_0^{2p}dx.
\end{align}

By the Pohozaev identity for \eqref{2.2}, we can get
\begin{equation}\label{2.3}
\int_{\mathbb{R}^N}|(-\Delta)^\frac{s}{2}w_0|^2dx=\frac{(p-1)N}{2ps}\int_{\mathbb{R}^N}w_0^{2p}dx=\frac{(p-1)NC_1}{2ps}.
\end{equation}

\begin{remark}
For the constant $C_{opt}$ in Gagliardo-Nirenberg-Sobolev inequality \eqref{Gagliardo-Nirenberg-Sobolev inequality} with $\alpha=2p-2$, it can be evaluated by $w_0$
\begin{align*}
\frac{1}{C_{opt}}
&=\inf_{u\in H^s\setminus\{0\}}
\frac{\big(\int_{\mathbb{R}^N}|(-\Delta)^{\frac{s}{2}}u|^2dx\big)^\frac{(p-1)N}{2s}\big(\int_{\mathbb{R}^N}|u|^2dx\big)^{\frac{2ps-(p-1)N}{2s}}}{\int_{\mathbb{R}^N}|u|^{2p}dx}\nonumber\\
&=\frac{\big(\int_{\mathbb{R}^N}|(-\Delta)^{\frac{s}{2}}w_0|^2dx\big)^\frac{(p-1)N}{2s}C_0^{\frac{2ps-(p-1)N}{2s}}}{C_1}\nonumber\\
&=\frac{\bigg(\frac{(p-1)N}{2ps}C_1\bigg)^\frac{(p-1)N}{2s}C_0^{\frac{2ps-(p-1)N}{2s}}}{C_1}\nonumber\\
&=\frac{C_0^\frac{2ps-(p-1)N}{2s}C_1^{\frac{(p-1)N-2s}{2s}}}{(\frac{2ps}{(p-1)N})^\frac{(p-1)N}{2s}},
\end{align*}
which implies that
\begin{align}\label{the optimal constant}
C_{opt}=\frac{(\frac{2ps}{(p-1)N})^\frac{(p-1)N}{2s}}{C_0^\frac{2ps-(p-1)N}{2s}C_1^\frac{(p-1)N-2s}{2s}}.
\end{align}
\end{remark}

For $a,\mu>0$ fixed, we search for $(\lambda,w)\in \mathbb{R}\times H^s(\mathbb{R}^N)$, with $\lambda>0$ in $\mathbb{R}$, solving
\begin{equation}\label{2.5}
\left\{
\begin{aligned}
&(-\Delta)^s w+\lambda w=\mu |w|^{2p-2}w,\quad\text{in}~{\mathbb{R}^N}\\
&w>0\qquad\qquad\qquad\qquad\qquad\quad\text{in}~{\mathbb{R}^N},\\
&w(0)=\max_{x\in\mathbb{R}^N} w\quad \text{and} \quad \int_{\mathbb{R}^N}w^2dx=a^2.\\
\end{aligned}
\right.
\end{equation}
Solution to \eqref{2.5} can be found as the critical points of $I_\mu:H^s(\mathbb{R}^N)\rightarrow \mathbb{R}$, defined by
\begin{equation}\label{2.6}
I_\mu(w)=\int_{\mathbb{R}^N}\left(\frac12|(-\Delta)^\frac{s}{2}w|^2-\frac{\mu}{2p}|w|^{2p}\right)dx,
\end{equation}
constrained on the $L^2$-sphere $H_a:=\{u\in H^s(\mathbb{R}^N):\int_{\mathbb{R}^N}u^2=a^2\}$, and $\lambda$ appears as the Lagrange multiplier. It is well known that it can be obtained from $w_0$ by scaling.

\begin{lemma}\label{l2.5}
Equation \eqref{2.5} has a unique positive solution $(\lambda_{a,\mu},w_{a,\mu})$ defined by
\begin{equation*}
\lambda_{a,\mu}:=\Bigg[\frac{1}{\mu}(\frac{C_0}{a^2})^{p-1}\Bigg]^\frac{2s}{(p-1)N-2s},\quad
w_{a,\mu}:=\Bigg(\frac{C_0^{2s}}{\mu^N a^{4s}}\Bigg)^\frac{1}{2(p-1)N-4s}w_0\Bigg(\lambda_{a,\mu}^{\frac{1}{2s}}x\Bigg).
\end{equation*}
Furthermore, $w_{a,\mu}$ satisfies
\begin{equation}\label{2.27}
\int_{\mathbb{R}^N}|(-\Delta)^\frac{s}{2} w_{a,\mu}|^2dx=\frac{(p-1)N}{2ps}\frac{C_1C_0^{\frac{2ps-(p-1)N}{(p-1)N-2s}}}{\mu^\frac{2s}{(p-1)N-2s}a^\frac{4ps-2(p-1)N}{(p-1)N-2s}},
\end{equation}
\begin{equation}\label{2.28}
\int_{\mathbb{R}^N}|w_{a,\mu}|^{2p}dx=\frac{C_1C_0^{\frac{2ps-(p-1)N}{(p-1)N-2s}}}{\mu^\frac{(p-1)N}{(p-1)N-2s}a^\frac{4ps-2(p-1)N}{(p-1)N-2s}},
\end{equation}
\begin{equation}\label{2.29}
I_{\mu}(w_{a,\mu})=\frac{(p-1)N-2s}{4ps}\frac{C_1C_0^{\frac{2ps-(p-1)N}{(p-1)N-2s}}}{\mu^\frac{2s}{(p-1)N-2s}a^\frac{4ps-2(p-1)N}{(p-1)N-2s}}.
\end{equation}
\end{lemma}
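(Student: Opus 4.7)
The plan is to reduce the problem to the canonical equation (2.2) via a scaling argument. I would make the ansatz $w(x) = A\, w_0(Bx)$ for positive constants $A,B$ to be determined, where $w_0$ is the unique positive radial ground state of (2.2). Using the scaling law $(-\Delta)^s w(x) = A B^{2s} ((-\Delta)^s w_0)(Bx)$ together with the identity $(-\Delta)^s w_0 = -w_0 + w_0^{2p-1}$, and matching coefficients against (2.5), I would obtain the two conditions
\begin{equation*}
B^{2s} = \lambda, \qquad B^{2s} A^{2-2p} = \mu,
\end{equation*}
so that $B = \lambda^{1/(2s)}$ and $A = (\lambda/\mu)^{1/(2p-2)}$.

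Next I would impose the mass constraint $\int_{\mathbb R^N} w^2\,dx = a^2$. Changing variables gives $A^2 B^{-N} C_0 = a^2$, which combined with the expressions above yields a single scalar equation for $\lambda$:
\begin{equation*}
\lambda^{\frac{(p-1)N-2s}{2s(p-1)}} = \frac{C_0}{a^2\,\mu^{1/(p-1)}}.
\end{equation*}
Since $(p-1)N-2s > 0$ in the range under consideration, this determines $\lambda$ uniquely, and a straightforward algebraic rearrangement reproduces the stated formula for $\lambda_{a,\mu}$. Substituting back into $A$ and checking against $\bigl(C_0^{2s}/(\mu^N a^{4s})\bigr)^{1/(2(p-1)N-4s)}$ yields the stated expression for $w_{a,\mu}$.

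For the uniqueness statement I would argue in the reverse direction: given any positive radial solution $(\lambda, w)$ of (2.5) with mass $a^2$, the inverse rescaling
\begin{equation*}
\tilde w(y) := (\mu/\lambda)^{1/(2p-2)}\, w\bigl(\lambda^{-1/(2s)} y\bigr)
\end{equation*}
is a positive solution of (2.2), hence equals $w_0$ by the uniqueness theorem cited from \cite{FLS16} (Proposition 3.1). Thus $w$ has the claimed form with a scaling parameter uniquely pinned down by the mass constraint.

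Finally, the identities (2.27)--(2.29) are direct computations. The scaling law gives
\begin{equation*}
\int_{\mathbb R^N}|(-\Delta)^{s/2} w_{a,\mu}|^2 dx = A^2 B^{2s-N}\int_{\mathbb R^N}|(-\Delta)^{s/2} w_0|^2 dy, \qquad \int_{\mathbb R^N} w_{a,\mu}^{2p} dx = A^{2p} B^{-N} C_1,
\end{equation*}
and (2.3) lets me replace $\int |(-\Delta)^{s/2} w_0|^2$ by $\frac{(p-1)N C_1}{2ps}$; inserting the explicit $A,B$ produces the powers of $\mu$, $a$, $C_0$, $C_1$ in the stated formulas. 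The expression for $I_\mu(w_{a,\mu})$ then follows by combining (2.27) and (2.28), as the Pohozaev structure makes the two summands proportional, giving the factor $\frac{(p-1)N-2s}{4ps}$. I do not expect any genuine obstacle; the main care point is bookkeeping the exponents so that the algebra matches the specific fractions $\frac{2s}{(p-1)N-2s}$ and $\frac{4ps-2(p-1)N}{(p-1)N-2s}$ appearing in the statement.
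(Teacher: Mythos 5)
Your proposal is correct and follows essentially the same route as the paper: rescale the unique ground state $w_0$ of \eqref{2.2}, fix the scaling parameters via the equation and the mass constraint, invoke the uniqueness result of \cite{FLS16}, and obtain \eqref{2.27}--\eqref{2.29} by the change of variables together with the Pohozaev identity \eqref{2.3}. Your write-up merely makes explicit the matching of the constants $A,B$ and the reverse-rescaling uniqueness step that the paper states as a direct check, so there is nothing to correct.
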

\begin{proof}
 We can directly check that $w_{a,\mu}$ satisfies the equation \eqref{2.5} with $\lambda=\lambda_{a,\mu}$ and $w_{a,\mu}$ is the unique positive radial solution of \eqref{2.5} by \cite{FLS16}.
By direct calculation,
\begin{align*}
\int_{\mathbb{R}^N}|w_{a,\mu}|^{2p}dx=\frac{C_0^{\frac{2ps-(p-1)N}{(p-1)N-2s}}}{\mu^\frac{(p-1)N}{(p-1)N-2s}a^\frac{4ps-2(p-1)N}{(p-1)N-2s}}\int_{\mathbb{R}^N}|w_0|^{2p}dx,
\end{align*}
we get the equality \eqref{2.28}.

\begin{align*}
\int_{\mathbb{R}^N}|(-\Delta)^\frac{s}{2} w_{a,\mu}|^2dx=\frac{C_0^{\frac{2ps-(p-1)N}{(p-1)N-2s}}}{\mu^\frac{2s}{(p-1)N-2s}a^\frac{4ps-2(p-1)N}{(p-1)N-2s}}\int_{\mathbb{R}^N}|(-\Delta)^\frac{s}{2} w_0|^2dx,
\end{align*}
combined with \eqref{2.3}, we get the equality \eqref{2.27}. Combining \eqref{2.27} and \eqref{2.28} together, we obtain the equality \eqref{2.29}.
\end{proof}

Let us introduce the set
\begin{equation}\label{2.7}
\mathcal{P}(a,\mu):=\left\{w\in H_a:\int_{\mathbb{R}^N}|(-\Delta)^\frac{s}{2}w|^2dx=\frac{(p-1)N\mu}{2ps}\int_{\mathbb{R}^N}|w|^{2p}dx\right\}.
\end{equation}

When $1+\frac{2s}{N}<p<\frac{N}{N-2s}$, we have the following lemma.
\begin{lemma}\label{l2.4}
Assume that $1+\frac{2s}{N}<p<\frac{N}{N-2s}$, if $w$ is a solution of \eqref{2.5}, then $w\in \mathcal{P}(a,\mu)$. In addition the positive solution of \eqref{2.5} minimizes $I_\mu$ on $\mathcal{P}(a,\mu)$.
\end{lemma}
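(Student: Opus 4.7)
The plan is to split the proof into two parts corresponding to the two claims.

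For the first assertion, I would apply the Pohozaev identity (Theorem \ref{t2.1}) to the equation \eqref{2.5} with $f(w)=-\lambda w+\mu|w|^{2p-2}w$, whose primitive is $F(w)=-\tfrac{\lambda}{2}w^{2}+\tfrac{\mu}{2p}|w|^{2p}$. This yields
\begin{equation*}
(N-2s)\int_{\mathbb{R}^N}\bigl(-\lambda w^{2}+\mu|w|^{2p}\bigr)dx
=2N\int_{\mathbb{R}^N}\Bigl(-\tfrac{\lambda}{2}w^{2}+\tfrac{\mu}{2p}|w|^{2p}\Bigr)dx,
\end{equation*}
which, after simplification, gives
$2s\lambda\int_{\mathbb{R}^N}w^{2}dx=\tfrac{2ps-(p-1)N}{p}\,\mu\int_{\mathbb{R}^N}|w|^{2p}dx$. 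Combining this with the weak-solution identity $\int|(-\Delta)^{s/2}w|^{2}dx+\lambda\int w^{2}dx=\mu\int|w|^{2p}dx$ (obtained by testing \eqref{2.5} with $w$) to eliminate $\lambda\int w^{2}dx$ immediately yields
\begin{equation*}
\int_{\mathbb{R}^N}|(-\Delta)^{s/2}w|^{2}dx=\frac{(p-1)N\mu}{2ps}\int_{\mathbb{R}^N}|w|^{2p}dx,
\end{equation*}
so $w\in\mathcal{P}(a,\mu)$.

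For the second assertion, I would use the above Pohozaev identification to rewrite $I_\mu$ on $\mathcal{P}(a,\mu)$ as a positive constant times a single quantity. Indeed, for $w\in\mathcal{P}(a,\mu)$,
\begin{equation*}
I_\mu(w)=\frac{1}{2}\int_{\mathbb{R}^N}|(-\Delta)^{s/2}w|^{2}dx-\frac{\mu}{2p}\int_{\mathbb{R}^N}|w|^{2p}dx=\frac{(p-1)N-2s}{2(p-1)N}\int_{\mathbb{R}^N}|(-\Delta)^{s/2}w|^{2}dx,
\end{equation*}
and the leading coefficient is positive thanks to $p>1+\tfrac{2s}{N}$. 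Hence minimizing $I_\mu$ on $\mathcal{P}(a,\mu)$ is equivalent to minimizing the $\dot H^{s}$-seminorm $\int|(-\Delta)^{s/2}w|^{2}dx$ on $\mathcal{P}(a,\mu)$.

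To get a sharp lower bound, I would invoke the fractional Gagliardo–Nirenberg–Sobolev inequality \eqref{Gagliardo-Nirenberg-Sobolev inequality} with $\alpha=2p-2$, using that $\int w^{2}dx=a^{2}$, and substitute the defining relation of $\mathcal{P}(a,\mu)$ into the resulting inequality. Because the exponent $(p-1)N/(2s)$ on $\int|(-\Delta)^{s/2}w|^{2}dx$ exceeds $1$, the inequality reverses into a strict lower bound
\begin{equation*}
\int_{\mathbb{R}^N}|(-\Delta)^{s/2}w|^{2}dx\;\geq\;\Bigl(\tfrac{2ps}{C_{opt}(p-1)N\mu\,a^{(4ps-2(p-1)N)/(2s)}}\Bigr)^{2s/[(p-1)N-2s]},
\end{equation*}
with equality if and only if $w$ achieves equality in GNS. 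By the characterization of optimizers of \eqref{Gagliardo-Nirenberg-Sobolev inequality} via $w_{0}$ (Proposition 3.1 in \cite{FLS16}) together with the mass constraint, the minimizer is uniquely $w_{a,\mu}$ (up to translation), matching the explicit formulas in Lemma \ref{l2.5}. Since $w_{a,\mu}$ is itself a solution of \eqref{2.5} and therefore lies in $\mathcal{P}(a,\mu)$ by the first part, this proves it minimizes $I_\mu$ on $\mathcal{P}(a,\mu)$.

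The main obstacle is making sure the scalings line up correctly: one has to verify that the sign of $(p-1)N-2s$ correctly reverses the GNS inequality on the Pohozaev manifold, and that the unique GNS optimizer with the prescribed $L^2$-mass $a^2$ and the prescribed Pohozaev relation coincides exactly with the explicit $w_{a,\mu}$ given in Lemma \ref{l2.5}; the rest is bookkeeping with the constants $C_{0}$, $C_{1}$ and $C_{opt}$ already recorded in \eqref{2.4} and \eqref{the optimal constant}.
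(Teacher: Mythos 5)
Your proposal is correct and follows essentially the same route as the paper: the first claim via the Pohozaev identity of Theorem \ref{t2.1} combined with the identity obtained by testing \eqref{2.5} with $w$, and the second claim by writing $I_\mu=\frac{(p-1)N-2s}{2(p-1)N}\int_{\mathbb{R}^N}|(-\Delta)^{s/2}w|^2dx$ on $\mathcal{P}(a,\mu)$, bounding this from below through the Gagliardo--Nirenberg--Sobolev inequality \eqref{Gagliardo-Nirenberg-Sobolev inequality} together with the constraint, and noting that equality is attained by $w_{a,\mu}$, which lies in $\mathcal{P}(a,\mu)$ by the first part. The only cosmetic difference is that you keep $\int_{\mathbb{R}^N} wf(w)\,dx$ explicit in the Pohozaev identity and eliminate $\lambda$ afterwards, whereas the paper substitutes $\int_{\mathbb{R}^N}|(-\Delta)^{s/2}w|^2dx$ for it at the outset; the algebra is identical.
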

\begin{proof}
Let $(w,\lambda)\in H_a\times\mathbb{R}$ be a solution of \eqref{2.5}. By Pohozaev identity \eqref{Pohozaev indentity},
\begin{align*}
(N-2s)\int_{\mathbb{R}^N}|(-\Delta)^\frac{s}{2}w|^2dx
&=2N\left(-\frac{\lambda}{2}\int_{\mathbb{R}^N}|w|^2dx+\frac{\mu}{2p}\int_{\mathbb{R}^N}|w|^{2p}dx\right),
\end{align*}
combined with
\begin{equation*}
\int_{\mathbb{R}^N}|(-\Delta)^\frac{s}{2}w|^2dx+\lambda\int_{\mathbb{R}^N}w^2dx=\mu\int_{\mathbb{R}^N}|w|^{2p}dx,
\end{equation*}
we get
\begin{equation}\label{2.10}
\int_{\mathbb{R}^N}|(-\Delta)^\frac{s}{2}w|^2dx=\frac{(p-1)N\mu}{2ps}\int_{\mathbb{R}^N}|w|^{2p}dx,
\end{equation}
thus, $w\in \mathcal{P}(a,\mu)$.

In the following, we prove that the positive solution $w_{a,\mu}$ of \eqref{2.5} minimizes $I_\mu$ on $\mathcal{P}(a,\mu)$. For any $u\in \mathcal{P}(a,\mu)$, by Gagliardo-Nirenberg-Sobolev inequality \eqref{Gagliardo-Nirenberg-Sobolev inequality} and the fact that $\|u\|_{L^2}=a$, we have
\begin{align}\label{2.15}
\int_{\mathbb{R}^N}|u|^{2p}dx\leqslant C_{opt}a^{\frac{2ps-(p-1)N}{s}}\big(\int_{\mathbb{R}^N}|(-\Delta)^{\frac{s}{2}}u|^2dx\big)^\frac{(p-1)N}{2s}.
\end{align}

Together with \eqref{2.7}, we obtain
\begin{equation*}
\big(\int_{\mathbb{R}^N}|(-\Delta)^{\frac{s}{2}}u|^2dx\big)^\frac{(p-1)N-2s}{2s}\geq \frac{2ps}{(p-1)N\mu C_{opt}a^{\frac{2ps-(p-1)N}{s}}}.
\end{equation*}

Therefore, for any $u\in \mathcal{P}(a,\mu)$,
\begin{equation}\label{2.151}
\begin{array}{rl}
I_{\mu}(u)&=\frac{(p-1)N-2s}{2(p-1)N}\int_{\mathbb{R}^N}|(-\Delta)^{\frac{s}{2}}u|^2dx\\
\\
&\geq \frac{(p-1)N-2s}{2(p-1)N}\left(\frac{2ps}{(p-1)N\mu C_{opt}a^{\frac{2ps-(p-1)N}{s}}}\right)^{\frac{2s}{(p-1)-2s}}.
\end{array}
\end{equation}

It is clear that equality in \eqref{2.151} is obtained by $w_{a,\mu}$ due to Pohozaev identity \eqref{Pohozaev indentity} and the fact that $C_{opt}$ is achieved by $w_{a,\mu}$ (see \cite{FLS16}). Therefore
\begin{equation*}
I_{\mu}(w_{a,\mu})=\inf\limits_{u\in \mathcal{P}(a,\mu)} I_\mu(u).
\end{equation*}
\end{proof}

\begin{lemma}\label{l2.7}
For $1+\frac{2s}{N}<p<\frac{N}{N-2s}$, let $u\in H_a$ be arbitrary but fixed. Define $(l\star u)(x):=e^\frac{Nsl}{2}u(e^{sl}x)$, then we have
\begin{itemize}
\item[(i)] $\|(-\Delta)^\frac{s}{2} (l\star u)\|_{L^2}\rightarrow 0$ and $I_{\mu}(l\star u)\rightarrow 0$ as $l\rightarrow -\infty$,
\item[(ii)] $\|(-\Delta)^\frac{s}{2} (l\star u)\|_{L^2}\rightarrow +\infty$ and $I_{\mu}(l\star u)\rightarrow -\infty$ as $l\rightarrow +\infty$,
\item[(iii)] $f_u(l)=I_\mu(l\star u)$ reaches its unique maximum value at $l(u)\in \mathbb{R}$ with $l(u)\star u\in \mathcal{P}(a,\mu)$.
\end{itemize}
\end{lemma}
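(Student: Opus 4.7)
The approach is straightforward and computational: everything reduces to an elementary study of the one-variable function $f_u(l) = I_\mu(l\star u)$, which will turn out to have the form $\alpha e^{\beta l} - \gamma e^{\delta l}$ with $0 < \beta < \delta$. The scaling $(l\star u)(x) = e^{Nsl/2}u(e^{sl}x)$ is designed to preserve the $L^2$-mass, so $l\star u \in H_a$ for every $l$; the key fact is the $s$-homogeneity of $(-\Delta)^{s/2}$ under dilations.

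First I would record the two scaling identities by direct change of variables (using the Gagliardo representation of $\|(-\Delta)^{s/2}\cdot\|_{L^2}^2$ recalled in the introduction):
\begin{equation*}
\|(-\Delta)^{s/2}(l\star u)\|_{L^2}^2 = e^{2s^2 l}\,\|(-\Delta)^{s/2}u\|_{L^2}^2, \qquad \int_{\mathbb{R}^N}|l\star u|^{2p}\,dx = e^{Ns(p-1)l}\int_{\mathbb{R}^N}|u|^{2p}\,dx.
\end{equation*}
Writing $A := \|(-\Delta)^{s/2}u\|_{L^2}^2$ and $B := \int_{\mathbb{R}^N}|u|^{2p}\,dx$, this yields
\begin{equation*}
f_u(l) = \tfrac{1}{2}A\,e^{2s^2 l} - \tfrac{\mu}{2p}B\,e^{Ns(p-1)l}.
\end{equation*}
The mass-supercritical hypothesis $p > 1 + \tfrac{2s}{N}$ is exactly the strict inequality $Ns(p-1) > 2s^2$, which is the ordering of exponents that drives the whole argument.

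Items (i) and (ii) are then immediate. As $l \to -\infty$ both exponentials vanish, so $\|(-\Delta)^{s/2}(l\star u)\|_{L^2} \to 0$ and $f_u(l) \to 0$. As $l \to +\infty$ the factor $e^{2s^2 l}$ blows up, so the kinetic norm diverges; since $B > 0$ (because $u \not\equiv 0$, as $a>0$) and $Ns(p-1) > 2s^2$, the nonlinear term dominates the kinetic one and $f_u(l) \to -\infty$.

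For (iii) I would differentiate,
\begin{equation*}
f_u'(l) = s^2 A\, e^{2s^2 l} - \tfrac{\mu Ns(p-1)}{2p}B\,e^{Ns(p-1)l} = e^{2s^2 l}\Bigl[s^2 A - \tfrac{\mu Ns(p-1)}{2p}B\,e^{[Ns(p-1)-2s^2]l}\Bigr].
\end{equation*}
Since the bracketed expression is strictly decreasing in $l$ (the exponent $Ns(p-1)-2s^2$ is strictly positive) and runs from $s^2A > 0$ down to $-\infty$, it vanishes at exactly one point $l(u) \in \mathbb{R}$, at which $f_u'$ changes sign from positive to negative; hence $l(u)$ is the unique maximizer of $f_u$. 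Rearranging the identity $f_u'(l(u))=0$ gives
\begin{equation*}
e^{2s^2 l(u)} A = \tfrac{(p-1)N\mu}{2ps}\,e^{Ns(p-1)l(u)} B,
\end{equation*}
which, by the two scaling identities above, is precisely the Pohozaev relation $\|(-\Delta)^{s/2}(l(u)\star u)\|_{L^2}^2 = \tfrac{(p-1)N\mu}{2ps}\|l(u)\star u\|_{L^{2p}}^{2p}$ defining $\mathcal{P}(a,\mu)$ in \eqref{2.7}. There is no real obstacle in this proof; the only point requiring care is the bookkeeping of scaling exponents, in particular the appearance of $2s^2$ (rather than $2s$) because the dilation parameter is $e^{sl}$ rather than $e^{l}$.
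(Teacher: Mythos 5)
Your proposal is correct and follows essentially the same route as the paper: the same scaling identities give $f_u(l)=\tfrac12 A e^{2s^2l}-\tfrac{\mu}{2p}Be^{(p-1)Nsl}$, and the ordering $2s^2<(p-1)Ns$ (i.e.\ $p>1+\tfrac{2s}{N}$) yields (i), (ii), while the sign change of $f_u'$ at its unique zero gives the unique maximizer, whose critical-point equation is exactly the defining relation of $\mathcal{P}(a,\mu)$. The paper additionally verifies $f_u''(l_0)<0$, but your monotonicity argument for the bracketed factor delivers the same conclusion.
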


\begin{proof}
By direct calculation, we have
\begin{equation*}
\|l\star u\|_{L^2}=a \quad\text{and}\quad \|(-\Delta)^\frac{s}{2} (l\star u)\|_{L^2}=e^{s^2l}\|(-\Delta)^\frac{s}{2} u\|_{L^2},
\end{equation*}
thus, $\|(-\Delta)^\frac{s}{2} (l\star u)\|_{L^2}\rightarrow 0$ as $l\rightarrow -\infty$, and $\|(-\Delta)^\frac{s}{2} (l\star u)\|_{L^2}\rightarrow +\infty$ as $l\rightarrow +\infty$.

Now we compute $f_u(l)$,
\begin{equation}\label{2.13}
\begin{array}{rl}
f_u(l)&=I_\mu(l\star u)=\int_{\mathbb{R}^N}(\frac12|(-\Delta)^\frac{s}{2}(l\star u)|^2-\frac{\mu}{2p}|l\star u|^{2p})dx\\
\\
&=\frac{e^{2s^2l}}{2}\|(-\Delta)^\frac{s}{2}u\|^2_{L^2}-\frac{e^{(p-1)Nsl}}{2p}\mu\|u\|^{2p}_{L^{2p}},
\end{array}
\end{equation}
thus, $I_{\mu}(l\star u)\rightarrow 0$ as $l\rightarrow -\infty$. Due to $p>1+\frac{2s}{N}$,  we have $I_{\mu}(l\star u)\rightarrow -\infty$ as $l\rightarrow +\infty$. (i),(ii) are proved. To show the third claim, by \eqref{2.13}, we have
\begin{equation}\label{2.24}
\begin{array}{rl}
f'_u(l)&=s^2e^{2s^2l}\|(-\Delta)^\frac{s}{2}u\|^2_{L^2}-\frac{(p-1)Ns\mu}{2p}{e^{(p-1)Nsl}}\|u\|_{L^{2p}}^{2p}\\
\\
&=s^2\|(-\Delta)^\frac{s}{2}(l\star u)\|^2_{L^2}-\frac{(p-1)Ns\mu}{2p}\|l\star u\|_{L^{2p}}^{2p}.
\end{array}
\end{equation}

Therefore $f'_u(l)=0$ is equivalent to
\begin{align}\label{2.20}
e^{s[(p-1)N-2s]l}=\frac{\|(-\Delta)^\frac{s}{2}u\|^2_{L^2}}{\frac{(p-1)N\mu}{2ps}\|u\|_{L^{2p}}^{2p}}.
\end{align}

So there exists a unique $l_0\in\mathbb{R}$ such that $f'_u(l)|_{l=l_0}=0$ and $l_0\star u\in \mathcal{P}(a,\mu)$. Furthermore, we have
\begin{align*}
f''_u(l)|_{l=l_0}&=\left(2s^4e^{2s^2l}\|(-\Delta)^\frac{s}{2}u\|^2_{L^2}-\frac{{(p-1)}^2N^2s^2\mu}{2p}{e^{(p-1)Nsl}}\|u\|_{L^{2p}}^{2p}\right)\Big|_{l=l_0}\nonumber\\
&=\Bigg(2s-(p-1)N\Bigg)\frac{e^{(p-1)Nsl_0}}{2p}s^2(p-1)N\mu\|u\|_{L^{2p}}^{2p}\nonumber\\
&<0.
\end{align*}
Note that,
\begin{align*}
f'_u(l)=
\left\{\begin{array}{cc}>0 & \text{if}~ l<l_0, \\=0 & \text{if}~  l=l_0, \\< 0 & \text{if}~ l>l_0.
\end{array}\right.
\end{align*}
This implies that $f_u(l)$ gets its unique maximum value at $l_0(u)$. If $u\in\mathcal{P}(a,\mu)$, then by \eqref{2.20}, $l_0=0$.
\end{proof}

When $\mu_0=({C_0}/{a^2})^{p-1}$ in \eqref{2.5}, by Lemma \ref{l2.5}, $\lambda_{a,\mu_0}=1$, i.e., $w_{a,\mu_0}$ is the unique radial solution of the following equation:
\begin{equation}\label{eq2.1}
\left\{\begin{aligned}
&(-\Delta)^s{w}+w=\mu_0w^{2p-1}~~~~\text{in}~\mathbb{R}^N,\\
&w(0)=\max_{x\in\mathbb{R}^N}{w},~\text{and}~\int_{\mathbb{R}^N}{w^2}dx=a^2,
\end{aligned}
\right.
\end{equation}
and hence is a minimizer of $I_{\mu_0}$ on $\mathcal{P}(a,\mu_0)$. Our next result shows that this level can also be characterized as an infimum of a Rayleigh-type quotient.

\begin{lemma}\label{l2.6}
\begin{equation}\label{2.30}
\inf_{u\in\mathcal{P}(a,\mu_0)}I_{\mu_0}(u)=\inf_{u\in H_a}\mathcal{R}(u),
\end{equation}
where
\begin{equation*}
\mathcal{R}(u):=\frac{R_0(\int_{\mathbb{R}^N}|(-\Delta)^\frac{s}{2} u|^2dx)^\frac{(p-1)N}{(p-1)N-2s}}{(\mu_0\int_{\mathbb{R}^N}|u|^{2p}dx)^\frac{2s}{(p-1)N-2s}},
\end{equation*}and $R_0$ is defined in \eqref{eq1.1}.
\end{lemma}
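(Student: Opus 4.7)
The plan is to compute both sides of \eqref{2.30} explicitly and verify they coincide via the sharp constant in the fractional Gagliardo--Nirenberg inequality. For the left-hand side, Lemma \ref{l2.4} identifies $\inf_{\mathcal{P}(a,\mu_0)}I_{\mu_0} = I_{\mu_0}(w_{a,\mu_0})$, and \eqref{2.151} already evaluates this at the minimizer. Using either \eqref{the optimal constant} to express $C_{opt}$ in terms of $C_0, C_1$ or reading off \eqref{2.29} directly, this minimum rewrites as
\begin{equation*}
\inf_{u\in\mathcal{P}(a,\mu_0)}I_{\mu_0}(u) \;=\; \frac{R_0}{(\mu_0 C_{opt})^{\frac{2s}{(p-1)N-2s}}\,a^{\frac{4ps-2(p-1)N}{(p-1)N-2s}}},
\end{equation*}
where $R_0$ is the constant from \eqref{eq1.1}. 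This is the target value for $\inf_{H_a}\mathcal{R}$.

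For the lower bound on the Rayleigh side, I fix $u\in H_a$ and apply \eqref{Gagliardo-Nirenberg-Sobolev inequality} with $\alpha = 2p-2$, using $\|u\|_{L^2}^2 = a^2$, to get
\begin{equation*}
\int_{\mathbb{R}^N}|u|^{2p}dx \;\leq\; C_{opt}\,a^{\frac{2ps-(p-1)N}{s}}\Bigl(\int_{\mathbb{R}^N}|(-\Delta)^{s/2}u|^2dx\Bigr)^{\frac{(p-1)N}{2s}}.
\end{equation*}
Raising this to the power $\frac{2s}{(p-1)N-2s}$ and inserting it into the denominator of $\mathcal{R}(u)$ produces exactly
\begin{equation*}
\mathcal{R}(u) \;\geq\; \frac{R_0}{(\mu_0 C_{opt})^{\frac{2s}{(p-1)N-2s}}\,a^{\frac{4ps-2(p-1)N}{(p-1)N-2s}}},
\end{equation*}
the same constant as above. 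Hence $\inf_{H_a}\mathcal{R} \geq \inf_{\mathcal{P}(a,\mu_0)}I_{\mu_0}$.

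To close the argument, I exhibit an extremizer. The optimizers of the fractional Gagliardo--Nirenberg inequality are precisely the (nonzero) rescalings of $w_0$; by Lemma \ref{l2.5}, $w_{a,\mu_0}$ is such a rescaling and satisfies $\|w_{a,\mu_0}\|_{L^2}=a$, so $w_{a,\mu_0}\in H_a$. Evaluating $\mathcal{R}(w_{a,\mu_0})$ using the explicit formulas \eqref{2.27} and \eqref{2.28} reproduces the common value, giving the reverse inequality. Thus $\inf_{H_a}\mathcal{R}$ is attained at $w_{a,\mu_0}$ and equals $\inf_{\mathcal{P}(a,\mu_0)} I_{\mu_0}$, proving \eqref{2.30}. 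The only real work is clerical: matching exponents of $\mu_0$ and $a$ and confirming that the constant $R_0$ defined in \eqref{eq1.1} is precisely the factor that aligns the two expressions. I do not foresee any analytical obstacle beyond that bookkeeping.
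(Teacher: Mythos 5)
Your proof is correct, but it takes a genuinely different route from the paper. The paper's argument is soft and computation-free: it observes that on $\mathcal{P}(a,\mu_0)$ the constraint forces $I_{\mu_0}(u)=\mathcal{R}(u)$ pointwise (giving $\inf_{\mathcal{P}}I_{\mu_0}\ge\inf_{H_a}\mathcal{R}$), and then uses the dilation invariance $\mathcal{R}(l\star u)=\mathcal{R}(u)$ together with Lemma \ref{l2.7}(iii), which projects any $u\in H_a$ along its fiber onto $\mathcal{P}(a,\mu_0)$, to get $\mathcal{R}(u)=I_{\mu_0}(l_0(u)\star u)\ge\inf_{\mathcal{P}}I_{\mu_0}$. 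You instead compute both infima explicitly: the left side via Lemma \ref{l2.4} and \eqref{2.151}/\eqref{2.29}, and the right side by feeding the sharp Gagliardo--Nirenberg bound \eqref{2.15} into the denominator of $\mathcal{R}$ (the kinetic terms cancel exactly, which is what makes your uniform lower bound work), with attainment at $w_{a,\mu_0}$. Your version buys more — the explicit common value $R_0\,(\mu_0 C_{opt})^{-\frac{2s}{(p-1)N-2s}}a^{-\frac{4ps-2(p-1)N}{(p-1)N-2s}}$ and the fact that the infimum is attained — at the cost of sharp-constant bookkeeping; the paper's fiber-map argument is shorter and, more importantly, is the one that transfers to the two-component quotient in Theorem \ref{t1.2} (the unproved claim \eqref{f}), where explicit optimizers are not available. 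One caveat your ``clerical check'' would surface: for the constants to match, one needs
\begin{equation*}
R_0=\frac{(p-1)N-2s}{2(p-1)N}\Bigl(\frac{2ps}{(p-1)N}\Bigr)^{\frac{2s}{(p-1)N-2s}},
\end{equation*}
whereas \eqref{eq1.1} as printed has $(p-1)N-s$ in the numerator; this is evidently a typo in the paper (the paper's own identity $I_{\mu_0}=\mathcal{R}$ on $\mathcal{P}(a,\mu_0)$ also requires the $(p-1)N-2s$ version), so state the corrected constant rather than asserting that \eqref{eq1.1} literally aligns the two expressions.
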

\begin{proof}
If $u\in \mathcal{P}(a,\mu_0)$, then
\begin{equation}\label{2.31}
\frac{2ps\int_{\mathbb{R}^N}|(-\Delta)^\frac{s}{2}u|^2dx}{(p-1)N\mu_0\int_{\mathbb{R}^N}|u|^{2p}dx}=1\quad\text{and}
\quad I_{\mu_0}(u)=(\frac12-\frac{s}{(p-1)N})\int_{\mathbb{R}^N}|(-\Delta)^\frac{s}{2}u|^2dx.
\end{equation}
Therefore,
\begin{equation*}
\begin{array}{rl}
I_{\mu_0}(u)&=(\frac12-\frac{s}{(p-1)N})\int_{\mathbb{R}^N}|(-\Delta)^\frac{s}{2}u|^2dx\left(\frac{2ps\int_{\mathbb{R}^N}|(-\Delta)^\frac{s}{2}u|^2dx}{(p-1)N\mu_0\int_{\mathbb{R}^N}|u|^{2p}dx}\right)^\frac{2s}{(p-1)N-2s},\\
&=\mathcal{R}(u),
\end{array}
\end{equation*}
which proves that
\begin{equation*} \inf_{u\in\mathcal{P}(a,\mu_0)}I_{\mu_0}(u)\geqslant\inf_{u\in H_a}\mathcal{R}(u).
\end{equation*}

On the other hand, for all $l\in\mathbb{R}$ and $u\in H_a$, direct calculation shows that
\begin{equation*}
\mathcal{R}(u)=\mathcal{R}(l\star u).
\end{equation*}
By Lemma \ref{l2.7}, we know that for $u\in H_a$ be arbitrary but fixed, there exists a unique $l_0(u)\in \mathbb{R}$ such that  $l_0(u)\star u\in \mathcal{P}(a,\mu_0)$, and $I_{\mu_0}(l\star u)$ reaches its unique maximum at $l_0(u)\star u$. Hence, for every $u\in H_a$, we have
\begin{align*}
\mathcal{R}(u)=\mathcal{R}(l_0(u)\star u)=I_{\mu_0}(l_0(u)\star u)\geqslant \inf_{v\in\mathcal{P}(a,\mu_0)}I_{\mu_0}(v),
\end{align*}
which proves that
\begin{align*}
 \inf_{u\in\mathcal{P}(a,\mu_0)}I_{\mu_0}(u)\leqslant\inf_{u\in H_a}\mathcal{R}(u).
\end{align*}
\end{proof}

Next, we give a Liouville-type result for fractional Laplacian. Similar Liouville-type result for Laplacian can be found in \cite{I14}.
\begin{lemma}\label{l2.3}
Let $u\in{H^s(\mathbb{R}^N)}$ with $N>2s$,
\begin{itemize}
\item[(i)]If $u$ satisfies
\begin{equation*}
\left\{
\begin{aligned}&(-\Delta)^s{u}\geq0\qquad\quad\quad\text{in}~{\mathbb{R}^N},\\
&u\in{L^q({\mathbb{R}^N})},~q\in{(0,\frac{N}{N-2s}]},\\
&u\geq0,
\end{aligned}
\right.
\end{equation*}
then $u\equiv0$.
\item[(ii)]If $u$ satisfies
\begin{equation*}
\left\{\begin{aligned}&(-\Delta)^s{u}\geq{u^q}\qquad\quad\quad\text{in}~{\mathbb{R}^N},\\
&u\geq 0,~\text{and}~q\in(1,\frac{N}{N-2s}],
\end{aligned}
\right.
\end{equation*}
then $u\equiv0$.
\end{itemize}
\end{lemma}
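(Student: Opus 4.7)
The plan is to establish (i) first; then (ii) follows almost immediately from the standing assumption $u \in H^s(\mathbb{R}^N)$ combined with $2s < N \leq 4s$. The overall strategy in (i) is to show that the non-negative distribution $f := (-\Delta)^s u$ vanishes identically. Once $f \equiv 0$, applying Plancherel to the identity $|\xi|^{2s}\hat u(\xi) \equiv 0$ together with $\hat u \in L^2$ (since $u \in H^s$) immediately forces $u \equiv 0$; this bypasses any need to quote a separate Liouville theorem for $s$-harmonic functions.

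For part (i), I would run a cutoff/scaling test-function argument. Fix $\phi \in C_c^\infty(\mathbb{R}^N)$ with $0 \leq \phi \leq 1$, $\phi \equiv 1$ on $B_1$, $\mathrm{supp}\,\phi \subset B_2$, and set $\phi_R(x) := \phi(x/R)$. Since $\phi$ is Schwartz one has the pointwise bound $|(-\Delta)^s \phi(x)| \leq C(1+|x|)^{-N-2s}$, and the scaling identity $(-\Delta)^s \phi_R(x) = R^{-2s}(-\Delta)^s \phi(x/R)$ yields $\|(-\Delta)^s \phi_R\|_{L^{q'}(\mathbb{R}^N)} \leq C R^{N/q' - 2s}$ for the H\"older conjugate $q'$ of $q$ (without loss of generality one may assume $q \geq 2$, using $u \in H^s \hookrightarrow L^2$). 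Because $f$ is a non-negative Radon measure, duality and H\"older's inequality give
$$0 \leq \int_{\mathbb{R}^N} \phi_R \, df = \int_{\mathbb{R}^N} u\,(-\Delta)^s \phi_R \, dx \leq C \|u\|_{L^q}\, R^{\,N - N/q - 2s}.$$
For $q < N/(N-2s)$ the exponent is strictly negative, so letting $R \to \infty$ forces $f \equiv 0$. For the critical endpoint $q = N/(N-2s)$ the exponent vanishes and this bound only gives that $f$ is a finite measure; in that case I would invoke the Riesz-potential representation $u = I_{2s} \ast f$ together with the pointwise lower bound $u(x) \geq c|x|^{-(N-2s)} \int_{B_R} df$ valid for $|x| \geq 2R$, which would force $\int_{|x|\geq 2R} u^{N/(N-2s)} \, dx = +\infty$ unless $f$ charges no ball. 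In either case $f \equiv 0$, and Plancherel closes the argument.

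For part (ii), given $(-\Delta)^s u \geq u^q \geq 0$ and $u \geq 0$, the Sobolev embedding $H^s \hookrightarrow L^2$ places $u$ in $L^2$; since $N \leq 4s$ implies $N/(N-2s) \geq 2$, the exponent $2$ is admissible in part (i), so $u \equiv 0$ follows at once. The main obstacle is the critical endpoint $q = N/(N-2s)$ of part (i), where the naive test-function estimate degenerates from decay to mere boundedness and one must invoke the Riesz-potential representation to convert the borderline decay of $I_{2s}\ast f$ into a contradiction with the $L^{N/(N-2s)}$ integrability of $u$; every other step is essentially standard.
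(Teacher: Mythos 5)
Your route for part (i) is genuinely different from the paper's: the paper gets the decay bound $u(x)\ge C|x|^{2s-N}$ for any nontrivial nonnegative supersolution via the Kelvin transform together with the B\^ocher-type maximum principle of Li--Wu--Xu, and then contradicts $u\in L^q$ for all $q\le\frac{N}{N-2s}$ at once, while you use scaled cutoffs plus Plancherel. The subcritical computation is correct, but two points need attention. First, at the endpoint $q=\frac{N}{N-2s}$ everything rests on the invocation ``$u=I_{2s}\ast f$''. For a distributional supersolution this is not a free identity: what is true (and all you need) is the inequality $u\ge I_{2s}\ast f$, and even that requires a potential-theoretic argument (Green-function representation in $B_R$ with a comparison principle and $R\to\infty$, or a Riesz decomposition plus a Liouville theorem for the $s$-harmonic remainder). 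It is an input of essentially the same depth as the cited theorem the paper leans on, so it must be proved or precisely cited, not waved through --- and it is the entire content of your critical case. Second, ``without loss of generality $q\ge2$'' is only legitimate when $2\le\frac{N}{N-2s}$, i.e. $N\le4s$; for general $N>2s$ and $q\le1$ the correct fix is to interpolate $L^q\cap L^2\subset L^r$ for some $r\in\bigl(1,\frac{N}{N-2s}\bigr)$.

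The genuine gap is in (ii). The lemma assumes only $N>2s$ and asserts the conclusion for every $q\in\bigl(1,\frac{N}{N-2s}\bigr]$ in that generality. The paper's proof tests the inequality against rescaled first Dirichlet eigenfunctions $\varphi_R$ of the ball, obtaining $\int_{B_{R/2}}u^q\,dx\le CR^{\frac{N(q-1)-2sq}{q-1}}$, which gives $u\equiv0$ for subcritical $q$ and $u\in L^q$ at the critical exponent, after which (i) applies; no upper bound on $N$ is needed. You instead deduce (ii) from (i) with exponent $2$, which uses $2\le\frac{N}{N-2s}$, i.e. $N\le4s$ --- a hypothesis of the paper's theorems but not of this lemma. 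For $N>4s$ the information $u\in L^2\cap L^{\frac{2N}{N-2s}}$ lies entirely above the admissible range in (i), so your reduction collapses and one must exploit the nonlinear term $u^q$, exactly as the paper does. So as written you prove a strictly weaker statement than the one asserted; within the standing restriction $2s<N\le4s$ your shortcut is fine (and is indeed how the paper applies the lemma in Section 3), but a proof of (ii) as stated needs the eigenfunction-type test-function step.
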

\begin{proof}
We prove (i) by contradiction. If $u\not\equiv0$, by maximum principle, we have $u>0~\text{in}~{\mathbb{R}^N}$. Let $v(x)=\frac{1}{|x|^{N-2s}}u(\frac{x}{|x|^2})$. Then $v(x)>0$ in $\mathbb{R}^N\setminus\{0\}$, and $v(x)$ satisfies
\begin{equation*}
(-\Delta)^s{v(x)}=\frac{1}{|x|^{N+2s}}(-\Delta)^s{u(\frac{x}{|x|^2})}~~~~\text{in}~{\mathbb{R}^N}\setminus\{0\},
\end{equation*}
so $(-\Delta)^s{v}\geq0$ in distribution sense. Since $u\in H^s(\mathbb{R}^N)\subset{L_{2s}}(\mathbb{R}^N)$, where
$$
L_{2s}(\mathbb{R}^N)=\{w(x):{\mathbb{R}^N}\rightarrow{\mathbb{R}}~|~\int_{\mathbb{R}^N}\frac{|w(x)|}{1+{|x|^{N+2s}}}dx<+\infty\},
$$ we can see that $v\in L_{2s}(\mathbb{R}^N)$.  By Theorem 1 in \cite{LC2018}, there exists a constant $C>0$ such that
\begin{equation*}
\inf_{|x|<\frac12}v(x)\geq{C}.\\
\end{equation*}
Therefore, we obtain that
\begin{equation*}
u(x)\geq\frac{C}{|x|^{N-2s}},~|x|>2.
\end{equation*}
For $q\in(0,\frac{N}{N-2s}]$, we can compute
\begin{equation*}
\int_{\mathbb{R}^N}u^qdx\geq C\int_{|x|>2}\frac{1}{|x|^{(N-2s)q}}dx\geq C\int_{|x|>2}\frac{1}{|x|^{N}}dx=+\infty,
\end{equation*}
which is a contradiction to $u\in L^q({\mathbb{R}^N})$. So $u\equiv0$.

To prove (ii), let $\varphi$ be the first eigenfunction of
\begin{equation*}
\left\{\begin{aligned}
&(-\Delta)^s{\varphi}={\lambda_1}{\varphi}~~~\text{in}~{B_1(0)},\\
&{\varphi}\equiv0~~~~~~\text{in}~{B^c_{1}(0)},
\end{aligned}\right.
\end{equation*}
where $B_1(0)$ is the unit ball in ${\mathbb{R}^N}$, $\varphi>0$ in $B_1(0)$ and $\lambda_1>0$ is the first eigenvalue of $(-\Delta)^s$ in $B_1(0)$. For any $R>0$ but fixed, let $\varphi_{R}(x)=\varphi(\frac{x}{R})$. Then
\begin{equation*}
\left\{\begin{aligned}&(-\Delta)^s{\varphi_{R}}=R^{-2s}\lambda_1{\varphi_{R}}~~~\text{in}~B_R(0), \\
&{\varphi_{R}}\equiv0~~~~~~\text{in}~B^c_{R}(0).
\end{aligned}\right.
\end{equation*}
We can compute
\begin{align*}
\int_{B_R(0)}u^q{\varphi_{R}}dx&=\int_{\mathbb{R}^N}u^q{\varphi_{R}}dx\leq\int_{\mathbb{R}^N}(-\Delta)^su{\varphi_{R}}dx\\
&=\int_{\mathbb{R}^N}(-\Delta)^s{\varphi_{R}}udx=\int_{B_R(0)}uR^{-2s}\lambda_1{\varphi_{R}}dx+\int_{B^c_{R}(0)}(-\Delta)^s{\varphi_{R}}udx\\
&\leq\int_{B_R(0)}uR^{-2s}\lambda_1{\varphi_{R}}dx\leq{R^{-2s}\lambda_1}\Bigg(\int_{B_R(0)}u^q{\varphi_{R}}dx\Bigg)^{\frac{1}{q}}\Bigg(\int_{B_R(0)}{\varphi_{R}}dx\Bigg)^{1-\frac{1}{q}},
\end{align*}
in the above, we have use the fact that $(-\Delta)^s\varphi_R<0$ in $B_R^c(0)$. Therefore
\begin{equation}\label{eqc}
\int_{B_R(0)}u^q{\varphi_{R}}dx\leq{C}R^{-\frac{2sq}{q-1}}\int_{\mathbb{R}^N}\varphi_{R}dx\le CR^{\frac{N(q-1)-2sq}{q-1}}.
\end{equation}

When $q\in(1,\frac{N}{N-2s})$, we have
\begin{equation*}
\min_{B_{\frac12}(0)}\varphi\cdot\int_{B_{\frac{R}{2}}(0)}u^qdx\leq\int_{\mathbb{R}^N}u^q{\varphi_{R}}dx\leq{C}R^{\frac{N(q-1)-2sq}{q-1}}\rightarrow0,~\text{as}~R\rightarrow\infty.
\end{equation*}
So we have $u\equiv0$.

When $q=\frac{N}{N-2s}$, we have
\begin{equation*}
\min_{B_{\frac12}(0)}\varphi\cdot\int_{B_{\frac{R}{2}}(0)}u^qdx\leq\int_{\mathbb{R}^N}u^q{\varphi_{R}}dx\leq{C} \quad\text{for all $R>0$},
\end{equation*}
with $C$ independent of $R$ by \eqref{eqc}, so $u\in L^q(\mathbb{R}^N)$. By (i), we obtain $u\equiv0$.
\end{proof}

\section{Proof of Theorem \ref{t1.1}}\label{sec3}

In this section, we give the proof of Theorem \ref{t1.1}. We work in a radial setting. That is, we find the critical point of the functional $E$ constrained on $H^{rad}_{a_1}\times{H^{rad}_{a_2}}$, where for any $a>0$, we define
\begin{equation*}
H^{rad}_{a}:=H_{a}\cap H_r^s(\mathbb{R}^N),
\end{equation*}
and $H^s_{r}(\mathbb{R}^N)$ is the subset of $H^s(\mathbb{R}^N)$ containing all the functions which are radial with respect to the origin. We know that $H^s_r(\mathbb{R}^N)\hookrightarrow L^p(\mathbb{R}^N)$ is compact when $2<p<\frac{2N}{N-2s}$. Due to the Palais principle of symmetric criticality, the critical points of $E$ constrained on $H^{rad}_{a_1}\times H^{rad}_{a_2}$ are true critical points of $E$ constrained in the full product $H_{a_1}\times H_{a_2}$.

\vspace{1.5mm}
For $a_1,a_2,\mu_1,\mu_2>0$, let $\beta_1>0$ be defined by \eqref{3.1}.
\begin{lemma}\label{l3.2}
For $0<\beta<\beta_1$, there holds:
\begin{equation*}
\inf\{E(u_1,u_2):(u_1,u_2)\in\mathcal{P}(a_1,\mu_1+\beta)\times\mathcal{P}(a_2,\mu_2+\beta)\}>\max\{I_{\mu_1}(w_{a_1,\mu_1}),I_{\mu_2}(w_{a_2,\mu_2})\},
\end{equation*}
where $I_{\mu_i}(w_{a_i,\mu_1})$, $i=1,2$ is defined by \eqref{2.29}.
\end{lemma}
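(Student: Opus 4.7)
The plan is to separate the coupled estimate into two decoupled ones via Young's inequality, identify each decoupled minimum using Lemma \ref{l2.4} together with the explicit formula \eqref{2.29}, and finally read the strict inequality directly off the defining relation \eqref{3.1} for $\beta_1$ using monotonicity in $\beta$.

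First I would exploit the elementary bound $2|u_1|^p|u_2|^p\leq |u_1|^{2p}+|u_2|^{2p}$ applied to the coupling term of $E$. This turns the mixed nonlinearity into a diagonal one with augmented self-couplings $\mu_i+\beta$, giving
\begin{equation*}
E(u_1,u_2)\;\geq\; I_{\mu_1+\beta}(u_1)+I_{\mu_2+\beta}(u_2)
\end{equation*}
for every $(u_1,u_2)\in H_{a_1}\times H_{a_2}$. Now assume in addition that $u_i\in\mathcal{P}(a_i,\mu_i+\beta)$. By Lemma \ref{l2.4} applied to each component separately, the positive solution $w_{a_i,\mu_i+\beta}$ minimizes $I_{\mu_i+\beta}$ on $\mathcal{P}(a_i,\mu_i+\beta)$, so $I_{\mu_i+\beta}(u_i)\geq I_{\mu_i+\beta}(w_{a_i,\mu_i+\beta})$. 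Taking the infimum over the constraint set yields
\begin{equation*}
\inf\bigl\{E(u_1,u_2)\bigr\}\;\geq\; I_{\mu_1+\beta}(w_{a_1,\mu_1+\beta})+I_{\mu_2+\beta}(w_{a_2,\mu_2+\beta}).
\end{equation*}

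Next, using the explicit evaluation \eqref{2.29}, I would set the positive constant $K:=\frac{(p-1)N-2s}{4ps}\,C_1\,C_0^{(2ps-(p-1)N)/((p-1)N-2s)}$ and the exponents $\alpha:=\frac{2s}{(p-1)N-2s}$, $\gamma:=\frac{4ps-2(p-1)N}{(p-1)N-2s}$, both strictly positive under the assumption $1+\frac{2s}{N}<p<\frac{N}{N-2s}$. Then \eqref{2.29} gives $I_{\mu_i+\beta}(w_{a_i,\mu_i+\beta})=K\bigl(a_i^{\gamma}(\mu_i+\beta)^{\alpha}\bigr)^{-1}$ and analogously for $I_{\mu_j}(w_{a_j,\mu_j})$. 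Dividing through by $K$, the claim of the lemma reduces to
\begin{equation*}
\frac{1}{a_1^{\gamma}(\mu_1+\beta)^{\alpha}}+\frac{1}{a_2^{\gamma}(\mu_2+\beta)^{\alpha}}\;>\;\max\Bigl\{\tfrac{1}{a_1^{\gamma}\mu_1^{\alpha}},\tfrac{1}{a_2^{\gamma}\mu_2^{\alpha}}\Bigr\}.
\end{equation*}

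The conclusion is now immediate: the left-hand side is a strictly decreasing function of $\beta\geq 0$ (since $\alpha>0$), while the right-hand side does not depend on $\beta$; by the very definition \eqref{3.1}, equality holds precisely at $\beta=\beta_1$, so for every $\beta\in(0,\beta_1)$ the inequality is strict. There is no real analytical obstacle here once one spots the decoupling via Young's inequality; the only subtlety to be careful about is to verify that both exponents $\alpha$ and $\gamma$ are positive in the full range $1+\frac{2s}{N}<p<\frac{N}{N-2s}$, which is what guarantees strict monotonicity in $\beta$ and makes \eqref{3.1} a well-posed definition of a unique $\beta_1>0$.
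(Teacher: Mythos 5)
Your proposal is correct and follows essentially the same route as the paper: decouple the interaction term via $2u_1^pu_2^p\le u_1^{2p}+u_2^{2p}$ to get $E(u_1,u_2)\ge I_{\mu_1+\beta}(u_1)+I_{\mu_2+\beta}(u_2)$, invoke Lemma \ref{l2.4} for the minimality of $w_{a_i,\mu_i+\beta}$ on $\mathcal{P}(a_i,\mu_i+\beta)$, and then compare with $\max\{I_{\mu_1}(w_{a_1,\mu_1}),I_{\mu_2}(w_{a_2,\mu_2})\}$ via the explicit formula \eqref{2.29} and the definition \eqref{3.1} of $\beta_1$. Your explicit monotonicity-in-$\beta$ argument (with the check that both exponents are positive) simply spells out the step the paper dismisses as easy, so no gap remains.
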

\begin{proof}
For $(u_1,u_2)\in\mathcal{P}(a_1,\mu_1+\beta)\times\mathcal{P}(a_2,\mu_2+\beta)\}$, we have
\begin{align*}
E(u_1,u_2)=&\int_{\mathbb{R}^N}\left(\frac12|(-\Delta)^\frac{s}{2}u_1|^2-\frac{\mu_1}{2p}|u_1|^{2p}\right)dx\\
&+\int_{\mathbb{R}^N}\left(\frac12|(-\Delta)^\frac{s}{2}u_2|^2-\frac{\mu_2}{2p}|u_2|^{2p}\right)dx-\frac{\beta}{p}\int_{\mathbb{R}^N}u^p_1u^p_2dx\\
\geqslant& I_{\mu_1}(u_1)+I_{\mu_2}(u_2)-\frac{\beta}{2p}\int_{\mathbb{R}^N}u^{2p}_1dx-\frac{\beta}{2p}\int_{\mathbb{R}^N}u^{2p}_2dx\\
=&I_{\mu_1+\beta}(u_1)+I_{\mu_2+\beta}(u_2)\\
\geqslant& \inf_{u\in\mathcal{P}(a_1,\mu_1+\beta)}I_{\mu_1+\beta}(u)+\inf_{v\in\mathcal{P}(a_1,\mu_1+\beta)}I_{\mu_2+\beta}(v)\\
= &I_{\mu_1+\beta}(w_{a_1,\mu_1+\beta})+I_{\mu_2+\beta}(w_{a_2,\mu_2+\beta}),
\end{align*}
by Lemma \ref{l2.4}. From \eqref{2.29} and \eqref{3.1}, it is easy to get, when $0<\beta<\beta_1$, we have
\begin{align*}
&\max\{I_{\mu_1}(w_{a_1,\mu_1}),I_{\mu_2}(w_{a_2,\mu_2})\}\\
&=\max\{\frac{(p-1)N-2s}{4ps}\frac{C_1C_0^{\frac{2ps-(p-1)N}{(p-1)N-2s}}}{{\mu_1}^\frac{2s}{(p-1)N-2s}a_1^\frac{4ps-2(p-1)N}{(p-1)N-2s}}
,\frac{(p-1)N-2s}{4ps}\frac{C_1C_0^{\frac{2ps-(p-1)N}{(p-1)N-2s}}}{{\mu_2}^\frac{2s}{(p-1)N-2s}a_2^\frac{4ps-2(p-1)N}{(p-1)N-2s}}\}\\
&< I_{\mu_1+\beta}(w_{a_1,\mu_1+\beta})+I_{\mu_2+\beta}(w_{a_2,\mu_2+\beta}).
\end{align*}
Therefore,
\begin{align*}
&\inf\{E(u_1,u_2):(u_1,u_2)\in\mathcal{P}(a_1,\mu_1+\beta)\times\mathcal{P}(a_2,\mu_2+\beta)\}\\
&>\max\{I_{\mu_1}(w_{a_1,\mu_1}),I_{\mu_2}(w_{a_2,\mu_2})\}.
\end{align*}
\end{proof}

Now we fix $0<\beta<\beta_1$ and choose $\varepsilon>0$ such that
\begin{align}\label{3.2}
&\inf\{E(u_1,u_2):(u_1,u_2)\in\mathcal{P}(a_1,\mu_1+\beta)\times\mathcal{P}(a_2,\mu_2+\beta)\}\nonumber\\
&>\max\{I_{\mu_1}(w_{a_1,\mu_1}),I_{\mu_2}(w_{a_2,\mu_2})\}+\varepsilon .
\end{align}
Denote
\begin{align}\label{3.3}
w_1:=w_{a_1,\mu_1+\beta}\quad\text{and}\quad w_2:=w_{a_2,\mu_2+\beta},
\end{align}
and for $i=1,2$,
\begin{align}\label{3.4}
\varphi_i(l):=I_{\mu_i}(l\star w_i)\quad\text{and}\quad \tilde{\varphi}_i(l):=\frac{\partial}{\partial l}I_{\mu_i+\beta}(l\star w_i).
\end{align}

\begin{lemma}\label{l3.3}
For $i=1,2$, there exist $\rho_i<0$ and $R_i>0$, depending on $\varepsilon$ and $\beta$, such that
\begin{itemize}
\item[(i)] $0<\varphi_i(\rho_i)<\varepsilon$ and $\varphi_i(R_i)\leqslant 0$;
\item[(ii)] $\tilde{\varphi}_i(l)>0$ for any $l<0$,  $\tilde{\varphi}_i(0)=0$ and $\tilde{\varphi}_i(l)<0$ for any $l>0$. In particular, $\tilde{\varphi}_i(\rho_i)>0$ and $\tilde{\varphi}_i(R_i)<0$.
\end{itemize}
\end{lemma}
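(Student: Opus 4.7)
The plan is to make both claims follow from the explicit formula (2.13) for $I_\mu(l\star u)$ and its derivative (2.24), combined with the Pohozaev-type condition $w_i\in\mathcal{P}(a_i,\mu_i+\beta)$ guaranteed by Lemma \ref{l2.5}. Writing out the definitions,
\begin{align*}
\varphi_i(l) &= \tfrac{e^{2s^2 l}}{2}\|(-\Delta)^{s/2} w_i\|_{L^2}^2 - \tfrac{\mu_i}{2p} e^{(p-1)Nsl}\|w_i\|_{L^{2p}}^{2p},\\
\tilde{\varphi}_i(l) &= s^2 e^{2s^2 l}\|(-\Delta)^{s/2} w_i\|_{L^2}^2 - \tfrac{(p-1)Ns(\mu_i+\beta)}{2p} e^{(p-1)Nsl}\|w_i\|_{L^{2p}}^{2p}.
\end{align*}

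I would first establish (ii), since it feeds directly into (i). As $w_i=w_{a_i,\mu_i+\beta}\in\mathcal{P}(a_i,\mu_i+\beta)$, the defining relation (2.7) yields
$$\|(-\Delta)^{s/2}w_i\|_{L^2}^2=\tfrac{(p-1)N(\mu_i+\beta)}{2ps}\|w_i\|_{L^{2p}}^{2p}.$$
Plugging this into $\tilde\varphi_i(l)$ produces the factorization
$$\tilde\varphi_i(l)=\tfrac{(p-1)Ns(\mu_i+\beta)}{2p}\|w_i\|_{L^{2p}}^{2p}\bigl(e^{2s^2 l}-e^{(p-1)Nsl}\bigr).$$
The prefactor is strictly positive; furthermore $p>1+\frac{2s}{N}$ is equivalent to $(p-1)Ns>2s^2$, so the bracket is positive for $l<0$, vanishes at $l=0$, and is negative for $l>0$, which is precisely (ii).

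For (i), I would extract the asymptotics of $\varphi_i$. Factoring out $e^{2s^2 l}$ gives
$$\varphi_i(l)=e^{2s^2 l}\left[\tfrac{1}{2}\|(-\Delta)^{s/2}w_i\|_{L^2}^2-\tfrac{\mu_i}{2p} e^{((p-1)N-2s)sl}\|w_i\|_{L^{2p}}^{2p}\right].$$
Since $(p-1)N-2s>0$, the bracket converges to the positive constant $\tfrac12\|(-\Delta)^{s/2}w_i\|_{L^2}^2$ as $l\to-\infty$, so $\varphi_i(l)\to 0^+$ through positive values; for the given $\varepsilon>0$ I may then select $\rho_i<0$ sufficiently negative so that $0<\varphi_i(\rho_i)<\varepsilon$. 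Conversely, as $l\to+\infty$ the negative term dominates (again by $(p-1)N>2s$) and $\varphi_i(l)\to-\infty$, which furnishes $R_i>0$ with $\varphi_i(R_i)\leq 0$. The ``in particular'' statements are immediate consequences of (ii) applied to the already-constructed $\rho_i<0$ and $R_i>0$.

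There is no real obstacle: the whole lemma reduces to elementary one-variable calculus on two explicit exponential-sum expressions, once the Pohozaev identity for $w_i$ is used. The one point worth watching is that $\varphi_i$ carries the coefficient $\mu_i$ whereas $\tilde\varphi_i$ carries $\mu_i+\beta$, which is exactly the coefficient for which $w_i$ is Pohozaev-critical, so the clean factorization is available for $\tilde\varphi_i$ but not for $\varphi_i$.
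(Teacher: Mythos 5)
Your proof is correct and follows essentially the same route as the paper: the Pohozaev relation $\|(-\Delta)^{s/2}w_i\|_{L^2}^2=\frac{(p-1)N(\mu_i+\beta)}{2ps}\|w_i\|_{L^{2p}}^{2p}$ yields the sign factorization of $\tilde\varphi_i$, and the exponent comparison $(p-1)Ns>2s^2$ gives the asymptotics of $\varphi_i$ needed for $\rho_i$ and $R_i$. The only cosmetic difference is that the paper also substitutes the Pohozaev identity into $\varphi_i$, which is unnecessary, as your factoring argument shows.
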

\begin{proof}
By Lemma \ref{l2.4} and Lemma \ref{l2.7}, we have
\begin{align*}
\varphi_i(l)&= \int_{\mathbb{R}^N}\left(\frac12|(-\Delta)^\frac{s}{2}(l\star w_i)|^2-\frac{\mu_i}{2p}|l\star w_i|^{2p}\right)dx      \nonumber\\
&=\frac{e^{2s^2l}}{2}\|(-\Delta)^\frac{s}{2}w_i\|^2_{L^2}-\frac{e^{(p-1)Nsl}}{2p}\mu_i\|w_i\|^{2p}_{L^{2p}}\nonumber\\
&=\Bigg(\frac{(p-1)N(\mu_i+\beta)}{2ps}\frac{e^{2s^2l}}{2}-\frac{e^{(p-1)Nsl}}{2p}\mu_i\Bigg)\|w_i\|^{2p}_{L^{2p}},
\end{align*}
thus, $\varphi_i(l)\rightarrow 0^+$ as $l\rightarrow -\infty$, and $\varphi_i(l)\rightarrow -\infty$ as $l\rightarrow +\infty$.  Therefore, there exist $\rho_i<0$ and $R_i>0$, such that $0<\varphi_i(\rho_i)<\varepsilon$ and $\varphi_i(R_i)\leqslant 0$.
\begin{align*}
\tilde{\varphi}_i(l)&=s^2e^{2s^2l}\|(-\Delta)^\frac{s}{2}w_i\|^2_{L^2}-\frac{e^{(p-1)Nsl}(p-1)N}{2p}s(\mu_i+\beta)\int_{\mathbb{R}^N}|w_i|^{2p}dx\nonumber\\
&=\Bigg(\frac{(p-1)N(\mu_i+\beta)}{2ps}s^2e^{2s^2l}-\frac{e^{(p-1)Nsl}(p-1)N}{2p}s(\mu_i+\beta)\Bigg)\int_{\mathbb{R}^N}|w_i|^{2p}dx\nonumber\\
&=\frac{(p-1)N(\mu_i+\beta)}{2p}se^{(p-1)Nsl}\Bigg(e^{(2s-(p-1)N)sl}-1\Bigg)\int_{\mathbb{R}^N}|w_i|^{2p}dx,
\end{align*}
then,
\begin{align}\label{3.5}
\tilde{\varphi}_i(l)=
\left\{\begin{array}{cc}>0 & \text{if}~ l<0 \\=0 & \text{if}~  l=0 \\< 0 & \text{if}~ l>0
\end{array}\right.,
\end{align}
which implies that $(ii)$ holds.
\end{proof}

Let $Q:=[\rho_1,R_1]\times[\rho_2,R_2]$, and let
\begin{equation*}
\gamma_0(t_1,t_2):=(t_1\star w_1,t_2\star w_2)\in H^{rad}_{a_1}\times H^{rad}_{a_2}, \quad\forall (t_1,t_2)\in Q.
\end{equation*}
We introduce the minimax class
\begin{equation*}
\Gamma:=\{\gamma\in C(Q, H^{rad}_{a_1}\times H^{rad}_{a_2}):\gamma=\gamma_0~\text{on}~\partial Q\}.
\end{equation*}

\begin{lemma}\label{l3.4}
There holds
\begin{equation*}
\sup_{\partial Q}E(\gamma_0)\leqslant
\max\{I_{\mu_1}(w_{a_1,\mu_1}),I_{\mu_2}(w_{a_2,\mu_2})\}+\varepsilon.
\end{equation*}
\end{lemma}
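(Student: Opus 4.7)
The plan is to exploit two facts: the coupling term in $E$ is nonnegative, and each one-dimensional slice $l\mapsto I_{\mu_i}(l\star w_i)$ reaches its maximum exactly at the minimizer $w_{a_i,\mu_i}$ from Lemma~\ref{l2.5}. Since $\int u^p v^p\,dx\geq 0$, for any $(u_1,u_2)\in H_{a_1}\times H_{a_2}$ we have
\begin{equation*}
E(u_1,u_2)\leq I_{\mu_1}(u_1)+I_{\mu_2}(u_2),
\end{equation*}
so applied to $\gamma_0(t_1,t_2)=(t_1\star w_1,t_2\star w_2)$ with $w_i=w_{a_i,\mu_i+\beta}$, this yields $E(\gamma_0(t_1,t_2))\leq \varphi_1(t_1)+\varphi_2(t_2)$. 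It therefore suffices to control each $\varphi_i$ along the boundary $\partial Q$.

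The main intermediate step is to show $\sup_{l\in\mathbb{R}}\varphi_i(l)=I_{\mu_i}(w_{a_i,\mu_i})$. By Lemma~\ref{l2.7}(iii) applied with $\mu=\mu_i$ and base point $w_i$, the map $\varphi_i$ has a unique maximizer $l_i^*$ characterized by $l_i^*\star w_i\in\mathcal{P}(a_i,\mu_i)$. Since $w_i=w_{a_i,\mu_i+\beta}$ is a rescaling of $w_0$ and the star action is itself a rescaling that preserves $L^2$ mass, $l_i^*\star w_i$ is a positive radial $w_0$-scaling of mass $a_i$ lying in $\mathcal{P}(a_i,\mu_i)$; the constraints $\|u\|_{L^2}=a_i$ and $u\in\mathcal{P}(a_i,\mu_i)$ pin down the two scaling parameters uniquely, and these coincide with the parameters defining $w_{a_i,\mu_i}$ in Lemma~\ref{l2.5}. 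Hence $l_i^*\star w_i=w_{a_i,\mu_i}$, and in particular $\varphi_i(l)\leq I_{\mu_i}(w_{a_i,\mu_i})$ for all $l\in\mathbb{R}$. (One may equivalently verify this by a direct computation using \eqref{2.27}--\eqref{2.29}.)

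It remains to decompose $\partial Q$ into its four sides. On $\{\rho_1\}\times[\rho_2,R_2]$, Lemma~\ref{l3.3}(i) gives $\varphi_1(\rho_1)<\varepsilon$, so
\begin{equation*}
E(\gamma_0(\rho_1,t_2))\leq \varphi_1(\rho_1)+\varphi_2(t_2)<\varepsilon+I_{\mu_2}(w_{a_2,\mu_2})\leq \max\{I_{\mu_1}(w_{a_1,\mu_1}),I_{\mu_2}(w_{a_2,\mu_2})\}+\varepsilon.
\end{equation*}
The symmetric estimate on $[\rho_1,R_1]\times\{\rho_2\}$ uses $\varphi_2(\rho_2)<\varepsilon$. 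On the remaining two sides, $\varphi_1(R_1)\leq 0$ and $\varphi_2(R_2)\leq 0$ give even sharper bounds that still fit under $\max\{\cdot\}+\varepsilon$. Taking the supremum over $\partial Q$ yields the claim.

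I expect the only mildly subtle point to be the identification $\sup_l\varphi_i(l)=I_{\mu_i}(w_{a_i,\mu_i})$. Lemmas~\ref{l2.4} and \ref{l2.7} deliver the lower bound $\sup_l\varphi_i(l)=I_{\mu_i}(l_i^*\star w_i)\geq \inf_{\mathcal{P}(a_i,\mu_i)}I_{\mu_i}=I_{\mu_i}(w_{a_i,\mu_i})$ for free, but the reverse inequality needed here requires the extra observation that the star-orbit of $w_i$ actually passes through $w_{a_i,\mu_i}$. The rest is straightforward bookkeeping along $\partial Q$.
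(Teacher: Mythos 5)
Your proposal is correct and takes essentially the same route as the paper: bound $E(u_1,u_2)\leq I_{\mu_1}(u_1)+I_{\mu_2}(u_2)$ using $\beta>0$, control the boundary values via Lemma \ref{l3.3}, and reduce everything to the key identity $\sup_{l\in\mathbb{R}}I_{\mu_i}(l\star w_i)=I_{\mu_i}(w_{a_i,\mu_i})$. The only (immaterial) difference is how this identity is justified: the paper exhibits the explicit shift $w_{a_i,\mu_i}=\bar l_i\star w_i$ and then applies Lemma \ref{l2.7} at the base point $w_{a_i,\mu_i}$, while you apply Lemma \ref{l2.7} at $w_i$ and identify its maximizer $l_i^*\star w_i$ with $w_{a_i,\mu_i}$ through the uniqueness of the scaling parameters fixed by the mass and Pohozaev constraints, which amounts to the same observation that the star-orbit of $w_i$ passes through $w_{a_i,\mu_i}$.
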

\begin{proof}
For every $(u_1,u_2)\in H^{rad}_{a_1}\times H^{rad}_{a_2}$, we have
\begin{align*}
E(u_1,u_2)=I_{\mu_1}(u_1)+I_{\mu_2}(u_2)-\frac{\beta}{p}\int_{\mathbb{R}}u^p_1u^p_2dx\leqslant I_{\mu_1}(u_1)+I_{\mu_2}(u_2).
\end{align*}
Then, from Lemma \ref{l3.3},
\begin{align*}
E(t_1\star w_1,\rho_2\star w_2)
&\leqslant I_{\mu_1}(t_1\star w_1)+I_{\mu_2}(\rho_2\star w_2)\nonumber\\
&\leqslant I_{\mu_1}(t_1\star w_1)+\varepsilon\nonumber\\
&\leqslant \sup_{l\in\mathbb{R}}I_{\mu_1}(l\star w_1)+\varepsilon.
\end{align*}
By Lemma \ref{l2.5}, we have
\begin{equation*}
w_{a_i,\mu_i}=\bar{l}_i\star w_i, \quad \text{for}~e^{\bar{l}_i}:=(\frac{\mu_i+\beta}{\mu_i})^\frac{1}{s[(p-1)N-2s]}.
\end{equation*}
Then, due to $l_1\star(l_2\star w)=(l_1+l_2)\star w$ for every $l_1,l_2\in\mathbb{R}$ and $w\in H^s(\mathbb{R})$, we have
\begin{equation*}
\sup_{l\in\mathbb{R}}I_{\mu_1}(l\star w_1)=\sup_{l\in\mathbb{R}}I_{\mu_1}(l\star w_{a_1,\mu_1}).
\end{equation*}
As a consequence of Lemma \ref{l2.7},
$$
\sup_{l\in\mathbb{R}}I_{\mu_1}(l\star w_{a_1,\mu_1})=I_{\mu_1}(w_{a_1,\mu_1}).
$$

Therefore, we have
\begin{equation*}
E(t_1\star w_1,\rho_2\star w_2)\leqslant I_{\mu_1}(w_{a_1,\mu_1})+\varepsilon,\quad \forall t_1\in [\rho_1,R_1].
\end{equation*}

Similarly, we have
\begin{equation*}
E(\rho_1\star w_1,t_2\star w_2)\leqslant I_{\mu_2}(w_{a_2,\mu_2})+\varepsilon,\quad \forall t_2\in [\rho_2,R_2],
\end{equation*}

\begin{align*}
E(t_1\star w_1,R_2\star w_2)
&\leqslant I_{\mu_1}(t_1\star w_1)+I_{\mu_2}(R_2\star w_2)\nonumber\\
&\leqslant \sup_{l\in\mathbb{R}} I_{\mu_1}(l\star w_1)=I_{\mu_1}(w_{a_1,\mu_1}),\quad \forall t_1\in [\rho_1,R_1],
\end{align*}
and
$$
E(R_1\star w_1,t_2\star w_2)\leqslant I_{\mu_2}(w_{a_2,\mu_2}),\quad \forall t_2\in [\rho_2,R_2].
$$
Hence, the conclusion of Lemma \ref{l3.4}  holds.
\end{proof}

\begin{lemma}\label{l3.5}
For every $\gamma\in \Gamma$, there exists $(t_{1,\gamma},t_{2,\gamma})\in Q$ such that $\gamma(t_{1,\gamma},t_{2,\gamma})\in\mathcal{P}(a_1,\mu_1+\beta)\times \mathcal{P}(a_2,\mu_2+\beta)$.
\end{lemma}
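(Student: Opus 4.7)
The plan is to recast the claim as a two-dimensional zero-finding problem on $Q$ and apply the Poincaré--Miranda theorem. To this end, introduce the Pohozaev-type functionals
$$G_i(u) := \int_{\mathbb{R}^N}|(-\Delta)^{s/2}u|^2\,dx - \frac{(p-1)N(\mu_i+\beta)}{2ps}\int_{\mathbb{R}^N}|u|^{2p}\,dx,\qquad i=1,2,$$
so that, by the very definition \eqref{2.7}, $u\in\mathcal{P}(a_i,\mu_i+\beta)$ iff $G_i(u)=0$. Writing $\gamma=(\gamma_1,\gamma_2)\in C(Q,H_{a_1}^{rad}\times H_{a_2}^{rad})$, define
$$T:Q\to\mathbb{R}^2,\qquad T(t_1,t_2):=\bigl(G_1(\gamma_1(t_1,t_2)),\, G_2(\gamma_2(t_1,t_2))\bigr).$$
This map is continuous: continuity of each $G_i$ on $H^s(\mathbb{R}^N)$ follows from the continuity of the $H^s$-seminorm and, via the fractional Gagliardo--Nirenberg--Sobolev inequality \eqref{Gagliardo-Nirenberg-Sobolev inequality}, of the $L^{2p}$-norm. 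The goal is therefore to locate $(t_{1,\gamma},t_{2,\gamma})\in Q$ with $T(t_{1,\gamma},t_{2,\gamma})=(0,0)$.

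Next, I analyze $T$ on $\partial Q$, where by definition of $\Gamma$ we have $\gamma=\gamma_0$, so $\gamma_i(t_1,t_2)=t_i\star w_i$ on the faces $\{t_i=\rho_i\}$ and $\{t_i=R_i\}$ (for each fixed value of the other variable). Using $(l\star u)(x)=e^{Nsl/2}u(e^{sl}x)$ and $w_i=w_{a_i,\mu_i+\beta}\in\mathcal{P}(a_i,\mu_i+\beta)$, a direct computation yields
$$G_i(t_i\star w_i)=\frac{(p-1)N(\mu_i+\beta)}{2ps}\|w_i\|_{L^{2p}}^{2p}\,\bigl(e^{2s^2 t_i}-e^{(p-1)Nst_i}\bigr),$$
which, as $(p-1)N>2s$, is strictly positive for $t_i<0$ and strictly negative for $t_i>0$. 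This is exactly the sign information carried by $\tilde{\varphi}_i$ in Lemma \ref{l3.3}(ii), up to the positive factor $s^2$. Since $\rho_i<0<R_i$, this gives on each face of $\partial Q$:
$$G_1(\gamma_1)>0\ \text{on}\ \{t_1=\rho_1\},\quad G_1(\gamma_1)<0\ \text{on}\ \{t_1=R_1\},$$
$$G_2(\gamma_2)>0\ \text{on}\ \{t_2=\rho_2\},\quad G_2(\gamma_2)<0\ \text{on}\ \{t_2=R_2\}.$$

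Finally, I invoke the Poincaré--Miranda theorem applied to $-T$ on the rectangle $Q$: for each $i$, the $i$th component of $-T$ is $\le 0$ on $\{t_i=\rho_i\}$ and $\ge 0$ on $\{t_i=R_i\}$, and continuity of $T$ has already been established. Miranda's theorem then produces $(t_{1,\gamma},t_{2,\gamma})\in Q$ with $T(t_{1,\gamma},t_{2,\gamma})=0$, i.e.\ $\gamma(t_{1,\gamma},t_{2,\gamma})\in\mathcal{P}(a_1,\mu_1+\beta)\times\mathcal{P}(a_2,\mu_2+\beta)$. The main conceptual point is the passage from the one-variable sign analysis of $\tilde{\varphi}_i$ (used only for the unperturbed path $\gamma_0$) to a topological intersection result valid for arbitrary $\gamma\in\Gamma$, and Miranda is precisely the tool that carries out this passage; the only technical care needed is the continuity of $T$, which, as noted, is ensured by the compact embedding $H_r^s(\mathbb{R}^N)\hookrightarrow L^{2p}(\mathbb{R}^N)$ together with the Sobolev embedding \eqref{embedding}.
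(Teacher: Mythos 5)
Your proposal is correct and follows essentially the same route as the paper: your map $T$ is exactly $s^{-2}F_\gamma$, where $F_\gamma$ is the map the paper builds from $\frac{\partial}{\partial l}I_{\mu_i+\beta}(l\star\gamma_i)\big|_{l=0}$, and your boundary sign analysis on $\partial Q$ (where $\gamma=\gamma_0$) is precisely the content of Lemma \ref{l3.3}(ii). The only difference is the final topological step: the paper computes $\deg(F_{\gamma_0},Q,(0,0))=1$ and uses the boundary-dependence of Brouwer degree, while you invoke the Poincar\'e--Miranda theorem, which needs only the face sign conditions and is an equivalent, slightly more elementary finish (and note that for continuity of $T$ the continuous embedding $H^s\hookrightarrow L^{2p}$ suffices; compactness is not needed).
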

\begin{proof}
For $\gamma\in\Gamma$, we use the notation $\gamma(t_1,t_2)=(\gamma_1(t_1,t_2),\gamma_2(t_1,t_2))\in H^{rad}_{a_1}\times H^{rad}_{a_2}$. Considering the map $F_\gamma:Q\rightarrow \mathbb{R}^2$ defined by
\begin{equation*}
F_\gamma(t_1,t_2):=\left(\frac{\partial}{\partial l}I_{\mu_1+\beta}(l\star\gamma_1(t_1,t_2))|_{l=0},~\frac{\partial}{\partial l}I_{\mu_2+\beta}(l\star\gamma_2(t_1,t_2))|_{l=0}\right),
\end{equation*}
from
\begin{align*}
&\frac{\partial}{\partial l}I_{\mu_i+\beta}(l\star\gamma_i(t_1,t_2))|_{l=0}\nonumber\\
&=\frac{\partial}{\partial l}\left(\frac{e^{2s^2l}}{2}\|(-\Delta)^\frac{s}{2}\gamma_i(t_1,t_2)\|^2_{L^2}-\frac{e^{(p-1)Nsl}}{2p}(\mu_i+\beta)\|\gamma_i(t_1,t_2)\|^{2p}_{L^{2p}}\right)|_{l=0}\nonumber\\
&=s^2\|(-\Delta)^\frac{s}{2}\gamma_i(t_1,t_2)\|^2_{L^2}-\frac{(p-1)Ns}{2p}(\mu_i+\beta)\|\gamma_i(t_1,t_2)\|^{2p}_{L^{2p}},
\end{align*}
we deduce that
\begin{equation*}
F_\gamma(t_1,t_2)=(0,0)\quad\text{if and only if}\quad
\gamma(t_1,t_2)\in\mathcal{P}(a_1,\mu_1+\beta)\times \mathcal{P}(a_2,\mu_2+\beta).
\end{equation*}
Now, we will show that $F_\gamma(t_1,t_2)=(0,0)$ has a solution in $Q$ for every $\gamma\in\Gamma$. Since
\begin{align*}
F_{\gamma_0}(t_1,t_2)=&\left(s^2e^{2s^2t_1}\|(-\Delta)^\frac{s}{2} w_1\|^2_{L^2}-\frac{(p-1)Ns}{2p}e^{(p-1)Nst_1}(\mu_1+\beta)\|w_1\|^{2p}_{L^{2p}},\right.\nonumber\\
&\qquad \left.s^2e^{2s^2t_2}\|(-\Delta)^\frac{s}{2}w_2\|^2_{L^2}-\frac{(p-1)Ns}{2p}e^{(p-1)Nst_2}(\mu_2+\beta)\|w_2\|^{2p}_{L^{2p}}\right)\nonumber\\
=&(\tilde{\varphi}_1(t_1),\tilde{\varphi}_2(t_2)).
\end{align*}

By Lemma \ref{l3.3}, we get $(0,0)\notin F_{\gamma_0}(\partial Q)$, and $(0,0)$ is the only solution to $F_{\gamma_0}(t_1,t_2)=(0,0)$ in $Q$. It is easy to compute
$$
\deg(F_{\gamma_0},Q,(0,0))=sgn(\tilde{\varphi}'_1(0)\cdot \tilde{\varphi}'_2(0))=1.
$$

Now, for any $\gamma\in\Gamma$, since $F_\gamma(\partial^+Q)=F_{\gamma_0}(\partial^+Q)$, therefore, $(0,0)\notin F_{\gamma}(\partial Q)$, we get

\begin{equation*}
deg(F_\gamma,Q,(0,0))=deg(F_{\gamma_0},Q,(0,0))=1.
\end{equation*}
Hence, there exists a $(t_{1,\gamma},t_{2,\gamma})\in Q$ such that $F_{\gamma}(t_{1,\gamma},t_{2,\gamma})=(0,0)$.
\end{proof}

\begin{lemma}\label{l3.6}
There exists a bounded Palais-Smale sequence $(u_n,v_n)$ for $E$ on ${H}^{rad}_{a_1}\times{H}^{rad}_{a_2}$ at the level
\begin{equation}\label{eq3.8}
c:=\inf_{\gamma\in\Gamma} \max_{(t_1,t_2)\in Q} E(\gamma(t_1,t_2))>\max\{I_{\mu_1}(w_{a_1,\mu_1}), I_{\mu_2}(w_{a_2,\mu_2})\},
\end{equation}
satisfying the additional condition
\begin{equation}\label{3.11}
G(u_n,v_n)=o(1),
\end{equation}
where $o(1)\rightarrow 0$ as $n\rightarrow \infty$. Furthermore, there exists $\bar{C}>0$ such that
\begin{align*}
\int_{\mathbb{R}^N}(|(-\Delta)^\frac{s}{2}u_n|^2+|(-\Delta)^\frac{s}{2}v_n|^2)dx\geqslant \bar{C}\quad\text{for all}~n,
\end{align*}and $u_n^-,v_n^-\rightarrow 0$ a.e. in $\mathbb{R}^N$ as $n\rightarrow \infty$.
\end{lemma}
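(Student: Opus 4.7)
The plan is to apply Jeanjean's augmented-functional trick. Define
\[
\tilde E(l, u, v) := E(l \star u, l \star v) = \frac{e^{2s^2 l}}{2}\bigl(\|(-\Delta)^{s/2}u\|_{L^2}^2 + \|(-\Delta)^{s/2}v\|_{L^2}^2\bigr) - \frac{e^{(p-1)Nsl}}{2p}\int_{\mathbb{R}^N}\!\bigl(\mu_1|u|^{2p} + 2\beta|u|^p|v|^p + \mu_2|v|^{2p}\bigr)\,dx
\]
on $\mathbb{R} \times H^{rad}_{a_1} \times H^{rad}_{a_2}$. Every $\gamma \in \Gamma$ lifts to $(0, \gamma) \in \tilde\Gamma$, and since $\tilde E(0, u, v) = E(u, v)$ the two min-max levels coincide. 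The mountain-pass geometry is already in hand: Lemma \ref{l3.4} gives $\sup_{\partial Q} E(\gamma_0) \le M + \varepsilon$ where $M := \max\{I_{\mu_1}(w_{a_1,\mu_1}), I_{\mu_2}(w_{a_2,\mu_2})\}$, while Lemma \ref{l3.5} together with Lemma \ref{l3.2} and the choice of $\varepsilon$ in \eqref{3.2} yields $c > M + \varepsilon$, giving the strict mountain-pass separation $c > \sup_{\partial Q} E(\gamma_0)$.

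A standard Ekeland-type deformation lemma applied to $\tilde E$ then produces a sequence $(l_n, \tilde u_n, \tilde v_n)$ with $\tilde E(l_n, \tilde u_n, \tilde v_n) \to c$, $\partial_l \tilde E(l_n, \tilde u_n, \tilde v_n) \to 0$, and $D_{(u,v)}\tilde E(l_n, \tilde u_n, \tilde v_n) \to 0$ on the tangent space to $H^{rad}_{a_1} \times H^{rad}_{a_2}$. Setting $(u_n, v_n) := (l_n \star \tilde u_n, l_n \star \tilde v_n)$ preserves both the $L^2$-norms (since $l\star$ is an $L^2$-isometry) and radial symmetry. A direct differentiation yields
\[
\partial_l \tilde E(l_n, \tilde u_n, \tilde v_n) = s^2\, G(u_n, v_n),
\]
so $G(u_n, v_n) = o(1)$, and the scaling-covariance of $D_{(u,v)}\tilde E$ under $l\star$ together with the isometry property shows that $(u_n, v_n)$ is a Palais--Smale sequence for $E$ on $H^{rad}_{a_1}\times H^{rad}_{a_2}$ at level $c$.

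For the quantitative estimates, combining $E$ and $G$ eliminates the $L^{2p}$-type term:
\[
E(u_n, v_n) - \frac{s}{(p-1)N}\,G(u_n, v_n) = \frac{(p-1)N - 2s}{2(p-1)N}\int_{\mathbb{R}^N}\!\bigl(|(-\Delta)^{s/2}u_n|^2 + |(-\Delta)^{s/2}v_n|^2\bigr)\,dx.
\]
Since $p > 1 + \tfrac{2s}{N}$ the coefficient on the right is strictly positive, so $E(u_n,v_n)\to c$ and $G(u_n,v_n)=o(1)$ deliver a uniform bound on the Gagliardo seminorms, hence on $\|u_n\|_{H^s}+\|v_n\|_{H^s}$ thanks to the prescribed $L^2$-masses. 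The same identity prevents the seminorms from tending to zero, because $c > M > 0$, which furnishes the lower bound $\bar C$. To arrange $u_n^-, v_n^- \to 0$ a.e., replace $(u_n, v_n)$ by $(|u_n|, |v_n|)$: the functional $E$, the Pohozaev quantity $G$, and the $L^2$-masses depend only on absolute values, while the fractional Gagliardo seminorm satisfies $\||u|\|_{\dot H^s(\mathbb{R}^N)} \le \|u\|_{\dot H^s(\mathbb{R}^N)}$; all asymptotic identities are preserved and the resulting sequence trivially satisfies $u_n^- \equiv 0 \equiv v_n^-$.

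The main technical obstacle I anticipate is rigorously transferring the Palais--Smale condition from $\tilde E$ back to $E$. This transfer requires the dilation sequence $\{l_n\}$ to remain bounded, so that the operator norms of $l_n\star$ on $H^s$ (on which the $\dot H^s$ part scales like $e^{s^2 l_n}$) are uniformly controlled. This a posteriori boundedness of $\{l_n\}$ is extracted from the combination $E(u_n, v_n) \to c > 0$ and $G(u_n, v_n) = o(1)$: the upper bound prevents $l_n \to +\infty$ (which would blow up the Gagliardo seminorm), while the lower bound $\bar C$ prevents $l_n \to -\infty$ (which would collapse it).
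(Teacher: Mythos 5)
Your overall strategy (the augmented functional $\tilde E(l,u,v)=E(l\star u,l\star v)$, Jeanjean's trick) is the same as the paper's, and several of your computations are correct: $\partial_l\tilde E(l_n,\tilde u_n,\tilde v_n)=s^2G(l_n\star\tilde u_n,l_n\star\tilde v_n)$, the identity $E-\tfrac{s}{(p-1)N}G=\tfrac{(p-1)N-2s}{2(p-1)N}\int_{\mathbb{R}^N}(|(-\Delta)^{s/2}u|^2+|(-\Delta)^{s/2}v|^2)dx$, and the level separation via Lemmas \ref{l3.2}, \ref{l3.4}, \ref{l3.5}. However, the crucial step you yourself flag --- the boundedness of the dilation parameters $l_n$, needed to transfer the Palais--Smale property from $\tilde E$ to $E$ (the operator norm of $(-l_n)\star$ on $H^s$ is $\max(1,e^{-s^2l_n})$) --- is not established by your argument. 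The bounds $\bar C\le\int_{\mathbb{R}^N}(|(-\Delta)^{s/2}u_n|^2+|(-\Delta)^{s/2}v_n|^2)dx\le C$ concern the already rescaled functions $(u_n,v_n)=(l_n\star\tilde u_n,l_n\star\tilde v_n)$, i.e.\ they only constrain the product $e^{2s^2l_n}\bigl(\|(-\Delta)^{s/2}\tilde u_n\|_{L^2}^2+\|(-\Delta)^{s/2}\tilde v_n\|_{L^2}^2\bigr)$; nothing prevents $l_n\to\pm\infty$ compensated by the seminorms of $(\tilde u_n,\tilde v_n)$ tending to $0$ or $+\infty$, and a bare Ekeland/deformation argument gives no separate control of $(\tilde u_n,\tilde v_n)$. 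The paper closes this gap by invoking Ghoussoub's minimax theorem (Theorem 3.2 in \cite{G93}), which yields a Palais--Smale sequence for $\tilde E$ satisfying $|l_n|+\mathrm{dist}((u_n,v_n),\tilde\gamma_n(Q))\to 0$ for minimizing sequences of paths chosen with first component $l\equiv 0$; it is this localization near the optimizing paths that gives $l_n\to 0$, and without it your transfer step has no support.

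A second genuine gap is the sign condition. Replacing the Palais--Smale sequence by $(|u_n|,|v_n|)$ at the end does not work: $u\mapsto|u|$ only gives the inequalities $E(|u_n|,|v_n|)\le E(u_n,v_n)$ and $\|(-\Delta)^{s/2}|u_n|\|_{L^2}\le\|(-\Delta)^{s/2}u_n\|_{L^2}$, so neither the level $c$, nor $G(|u_n|,|v_n|)=o(1)$, nor --- most importantly --- the estimate $E'|_{H^{rad}_{a_1}\times H^{rad}_{a_2}}(u_n,v_n)\to 0$ is preserved (the map $u\mapsto|u|$ is not $C^1$ and derivative bounds do not pass through it). The paper instead imposes nonnegativity on the minimizing paths, using $\tilde E(l,|u|,|v|)\le\tilde E(l,u,v)$ to choose $\gamma_{1,n},\gamma_{2,n}\ge 0$, and then deduces $u_n^-,v_n^-\to 0$ a.e.\ from the same distance estimate $\mathrm{dist}((u_n,v_n),\tilde\gamma_n(Q))\to 0$; the sign information is inherited from the paths, not imposed on the sequence afterwards. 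Both repairs require the stronger, structured minimax principle rather than a generic deformation lemma.
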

\begin{proof}
The idea comes from \cite{BJN18}. \eqref{eq3.8} is simply from Lemma \ref{l3.5}. We consider the augmented functional $\tilde{E}:\mathbb{R}\times H^{rad}_{a_1}\times H^{rad}_{a_2}\rightarrow\mathbb{R}$ defined by $\tilde{E}(l,u_1,u_2):=E(l\star u_1,l\star u_2)$.
Let
\begin{align*}
\tilde{\gamma}(t_1,t_2):=(l(t_1,t_2),\gamma_1(t_1,t_2),\gamma_2(t_1,t_2)),
\end{align*}
\begin{align*}
\tilde{\gamma}_0(t_1,t_2):=(0,\gamma_0(t_1,t_2))=(0,t_1\star w_1,t_2\star w_2),
\end{align*}
\begin{align*}
\tilde{\Gamma}:=\{\tilde{\gamma}\in C(Q,\mathbb{R}\times H^{rad}_{a_1}\times H^{rad}_{a_2}:\tilde{\gamma}=\tilde{\gamma}_0~\text{on}~\partial Q)\},
\end{align*}
and
\begin{align*}
\tilde{c}:=\inf_{\tilde{\gamma}\in\tilde{\Gamma}} \max_{(t_1,t_2)\in Q} \tilde{E}(\tilde{\gamma}(t_1,t_2)).
\end{align*}
Since for any $\gamma(t_1,t_2)=(\gamma_1(t_1,t_2),\gamma_2(t_1,t_2))\in\Gamma$, $(0,\gamma_1(t_1,t_2),\gamma_2(t_1,t_2))\in \tilde{\Gamma}$, we have $\tilde{c}\leqslant c$. On the other hand, for any $\tilde{\gamma}\in \tilde{\Gamma}$ and $(t_1,t_2)\in Q$, we have
\begin{align*}
\tilde{E}(\tilde{\gamma}(t_1,t_2))=E(l(t_1,t_2)\star \gamma_1(t_1,t_2),l(t_1,t_2)\star\gamma_2(t_1,t_2)),
\end{align*}
and $(l(\cdot)\star\gamma_1(\cdot),l(\cdot)\star\gamma_2(\cdot))\in\Gamma$ due to $\tilde{\gamma}=\tilde{\gamma}_0~\text{on}~\partial Q$, so $c\leqslant\tilde{c}$. Hence,  $c=\tilde{c}$.

Now take a sequence of $\{\tilde{\gamma}_n\}\subset \tilde{\Gamma}$ such that
$$
\lim\limits_{n\to+\infty}\max\limits_{(t_1,t_2)\in Q}\tilde{E}(\tilde{\gamma}_n(t_1,t_2))=\tilde{c}=c.
$$

We may also assume that $\tilde{\gamma}_n=(l_n,\gamma_{1,n},\gamma_{2,n})$ satisfies the following two additional properties: for all $(t_1,t_2)\in Q$,
\begin{itemize}
\item $l_n(t_1,t_2)\equiv 0$,
\item $\gamma_{1,n}(t_1,t_2)\ge 0,\quad \gamma_{2,n}(t_1,t_2)\ge 0,\quad \text{ a.e. in }\mathbb{R}^N$.
\end{itemize}

The first property comes from the fact that
\begin{align*}
\tilde{E}(\tilde{\gamma}(t_1,t_2))
&=E(l(t_1,t_2)\star \gamma_1(t_1,t_2),l(t_1,t_2)\star\gamma_2(t_1,t_2))\nonumber\\
&=\tilde{E}(0,l(t_1,t_2)\star \gamma_1(t_1,t_2),l(t_1,t_2)\star\gamma_2(t_1,t_2)),
\end{align*}
and the second one is the consequence of $\tilde{E}(l,|u|,|v|)\le\tilde{E}(l,u,v)$ and the definition of $\tilde{c}$.

Applying Theorem 3.2 in \cite{G93},  there exists a Palais-Smale sequence $(l_n,u_n,v_n)$ for $\tilde{E}$ on $\mathbb{R}\times H^{rad}_{a_1}\times H^{rad}_{a_2}$ at level $\tilde{c}$, such that
\begin{itemize}
\item $\lim\limits_{n\to+\infty}\tilde{E}(l_n,u_n,v_n)=\tilde{c}=c$,
\item $\lim\limits_{n\to+\infty}|l_n|+dist((u_n,v_n),\tilde{\gamma}_n(Q))=0$,
\item For all $u,v\in H_r^s(\mathbb R^N)$ with $\int_{\mathbb R^N}u_nudx=0$, $\int_{\mathbb R^N}v_nvdx=0$ and $\forall ~l\in \mathbb R$,
$$
\langle \tilde{E}'(l_n,u_n,v_n),(l,u,v)\rangle=o(1)(|l|+\|u\|_{H^s}+\|v\|_{H^s}).
$$
\end{itemize}

Taking $(l,u,v)=(1,0,0)$, direct calculations gives that
\begin{align}\label{eq3.6}
\langle \tilde{E}'&(l_n,u_n,v_n),(1,0,0)\rangle=s^2e^{2s^2l_n}\int_{\mathbb R^N}(|(-\Delta)^\frac{s}{2}u_n|^2+|(-\Delta)^\frac{s}{2}v_n|^2)dx\\ \nonumber
&-\frac{e^{(p-1)Nsl_n}(p-1)Ns}{2p}\int_{\mathbb{R}^N}(\mu_1|u_n|^{2p}+2\beta |u_n|^p|v_n|^p+\mu_2|v_n|^{2p})dx.
\end{align}

From above, we can get that
\begin{align*}
&se^{2s^2l_n}(\frac{(p-1)N}{2}-s)\int_{\mathbb R^N}(|(-\Delta)^\frac{s}{2}u_n|^2+|(-\Delta)^\frac{s}{2}v_n|^2)dx\\
&=(p-1)Ns\tilde{E}(l_n,u_n,v_n)-\langle \tilde{E}'(l_n,u_n,v_n),(1,0,0)\rangle\\ \nonumber
&\to (p-1)Nsc,\quad \text{as}~ n\to+\infty.
\end{align*}

Since $l_n\to 0$ and $p>1+\frac{2s}{N}$, we get that there exist $\bar{C}>0$ and $C>0$, such that
\begin{equation}\label{eq3.7}
\bar{C}\le \int_{\mathbb R^N}(|(-\Delta)^\frac{s}{2}u_n|^2+|(-\Delta)^\frac{s}{2}v_n|^2)dx\le C,
\end{equation}
therefore $(u_n,v_n)$ is bounded in $H^{s}_r(\mathbb R^N)\times H^{s}_r(\mathbb R^N)$.  Using $l_n\to 0$ and \eqref{eq3.6} again, we conclude that $(u_n,v_n)$ satisfies \eqref{3.11}. Now taking $(l,u,v)=(0,u,v)$ for any $(u,v)\in H^{s}_r(\mathbb R^N)\times H^{s}_r(\mathbb R^N)$ with $\int_{\mathbb R^N}u_nudx=0$, $\int_{\mathbb R^N}v_nvdx=0$, due to boundedness of $(u_n,v_n)$ and $l_n\to 0$, it is easy to see that
\begin{align*}
\langle E'(u_n,v_n),(u,v)\rangle&=\langle \tilde{E}'(l_n,u_n,v_n),(0,u,v)\rangle
+O(|l_n|)(\|u\|_{H^s}+\|v\|_{H^s})\\
&=o(1)(\|u\|_{H^s}+\|v\|_{H^s}).
\end{align*}

Therefore, $(u_n,v_n)$ is a bounded Palais-Smale sequence for $E$ on $H^{rad}_{a_1}\times H^{rad}_{a_2}$ at level $c$ with additional condition \eqref{3.11}. Finally, $u_n^-,v_n^-\rightarrow 0$ a.e. in $\mathbb{R}^N$ as $n\rightarrow \infty$ is a simple consequence of $\gamma_{1,n}(t_1,t_2)\ge 0,\quad \gamma_{2,n}(t_1,t_2)\ge 0$ and $\lim\limits_{n\to+\infty} dist((u_n,v_n),\tilde{\gamma}_n(Q))=0$.
\end{proof}

From Lemma \ref{l3.6}, there exists nonnegative functions $\tilde{u},~\tilde{v}$ in $H_r^s(\mathbb{R}^N)$, such that, up to a subsequence
\begin{align*}
&(u_n,v_n)\rightharpoonup (\tilde{u},\tilde{v}),\quad\text{weakly in} \quad  H^s(\mathbb{R}^N)\times H^s(\mathbb{R}^N),\\
&(u_n,v_n)\rightarrow (\tilde{u},\tilde{v}),\quad\text{strongly in}\quad  L^{2p}(\mathbb{R}^N)\times L^{2p}(\mathbb{R}^N),\\
&(u_n,v_n)\rightarrow (\tilde{u},\tilde{v}),\quad a.e. \text{~in}~ \mathbb{R}^N.
\end{align*}

As a consequence $E'|_{H^{rad}_{a_1}\times H^{rad}_{a_2}}(u_n,v_n)\rightarrow 0$, there exist two sequences of real number $\{\lambda_{1,n}\}$ and $\{\lambda_{2,n}\}$ such that
\begin{align}\label{o}
\begin{split}
&\int_{\mathbb{R}^N}\big((-\Delta)^{\frac{s}{2}}u_n(-\Delta)^{\frac{s}{2}}g+(-\Delta)^{\frac{s}{2}}v_n(-\Delta)^{\frac{s}{2}}h-\mu_1|u_n|^{2p-2}u_ng-\mu_2|v_n|^{2p-2}v_nh\big)dx\\
&-\int_{\mathbb R^n}(\beta|u_n|^{p-2}|v_n|^pu_ng+|u_n|^p|v_n|^{p-2}v_nh)dx+\int_{\mathbb{R}^N}(\lambda_{1,n}u_ng+\lambda_{2,n}v_nh)dx\\
&=o(1)(\|g\|_{H^s}+\|h\|_{H^s}),
\end{split}
\end{align}
for every $g,~h\in H^s({\mathbb{R}^N})$ with $o(1)\rightarrow0,~\text{as}~n\rightarrow \infty $.

\begin{lemma}\label{l3.8}
Both $\{\lambda_{1,n}\}$ and $\{\lambda_{2,n}\}$ are bounded sequences and at least one of them is converging, up to a sequence, to a positive value.
\end{lemma}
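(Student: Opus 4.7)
The plan is to establish boundedness of the Lagrange multipliers by testing the Palais-Smale identity \eqref{o} against $(g,h)=(u_n,0)$ and $(g,h)=(0,v_n)$ separately, and then to combine the two tested identities with the approximate Pohozaev relation $G(u_n,v_n)=o(1)$ from \eqref{3.11} to pin down a strict positive lower bound on $\lambda_{1,n}a_1^2+\lambda_{2,n}a_2^2$.

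First I would plug $g=u_n$, $h=0$ into \eqref{o}. Using $\|u_n\|_{L^2}^2=a_1^2$ together with the $H^s$-boundedness of $(u_n,v_n)$ from \eqref{eq3.7} and the continuous embedding $H^s(\mathbb{R}^N)\hookrightarrow L^{2p}(\mathbb{R}^N)$ (since $2p<\frac{2N}{N-2s}$), every term in the resulting identity except $\lambda_{1,n}a_1^2$ is uniformly controlled, and the right hand side is $o(1)\|u_n\|_{H^s}=o(1)$. Hence $\{\lambda_{1,n}\}$ is bounded; the symmetric choice $g=0$, $h=v_n$ gives boundedness of $\{\lambda_{2,n}\}$. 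Up to a subsequence I may therefore assume $\lambda_{i,n}\to\tilde{\lambda}_i\in\mathbb{R}$ for $i=1,2$.

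Next I would add the two tested identities; the $\beta$-coupling contributes one copy from each equation, for a total of $2\beta\int|u_n|^p|v_n|^p\,dx$, which matches the nonlinear term in $G$. Writing $A_n:=\int_{\mathbb{R}^N}\!(|(-\Delta)^{s/2}u_n|^2+|(-\Delta)^{s/2}v_n|^2)\,dx$ and $B_n:=\int_{\mathbb{R}^N}\!(\mu_1|u_n|^{2p}+2\beta|u_n|^p|v_n|^p+\mu_2|v_n|^{2p})\,dx$, the sum reads $A_n+\lambda_{1,n}a_1^2+\lambda_{2,n}a_2^2=B_n+o(1)$. Combining with $G(u_n,v_n)=o(1)$, equivalently $B_n=\tfrac{2ps}{(p-1)N}A_n+o(1)$, I obtain
\[
\lambda_{1,n}a_1^2+\lambda_{2,n}a_2^2=\frac{2ps-(p-1)N}{(p-1)N}\,A_n+o(1).
\]
The subcritical assumption $p<\frac{N}{N-2s}$ yields $2ps-(p-1)N=N-p(N-2s)>0$, and Lemma \ref{l3.6} supplies $A_n\ge\bar C>0$. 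Passing to the limit therefore forces $\tilde{\lambda}_1 a_1^2+\tilde{\lambda}_2 a_2^2\ge\tfrac{2ps-(p-1)N}{(p-1)N}\bar C>0$, so at least one of $\tilde{\lambda}_1,\tilde{\lambda}_2$ is strictly positive.

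The argument is essentially bookkeeping; the only points requiring care are (i) verifying the sign $2ps-(p-1)N>0$, which is exactly where the upper bound $p<\frac{N}{N-2s}$ enters, and (ii) correctly collecting the $\beta$-coupling contributions so that the summed identity matches the definition of $G$. No compactness argument is needed at this stage, only the uniform $H^s$-bound, the Palais-Smale identity \eqref{o}, and the in-limit Pohozaev relation \eqref{3.11}.
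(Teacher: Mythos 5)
Your proposal is correct and follows essentially the same route as the paper: test \eqref{o} with $(u_n,0)$ and $(0,v_n)$ to get boundedness of $\{\lambda_{i,n}\}$ from the $H^s$ and $L^{2p}$ bounds, then sum the two identities and use $G(u_n,v_n)=o(1)$ together with the lower bound $A_n\ge\bar C$ and the sign $2ps-(p-1)N>0$ (from $p<\frac{N}{N-2s}$) to conclude $\lambda_{1,n}a_1^2+\lambda_{2,n}a_2^2$ stays bounded away from $0$ from below, so at least one multiplier converges (up to a subsequence) to a positive value. This matches the paper's argument in both structure and the key estimates.
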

\begin {proof}
By using $(u_n,0)~\text{and}~(0,v_n)$ as test functions in \eqref{o}, we get:
\begin{align*}
&\int_{\mathbb{R}^N}\big(|(-\Delta)^\frac{s}{2}u_n|^2-\mu_1|u_n|^{2p}-\beta |u_n|^p|v_n|^p\big)dx+\lambda_{1,n}a_1^2=o(1),\\
&\int_{\mathbb{R}^N}\big(|(-\Delta)^\frac{s}{2}v_n|^2-\mu_2|v_n|^{2p}-\beta |u_n|^p|v_n|^p\big)dx+\lambda_{2,n}a_2^2=o(1),
\end{align*}
with $o(1)\rightarrow 0,~\text{as}~n\rightarrow\infty$. Hence the boundedness of $\{\lambda_{i,n}\}$ follows from the boundedness of $u_n,~v_n$ in $H^s({\mathbb{R}^N})$ and in $L^{2p}({\mathbb{R}^N})$.
Furthermore, since $(u_n,v_n)$ satisfies \eqref{3.11}, there holds
\begin{align*}
{\lambda_{1,n}}{a_1^2}+{\lambda_{2,n}}{a_2^2}=&-\int_{\mathbb{R}^N}\bigg({|(-\Delta)^{\frac{s}{2}}u_n|^2+|(-\Delta)^{\frac{s}{2}}v_n|^2}-\mu_1{u_n^{2p}}-2\beta {u_n^p}{v_n^p}-\mu_2{v_n^{2p}}\bigg)dx+o(1)\\
&=(\frac{2ps}{(p-1)N}-1)\int_{\mathbb{R}^N}\bigg(|(-\Delta)^{\frac{s}{2}}u_n|^2+|(-\Delta)^{\frac{s}{2}}v_n|^2\bigg)dx+o(1),
\end{align*}
therefore by \eqref{eq3.7},
\begin{equation*}
(\frac{ps}{(p-1)N}-\frac{1}{2})\bar{C}\le {\lambda_{1,n}}{a_1^2}+{\lambda_{2,n}}{a_2^2}\le 2(\frac{2ps}{(p-1)N}-1)C,
\end{equation*}
for $1+\frac{2s}{N}<p<\frac{N}{N-2s}$ and every $n$ sufficiently large. Therefore, at least one sequence of $\{\lambda_{i,n}\}$ is positive and bounded away from $0$. This shows that at least one sequence of $\{\lambda_{i,n}\}$ is converging, up to a sequence, to a positive value.
\end{proof}

Next, we consider converging subsequence $\lambda_{1,n}\rightarrow\tilde{\lambda}_1\in {\mathbb{R}}$ and $\lambda_{2,n}\rightarrow\tilde{\lambda}_2\in {\mathbb{R}},~\text{as}~n\rightarrow\infty$. The sign of $\tilde{\lambda}_i$ plays an important role for the strong convergence of $u_n,~v_n$ in $H^s(\mathbb{R}^N)$.

\begin{lemma}\label{l3.9}
If $\tilde{\lambda}_1>0$ (resp. $\tilde{\lambda}_2>0$), then $u_n\rightarrow \tilde{u}$ (resp. $v_n\rightarrow \tilde{v}$) strongly in $H^s(\mathbb{R}^N)$.
\end{lemma}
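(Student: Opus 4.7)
The plan is to first identify the weak limit $(\tilde u, \tilde v)$ as a weak solution of the limiting system with multipliers $(\tilde\lambda_1, \tilde\lambda_2)$, and then exploit the positivity of $\tilde\lambda_1$ in a coercivity argument to upgrade the weak convergence of $u_n$ to strong convergence in $H^s$. To pass to the limit in \eqref{o}, I fix arbitrary $g, h \in H^s(\mathbb{R}^N)$. The quadratic-form pieces converge by weak $H^s$ convergence of $(u_n, v_n)$; the Lagrange-multiplier pieces converge because $\lambda_{i,n} \to \tilde\lambda_i$ together with $u_n \rightharpoonup \tilde u$ and $v_n \rightharpoonup \tilde v$ in $L^2$; and the nonlinear pieces $\mu_i |u_n|^{2p-2}u_n$ and $\beta|v_n|^p|u_n|^{p-2}u_n$ converge to the expected limits in $L^{2p/(2p-1)}(\mathbb{R}^N)$ thanks to the compact embedding $H_r^s(\mathbb{R}^N) \hookrightarrow L^{2p}(\mathbb{R}^N)$ (since $2 < 2p < \frac{2N}{N-2s}$), combined with Hölder bounds. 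So $(\tilde u, \tilde v)$ satisfies the system weakly with multipliers $\tilde\lambda_1, \tilde\lambda_2$.

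Next I choose $g = u_n - \tilde u$ and $h = 0$ in \eqref{o} and subtract the analogous identity obtained by testing the limit equation for $\tilde u$ against $u_n - \tilde u$. Since $u_n - \tilde u \rightharpoonup 0$ weakly in $H^s$, the test function is uniformly bounded, so the $o(1)(\|g\|_{H^s}+\|h\|_{H^s})$ remainder is $o(1)$, and subtracting yields
\begin{align*}
\int_{\mathbb{R}^N} |(-\Delta)^{s/2}(u_n-\tilde u)|^2 dx + \lambda_{1,n}\int_{\mathbb{R}^N}(u_n-\tilde u)^2 dx = \mathcal{N}_n + o(1),
\end{align*}
where $\mathcal{N}_n$ gathers the nonlinear differences. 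By Hölder with conjugate exponents $(\tfrac{2p}{2p-1}, 2p)$ for the pure term and $(\tfrac{2p}{p-1}, \tfrac{2p}{p}, 2p)$ for the cross term, together with the strong $L^{2p}$ convergence of $u_n, v_n$, every piece of $\mathcal{N}_n$ is $o(1)$; the drift $(\lambda_{1,n} - \tilde\lambda_1)\int_{\mathbb{R}^N} \tilde u(u_n - \tilde u)\,dx$ vanishes as well since $u_n - \tilde u \rightharpoonup 0$ in $L^2$.

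Since $\tilde\lambda_1 > 0$, we have $\lambda_{1,n} \ge \tilde\lambda_1/2 > 0$ for all large $n$, so both (nonnegative) terms on the left-hand side must tend to $0$ individually. This forces $u_n \to \tilde u$ strongly in $\dot H^s$ and in $L^2$, and therefore in $H^s(\mathbb{R}^N)$. The argument for $v_n$ under the assumption $\tilde\lambda_2 > 0$ is entirely symmetric, testing with $g = 0$ and $h = v_n - \tilde v$. The main subtlety to be careful about is that the radial compactness only yields strong convergence in $L^{2p}$, not in $L^2$, so the $L^2$ convergence of $u_n$ cannot be inferred from embeddings alone; it is precisely the positivity of $\tilde\lambda_1$ that unlocks it through the coercive Schrödinger-type identity above.
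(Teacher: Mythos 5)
Your proposal is correct and follows essentially the same route as the paper: test \eqref{o} with $(u_n-\tilde u,0)$, subtract the limiting relation, use the strong $L^{2p}$ convergence from the compact radial embedding together with H\"older to kill the nonlinear differences, and let the positivity of $\tilde\lambda_1$ give coercivity in both $\dot H^s$ and $L^2$, hence strong $H^s$ convergence. Your observation that the $L^2$ part cannot come from embeddings and is exactly what $\tilde\lambda_1>0$ provides is precisely the point of the paper's argument.
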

\begin{proof}
Let us suppose that $\tilde{\lambda}_1>0$. By weak convergence of $u_n$ in $H^s(\mathbb{R}^N)$ and strongly convergence in $L^{2p}(\mathbb{R}^N)$, it is easy to get from \eqref{o}:
\begin{align*}
o(1)&=\langle E'({u_n},{v_n})-E'(\tilde{u},\tilde{v}),({u_n}-\tilde{u},0)\rangle +{\tilde{\lambda}_1}\int_{\mathbb{R}^N}({u_n}-\tilde{u})^2dx\\
&=\int_{\mathbb{R}^N}\bigg(|(-\Delta)^{\frac{s}{2}}({u_n}-\tilde{u})|^2+\tilde{\lambda}_1({u_n}-\tilde{u})^2\bigg)dx+o(1),
\end{align*}
with $o(1)\rightarrow0$ and $n\rightarrow\infty$. Since $\tilde{\lambda}_1>0$, this is equivalent to the strong convergence of $u_n$ in $H^s(\mathbb R^N)$. The proof in the case $\tilde{\lambda}_2>0$ is similar.
\end{proof}

At the end of this section, we give the proof of Theorem \ref{t1.1}.
\begin{proof}[{\bf Proof of Theorem \ref{t1.1}}]
By the convergence of $\{\lambda_{1,n}\}$ and $\{\lambda_{2,n}\}$, and the weak convergence $(u_n,v_n)\rightharpoonup(\tilde{u},\tilde{v})$, we obtain that $(\tilde{\lambda}_1,\tilde{\lambda}_2,\tilde{u},\tilde{v})$ is a solution of \eqref{GPE} with at least one $\tilde{\lambda}_i$ positive. We will show that both $\tilde{\lambda}_1, \tilde{\lambda}_2$ are positive, hence by Lemma \ref{l3.9}, $\tilde{u}\in H_{a_1}$, $\tilde{v}\in H_{a_2}$ and the proof is complete.

We prove by contradiction. Without loss of generality, by Lemma \ref{l3.9}, we may assume that $\tilde{\lambda}_1>0$ and $\tilde{\lambda}_2\le 0$. Since $(\tilde{\lambda}_1,\tilde{\lambda}_2,\tilde{u},\tilde{v})$ is a solution of \eqref{GPE} and $\tilde{u},\tilde{v}\geq0$, we have that
\begin{align*}
(-\Delta)^s \tilde v=-\tilde{\lambda}_2 \tilde v +\mu_2 {\tilde v}^{2p-1}+\beta {\tilde u}^p{\tilde v}^{p-1}\geq0  \quad\text{in}~{\mathbb{R}^N},
\end{align*}
and since $2s<N\le 4s$, i.e., $2\le \frac{N}{N-2s}$, from Lemma \ref{l2.3} $(i)$, we can deduce that $\tilde{v}\equiv0$. In particular, this implies that $\tilde{u}$ solves
\begin{equation}
\left\{\begin{aligned}
&(-\Delta)^s{\tilde u}+{\tilde{\lambda}_1}{\tilde u} -{\mu_1}{\tilde u}^{2p-1}=0~~~~\text{in}~{\mathbb{R}^N},\\
&\int_{\mathbb{R}^N} \tilde u^2dx=a_1^2,\quad\text{and} ~\tilde u>0  \quad\text{in}~{\mathbb{R}^N},
\end{aligned}\right.
\end{equation}
so that $\tilde u=w_{a_1,\mu_1}\in \mathcal{P}(a_1,\mu_1)$. However, due to strong convergence of $u_n,~v_n$ in $L^{2p}(\mathbb R^N)$, we obtain due to \eqref{3.11},
\begin{align*}
c=\lim_{n\rightarrow\infty}E(u_n,v_n)&=\lim_{n\rightarrow\infty}{\frac{(p-1)N-2s}{4ps}}\int_{\mathbb{R}^N}(\mu_1u_n^{2p}+2\beta u_n^pv_n^p+\mu_2 v_n^{2p})dx\\
&=\frac{(p-1)N-2s}{4ps}\int_{\mathbb{R}^N}\mu_1{w_{a_1,\mu_1}}^{2p}dx=I_{\mu_1}(w_{a_1,\mu_1}).
\end{align*}
This is a contradiction with Lemma \ref{l3.6}.  Therefore, both $\tilde{\lambda}_1, \tilde{\lambda}_2$ are positive.
\end{proof}

\section{Proof of Theorem \ref{t1.2}}\label{sec4}

In this section, we devote to prove Theorem \ref{t1.2}. The proof is divided into two parts. Firstly, we show the existence of a positive solution $(\bar u,\bar v)$, and secondly we characterize it as a ground state. The proof of the theorem is based on a mountain pass argument. For $(u,v)\in H^{rad}_{a_1}\times H^{rad}_{a_2}$, we consider the function
\begin{equation*}
E(l\star(u,v))=\frac{e^{2s^2l}}{2} \int_{\mathbb{R}^N}(|(-\Delta)^\frac{s}{2}u|^2+|(-\Delta)^\frac{s}{2}v|^2)dx-\frac{e^{(p-1)Nsl}}{2p}\int_{\mathbb{R}^N}(\mu_1u^{2p}+2\beta u^pv^p+\mu_2{v^{2p}})dx,
\end{equation*}
where $l\star(u,v)=(l\star{u},l\star{v})$. If $(u,v)\in{H^{rad}_{a_1}}\times{H^{rad}_{a_2}}$, then ~ $l\star(u,v)\in{H^{rad}_{a_1}}\times{H^{rad}_{a_2}}$~ for any $l\in\mathbb{R}$. Similar to Lemma \ref{l2.7}, it holds:

\begin{lemma}\label{l3.11}
Let $(u,v)\in{H^{rad}_{a_1}\times{H^{rad}_{a_2}}}$. Then
\begin{align}
&\lim_{l\rightarrow{-\infty}}{\int_{\mathbb{R}}(|(-\Delta)^\frac{s}{2}{l\star{u}}|^2+|(-\Delta)^\frac{s}{2}{l\star{v}}|^2)dx=0},\\
&\lim_{l\rightarrow{+\infty}}{\int_{\mathbb{R}}(|(-\Delta)^\frac{s}{2}{l\star{u}}|^2+|(-\Delta)^\frac{s}{2}{l\star{v}}|^2)dx={+\infty}},\\
&\lim_{l\rightarrow{-\infty}}{E(l\star(u,v))=0^{+}}\text{and}~\lim_{l\rightarrow{+\infty}}{E(l\star(u,v))={-\infty}}.
\end{align}
\end{lemma}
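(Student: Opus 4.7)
The strategy is to reduce everything to the explicit scaling formula $(l\star u)(x) = e^{Ns l/2}u(e^{sl}x)$ and carry out essentially the same computation that appears in the proof of Lemma \ref{l2.7}, but now keeping both components of $(u,v)$ together. First I would record the three basic identities obtained from the change of variables $y = e^{sl}x$: namely $\|l\star u\|_{L^2} = \|u\|_{L^2}$, $\|(-\Delta)^{s/2}(l\star u)\|_{L^2}^2 = e^{2s^2 l}\|(-\Delta)^{s/2} u\|_{L^2}^2$, and $\|l\star u\|_{L^{2p}}^{2p} = e^{(p-1)Nsl}\|u\|_{L^{2p}}^{2p}$ (with the analogous identities for $v$, and the corresponding one for the cross term $\int u^p v^p\,dx$, which also scales as $e^{(p-1)Nsl}$). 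Substituting these directly into the expressions $\int(|(-\Delta)^{s/2}(l\star u)|^2 + |(-\Delta)^{s/2}(l\star v)|^2)\,dx$ and $E(l\star(u,v))$ gives explicit functions of $l$ of the form $A e^{2s^2 l} - B e^{(p-1)Nsl}$ with $A,B>0$ whenever the coefficients in front are positive (which is the case for $\mu_1,\mu_2,\beta>0$).

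The first two limits are then immediate: the kinetic quantity equals $e^{2s^2 l}(\|(-\Delta)^{s/2} u\|_{L^2}^2 + \|(-\Delta)^{s/2} v\|_{L^2}^2)$, which tends to $0$ as $l\to -\infty$ and to $+\infty$ as $l\to +\infty$. For the third limit, I would use that the hypothesis $p > 1 + \tfrac{2s}{N}$ is equivalent to $(p-1)Ns > 2s^2$, so the two exponents are strictly ordered. As $l\to +\infty$ the nonlinear term (with its negative sign) dominates and $E(l\star(u,v))\to -\infty$. As $l\to -\infty$ both exponentials go to $0$, but the kinetic term $\tfrac{1}{2} e^{2s^2 l}(\cdots)$ dominates the nonlinear term $\tfrac{1}{2p} e^{(p-1)Nsl}(\cdots)$ (since $e^{(p-1)Nsl}/e^{2s^2 l} = e^{[(p-1)Ns - 2s^2] l}\to 0$), and the kinetic term is strictly positive, so $E(l\star(u,v))\to 0^+$.

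I do not foresee any genuine obstacle: the argument is a straightforward transcription of the scalar calculation \eqref{2.13} in Lemma \ref{l2.7} to the two-component setting, with the cross term $\int u^p v^p\,dx$ presenting no new difficulty because it scales exactly like $\int u^{2p}\,dx$ and $\int v^{2p}\,dx$ under $l\star$. The only place where one has to pay any attention is checking that the constants in front of the scaling exponentials are strictly positive for the sign-of-limit statement $E(l\star(u,v))\to 0^+$; this follows at once from $\mu_1,\mu_2,\beta > 0$ together with $(u,v)\neq (0,0)$ on the constraint $H^{rad}_{a_1}\times H^{rad}_{a_2}$.
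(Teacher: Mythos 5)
Your proposal is correct and follows exactly the route the paper intends: the paper gives no separate proof of Lemma \ref{l3.11} but refers to the scaling computation of Lemma \ref{l2.7}, i.e.\ the identities $\|(-\Delta)^{s/2}(l\star u)\|_{L^2}^2=e^{2s^2l}\|(-\Delta)^{s/2}u\|_{L^2}^2$ and $\|l\star u\|_{L^{2p}}^{2p}=e^{(p-1)Nsl}\|u\|_{L^{2p}}^{2p}$ (the cross term scaling the same way), together with $(p-1)Ns>2s^2$. Your treatment of the sign in the limit $E(l\star(u,v))\to 0^+$ via positivity of the kinetic coefficient is exactly the needed observation, so there is nothing to add.
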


The next lemma enlightens the mountain pass structure of the problem.

\begin{lemma}\label{l3.12}
There exists $K>0$ sufficiently small such that
\begin{align}
\sup_{A}E<\inf_{B}E~\text{and}~E(u,v)>0~\text{on}~A,
\end{align}
where
\begin{align}\label{eq4.2}
&A=\{(u,v)\in{H^{rad}_{a_1}}\times{H^{rad}_{a_2}}, |(-\Delta)^{\frac{s}{2}}u|^2_{L^2}+|(-\Delta)^{\frac{s}{2}}v|^2_{L^2}\leq{K}\},\\
&B=\{(u,v)\in{H^{rad}_{a_1}}\times{H^{rad}_{a_2}}, |(-\Delta)^{\frac{s}{2}}u|^2_{L^2}+|(-\Delta)^{\frac{s}{2}}v|^2_{L^2}=2K\}.
\end{align}
\end{lemma}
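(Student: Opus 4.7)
The plan is to reduce the two-component estimate to a one-variable analysis of the function $\phi(T)=\tfrac12 T - CT^{\alpha}$, where $T(u,v):=\|(-\Delta)^{s/2}u\|_{L^2}^2+\|(-\Delta)^{s/2}v\|_{L^2}^2$ and $\alpha:=\tfrac{(p-1)N}{2s}$. Crucially, the assumption $p>1+\tfrac{2s}{N}$ gives $\alpha>1$, which makes the nonlinearity a higher-order perturbation of the fractional kinetic term and produces the mountain-pass geometry on small balls in $T$.

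First I would apply the fractional Gagliardo--Nirenberg--Sobolev inequality \eqref{Gagliardo-Nirenberg-Sobolev inequality} (with exponent parameter $2p-2$) on each component, using the $L^2$ constraints $\|u\|_{L^2}=a_1$, $\|v\|_{L^2}=a_2$, to obtain
\begin{align*}
\int_{\mathbb{R}^N}|u|^{2p}\,dx &\le C\,\|(-\Delta)^{s/2}u\|_{L^2}^{(p-1)N/s},\\
\int_{\mathbb{R}^N}|v|^{2p}\,dx &\le C\,\|(-\Delta)^{s/2}v\|_{L^2}^{(p-1)N/s},
\end{align*}
with $C$ depending on $a_i, N, s, p$. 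For the cross term, Cauchy--Schwarz followed by GNS and AM--GM yields
\[
\int_{\mathbb{R}^N}|u|^p|v|^p\,dx \le \Big(\int|u|^{2p}\Big)^{1/2}\Big(\int|v|^{2p}\Big)^{1/2} \le C\,T(u,v)^\alpha.
\]
Combining these estimates gives the key two-sided bound
\[
\tfrac12 T(u,v) - C_2\,T(u,v)^\alpha \le E(u,v) \le \tfrac12 T(u,v),
\]
where $C_2$ depends only on $a_1,a_2,\mu_1,\mu_2,\beta,N,s,p$.

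Then I would choose $K>0$ small enough so that $4C_2(2K)^{\alpha-1}<1$, which is possible because $\alpha>1$. With such $K$, on $B=\{T=2K\}$ we have
\[
\inf_B E \ge K - C_2(2K)^\alpha = K\bigl(1-C_2(2K)^{\alpha-1}\bigr) > \tfrac{3K}{4},
\]
while on $A=\{T\le K\}$ we have $\sup_A E \le K/2 < 3K/4$, giving $\sup_A E < \inf_B E$. For positivity on $A$, any $(u,v)\in A$ automatically satisfies $T(u,v)>0$ (otherwise $u,v$ would be $L^2$-constants, contradicting $a_1,a_2>0$), so
\[
E(u,v) \ge T(u,v)\bigl(\tfrac12 - C_2\,T(u,v)^{\alpha-1}\bigr) > 0
\]
by the same smallness condition on $K$.

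The entire argument is essentially routine once the estimate $E\ge \phi(T)$ is in hand; the only point worth emphasising is that the hypothesis $p>1+\tfrac{2s}{N}$ is precisely what forces $\alpha>1$, which in turn produces the local well structure of $E$ near the origin and makes the mountain-pass geometry possible. No genuine obstacle is anticipated.
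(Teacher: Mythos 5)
Your proposal is correct and takes essentially the same route as the paper: both control the nonlinear term via the fractional Gagliardo--Nirenberg inequality by $C\,T^{(p-1)N/(2s)}$ with exponent $\tfrac{(p-1)N}{2s}>1$ (exactly because $p>1+\tfrac{2s}{N}$) and then choose $K$ small to separate the levels on $A$ and $B$ and get positivity on $A$. One trivial arithmetic remark: $K-C_2(2K)^{\alpha}=K\bigl(1-2C_2(2K)^{\alpha-1}\bigr)$, so under your smallness condition the bound on $B$ is $>K/2$ rather than $>3K/4$, but since $\sup_A E\le K/2$ the strict inequality $\sup_A E<\inf_B E$ still follows.
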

\begin{proof}
By Gagliardo-Nirenberg-Sobolev inequality \eqref{Gagliardo-Nirenberg-Sobolev inequality}, there holds
\begin{align*}
&\int_{\mathbb{R}^N}(\mu_1u^{2p}+2\beta u^pv^p+\mu_2 v^{2p})dx\\
&\leq C \int_{\mathbb{R}^N}( u^{2p}+ v^{2p})dx\\
&\leq C\bigg(\int_{\mathbb{R}^N}(|(-\Delta)^\frac{s}{2}u|^2+|(-\Delta)^\frac{s}{2}v|^2)dx\bigg)^{\frac{(p-1)N}{2s}},
\end{align*}
for every $(u,v)\in{H^{rad}_{a_1}}\times{H^{rad}_{a_2}}$, where $C>0$ depends on $\mu_1,\mu_2,\beta,a_1,a_2>0$, but not on the choice of $(u,v)$. Now if $(u_1,v_1)\in B$ and $(u_2,v_2)\in A$ (with $K$ to be determined), we have
\begin{align*}
&E(u_1,v_1)-E(u_2,v_2)\\
&\geq{\frac{1} {2}}\Bigg\{ \int_{\mathbb{R}^N}(|(-\Delta)^{\frac{s}{2}}{u_1}|^2+|(-\Delta)^{\frac{s}{2}}{v_1}|^2)dx-\int_{\mathbb{R}^N}(|(-\Delta)^{\frac{s}{2}}{u_2}|^2+|(-\Delta)^{\frac{s}{2}}{v_2}|^2)dx\Bigg\}\\
&~~-\frac{1}{2p}\int_{\mathbb{R}^N}\Bigg(\mu_1u_1^{2p}+2\beta{u_1^pv_1^p}+\mu_2 v_1^{2p}\Bigg)dx\geq \frac{K}{2}-\frac{C}{2p}(2K)^{\frac{(p-1)N}{2s}}\ge \frac{K}{4},
\end{align*}
provided $K>0$ is sufficiently small. Furthermore if necessary, we can make $K$ smaller, then there holds
\begin{align*}
E(u_2,v_2)&\geq \frac{1}{2}\Bigg(\int_{\mathbb{R}^N}(|(-\Delta)^\frac{s}{2}u_2|^2+|(-\Delta)^\frac{s}{2}v_2|^2)dx\Bigg)\\
&-\frac{C}{2p}\Bigg(\int_{\mathbb{R}^N}(|(-\Delta)^\frac{s}{2}u_2|^2+|(-\Delta)^\frac{s}{2}v_2|^2)dx\Bigg)^{\frac{(p-1)N}{2s}}>0,
\end{align*}
for every $(u_2,v_2)\in A$.
\end{proof}

For the next part, we shall introduce a suitable minimax class. Define
\begin{equation}\label{eq4.3}
D:=\{(u,v)\in{H^{rad}_{a_1}}\times{H^{rad}_{a_2}}:|(-\Delta)^{\frac{s}{2}}u|^2_{L^2}+|(-\Delta)^{\frac{s}{2}}v|^2_{L^2}\geq 3K~\text{and}~E(u,v)\leq0\}.
\end{equation}

Recall from Lemma \ref{l2.5} that $w_{a,\mu}$ is the unique positive radial solution of \eqref{2.5}. By Lemma \ref{l3.11}, there exist $l_1<0$ and $l_2>0$ such that
\begin{align*}
&l_1\star(w_{a_1,{C_0}/{a_1^2}},w_{a_2,{C_0}/{a_2^2}})=:(\bar u_1,\bar v_1)\in A,\\
&l_2\star(w_{a_1,{C_0}/{a_1^2}},w_{a_2,{C_0}/{a_2^2}})=:(\bar u_2,\bar v_2)\in D.
\end{align*}
At last, we define
\begin{equation*}
\bar{\Gamma}:=\{\bar{\gamma}\in{C([0,1],{H^{rad}_{a_1}}\times{H^{rad}_{a_2}})}:\bar{\gamma}(0)=(\bar{u}_1,\bar{v}_1),\bar{\gamma}(1)=(\bar{u}_2,\bar{v}_2)\}.
\end{equation*}

Similarly to the proof of Lemma \ref{l3.6}, we can derive the following lemma.
\begin{lemma}\label{l3.13}
There exists a bounded Palais-Smale sequence $(u_n,v_n)$ for $E$ on $H^{rad}_{a_1}\times H^{rad}_{a_2}$ at the level
\begin{equation*}
d:=\inf_{\bar\gamma\in\bar\Gamma}\max_{t\in[0,1]}E(\bar\gamma(t)),
\end{equation*}
satisfying the additional condition
\begin{equation*}
G(u_n,v_n)=o(1),
\end{equation*}
with $o(1)\rightarrow0$ as $n\rightarrow\infty$. Furthermore, $u^{-}_n,v^{-}_n\rightarrow0$ a.e. in $\mathbb{R}^N$ as $n\rightarrow\infty$.
\end{lemma}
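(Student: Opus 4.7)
The plan is to mimic the proof of Lemma~\ref{l3.6} with the two-dimensional minimax structure replaced by a one-dimensional mountain pass. I would work with the augmented functional $\tilde E:\mathbb{R}\times H^{rad}_{a_1}\times H^{rad}_{a_2}\to\mathbb{R}$ defined by $\tilde E(l,u,v):=E(l\star u,l\star v)$, introduce
\begin{align*}
\tilde{\bar\Gamma}:=\{\tilde{\bar\gamma}=(l,\gamma_1,\gamma_2)\in C([0,1],\mathbb{R}\times H^{rad}_{a_1}\times H^{rad}_{a_2}):\tilde{\bar\gamma}(0)=(0,\bar u_1,\bar v_1),\ \tilde{\bar\gamma}(1)=(0,\bar u_2,\bar v_2)\}
\end{align*}
and set $\tilde d:=\inf_{\tilde{\bar\gamma}\in\tilde{\bar\Gamma}}\max_{t\in[0,1]}\tilde E(\tilde{\bar\gamma}(t))$. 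A double-inclusion argument (embedding $\bar\gamma\mapsto(0,\bar\gamma)$ in one direction, and $(l,\gamma_1,\gamma_2)\mapsto(l(\cdot)\star\gamma_1(\cdot),l(\cdot)\star\gamma_2(\cdot))$ in the other, using $l(0)=l(1)=0$) would give $\tilde d=d$.

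Second, I would pick a minimizing sequence of paths in $\bar\Gamma$ and pass to a sequence $\tilde{\bar\gamma}_n=(0,\gamma_{1,n},\gamma_{2,n})\in\tilde{\bar\Gamma}$ with $\gamma_{i,n}(t)\ge 0$ a.e., using $\tilde E(l,|u|,|v|)\le\tilde E(l,u,v)$ together with $\bar u_i,\bar v_i\ge 0$ at the endpoints. The structural prerequisite for invoking Theorem~3.2 of \cite{G93} is the mountain pass geometry
\begin{equation*}
d\ge \inf_B E>\sup_A E\ge E(\bar u_1,\bar v_1),\qquad E(\bar u_2,\bar v_2)\le 0<d,
\end{equation*}
which follows from Lemma~\ref{l3.12} together with the observation that any path in $\bar\Gamma$ must cross $B$ (since $\|(-\Delta)^{s/2}\bar u_1\|_{L^2}^2+\|(-\Delta)^{s/2}\bar v_1\|_{L^2}^2\le K$ while $\|(-\Delta)^{s/2}\bar u_2\|_{L^2}^2+\|(-\Delta)^{s/2}\bar v_2\|_{L^2}^2\ge 3K$). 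Ghoussoub's principle then delivers a Palais-Smale sequence $(l_n,u_n,v_n)$ for $\tilde E$ at level $\tilde d=d$ with $l_n\to 0$, $\mathrm{dist}((u_n,v_n),\tilde{\bar\gamma}_n([0,1]))\to 0$, and the localized derivative condition.

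Third, I would extract the required conclusions by specializing the test directions exactly as in Lemma~\ref{l3.6}. Probing with $(1,0,0)$ yields the analogue of \eqref{eq3.6}, which combined with $\tilde E(l_n,u_n,v_n)\to d$ and $l_n\to 0$ furnishes simultaneously the Pohozaev-type asymptotic $G(u_n,v_n)=o(1)$ and, via $p>1+\frac{2s}{N}$, a two-sided bound
\begin{equation*}
\bar C\le \int_{\mathbb{R}^N}(|(-\Delta)^{s/2}u_n|^2+|(-\Delta)^{s/2}v_n|^2)dx\le C
\end{equation*}
(the lower bound using $d>0$). Probing with $(0,u,v)$ for $u,v\in H^s_r(\mathbb{R}^N)$ orthogonal in $L^2$ to $u_n,v_n$, combined with the boundedness just proved and $l_n\to 0$, upgrades the constrained Palais-Smale condition for $\tilde E$ at $(l_n,u_n,v_n)$ into the constrained Palais-Smale condition for $E$ at $(u_n,v_n)$ on $H^{rad}_{a_1}\times H^{rad}_{a_2}$. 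Finally, $u_n^-,v_n^-\to 0$ a.e.\ follows from the nonnegativity built into $\tilde{\bar\gamma}_n$ together with $\mathrm{dist}((u_n,v_n),\tilde{\bar\gamma}_n([0,1]))\to 0$.

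The main obstacle is the verification of the mountain pass geometry at the level of paths in $\tilde{\bar\Gamma}$: one genuinely needs $d$ to strictly dominate both endpoint values of $\tilde E$ for Ghoussoub's principle to apply and for the crucial localization $l_n\to 0$ to be valid. Lemma~\ref{l3.12} is tailored exactly to produce this gap, and the choice $(\bar u_i,\bar v_i)=l_i\star(w_{a_1,C_0/a_1^2},w_{a_2,C_0/a_2^2})$ with $l_1\ll 0$, $l_2\gg 0$ (together with Lemma~\ref{l3.11}) ensures both endpoints fall on the correct sides. Once this is in hand, the calculations inherited from Lemma~\ref{l3.6} go through verbatim.
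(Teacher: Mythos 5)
Your proposal is correct and follows essentially the same route as the paper, which itself only states that the lemma is derived "similarly to the proof of Lemma \ref{l3.6}": you reproduce that argument with the augmented functional $\tilde E$, Ghoussoub's theorem, and the probes $(1,0,0)$ and $(0,u,v)$, correctly substituting the one-dimensional mountain-pass geometry (any path from $A$ to $D$ crosses $B$, and $d\ge\inf_B E>0$) for the degree-theoretic intersection property of Lemma \ref{l3.5} and for the bound $c>\max\{I_{\mu_i}(w_{a_i,\mu_i})\}$ used to get the lower kinetic bound. No gaps beyond those already implicit in the paper's own sketch.
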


\begin{lemma}\label{l3.14}
Let $\beta_2$ be defined in \eqref{eq1.2}, if $\beta>\beta_2$, then
\begin{equation}\label{eq4.1}
\sup_{l\in\mathbb{R}}E\Bigg(l\star (w_{a_1,({C_0}/{a^2_1})^{p-1}},w_{a_2,({C_0}/{a^2_2})^{p-1}})\Bigg)<\min\Bigg\{I_{\mu_1}(w_{a_1,\mu_1}),I_{\mu_2}(w_{a_2,\mu_2})\Bigg\}.
\end{equation}
\end{lemma}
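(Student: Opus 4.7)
The plan is to reduce the stated estimate to the defining relation \eqref{eq1.2} of $\beta_2$ by computing $\sup_{l\in\mathbb{R}} E(l\star(w_1^*,w_2^*))$ in closed form, where I write $w_i^* := w_{a_i,(C_0/a_i^2)^{p-1}}$. First, applying Lemma \ref{l2.5} with $\mu_i^0 := (C_0/a_i^2)^{p-1}$ gives $\lambda_{a_i,\mu_i^0}=1$, whence $w_i^* = (a_i/\sqrt{C_0})\,w_0$ is simply a rescaling of the radial ground state $w_0$ of \eqref{2.2}. Next I set
$$g(l) := E(l\star(w_1^*,w_2^*)) = \frac{A}{2}e^{2s^2 l} - \frac{B}{2p}e^{(p-1)Nsl},$$
where $A := \int_{\mathbb{R}^N}(|(-\Delta)^{s/2}w_1^*|^2+|(-\Delta)^{s/2}w_2^*|^2)\,dx$ and $B := \int_{\mathbb{R}^N}(\mu_1|w_1^*|^{2p}+2\beta|w_1^*|^p|w_2^*|^p+\mu_2|w_2^*|^{2p})\,dx$. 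By Lemma \ref{l3.11}, $g$ attains a global maximum at the unique critical point $l^*$ of $g'$.

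Solving $g'(l^*)=0$ gives $e^{s((p-1)N-2s)l^*}=\frac{2psA}{(p-1)NB}$, and substituting back yields, in a manner entirely parallel to Lemma \ref{l2.6}, the identity $\sup_{l\in\mathbb{R}}g(l)=\mathcal{R}(w_1^*,w_2^*)$, with $\mathcal{R}$ as in \eqref{eq1.4}. Using $w_i^* = (a_i/\sqrt{C_0})w_0$ together with \eqref{2.3} and \eqref{2.4}, I obtain the explicit values
$$A = \frac{(a_1^2+a_2^2)(p-1)NC_1}{2psC_0}, \qquad B = \frac{C_1}{C_0^p}\bigl(\mu_1 a_1^{2p}+2\beta a_1^p a_2^p+\mu_2 a_2^{2p}\bigr).$$
Substituting these into $\mathcal{R}(w_1^*,w_2^*)$ and simplifying (the powers of $C_0$ and $C_1$ combine cleanly since the exponents $(p-1)N/((p-1)N-2s)$ and $2s/((p-1)N-2s)$ differ by exactly $1$), I arrive at
$$\mathcal{R}(w_1^*,w_2^*) = \frac{(p-1)N-2s}{4ps}\,C_1\,C_0^{\frac{2ps-(p-1)N}{(p-1)N-2s}}\,\frac{(a_1^2+a_2^2)^{\frac{(p-1)N}{(p-1)N-2s}}}{(\mu_1 a_1^{2p}+2\beta a_1^p a_2^p+\mu_2 a_2^{2p})^{\frac{2s}{(p-1)N-2s}}}.$$

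Comparing with \eqref{2.29}, I notice that $I_{\mu_i}(w_{a_i,\mu_i})$ shares with the expression above the common prefactor $\frac{(p-1)N-2s}{4ps}C_1 C_0^{(2ps-(p-1)N)/((p-1)N-2s)}$, so the desired inequality \eqref{eq4.1} is equivalent to
$$\frac{(a_1^2+a_2^2)^{\frac{(p-1)N}{(p-1)N-2s}}}{(\mu_1 a_1^{2p}+2\beta a_1^p a_2^p+\mu_2 a_2^{2p})^{\frac{2s}{(p-1)N-2s}}}<\min_{i=1,2}\frac{1}{\mu_i^{\frac{2s}{(p-1)N-2s}}a_i^{\frac{4ps-2(p-1)N}{(p-1)N-2s}}}.$$
By the defining relation \eqref{eq1.2}, equality holds at $\beta=\beta_2$; moreover, the left-hand side is strictly decreasing in $\beta$ because $\beta$ appears only in the positive term $2\beta a_1^p a_2^p$, which enters the denominator raised to the positive power $\frac{2s}{(p-1)N-2s}$. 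Hence the strict inequality persists for all $\beta>\beta_2$, concluding the argument. The main obstacle is the careful bookkeeping of exponents in the Rayleigh-quotient computation; once those cancellations are verified, the rest is an immediate consequence of the identities established in Section \ref{sec2}.
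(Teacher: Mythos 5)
Your proposal is correct and follows essentially the same route as the paper: identify $w_{a_i,(C_0/a_i^2)^{p-1}}=(a_i/\sqrt{C_0})\,w_0$ via Lemma \ref{l2.5}, compute $E(l\star(\cdot,\cdot))$ explicitly as a two-exponential function of $l$, maximize in $l$ to get exactly the closed form $\frac{(p-1)N-2s}{4ps}C_1C_0^{\frac{2ps-(p-1)N}{(p-1)N-2s}}\frac{(a_1^2+a_2^2)^{\frac{(p-1)N}{(p-1)N-2s}}}{(\mu_1a_1^{2p}+2\beta a_1^pa_2^p+\mu_2a_2^{2p})^{\frac{2s}{(p-1)N-2s}}}$, and compare with \eqref{2.29} and the defining relation \eqref{eq1.2}, with the monotonicity in $\beta$ giving the strict inequality for $\beta>\beta_2$. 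Your intermediate identification of the maximum with $\mathcal{R}(w_1^*,w_2^*)$ is consistent with the intended constant $R_0=\frac{(p-1)N-2s}{2(p-1)N}\bigl(\frac{2ps}{(p-1)N}\bigr)^{\frac{2s}{(p-1)N-2s}}$ (the factor $(p-1)N-s$ in \eqref{eq1.1} is evidently a typo), and your final constants agree with the paper's.
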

\begin{proof}
By Lemma \ref{l2.5}, direct computation gives
\begin{align*}
&\int_{\mathbb{R}^N}\Bigg(l\star w_{a_1,({C_0}/{a^2_1})^{p-1}}\Bigg)^p\Bigg(l\star w_{a_2,({C_0}/{a^2_2})^{p-1}}\Bigg)^pdx\\
&=\int_{\mathbb{R}^N}e^{(p-1)Nsl}\Bigg(\frac{{a_1}}{C_0^\frac{1}{2}}{w_0(x)}\Bigg)^p\Bigg(\frac{{a_2}}{C_0^\frac{1}{2}}{w_0(x)}\Bigg)^pdx\\
&=\frac{a_1^pa_2^p}{C_0^p}e^{(p-1)Nsl}\int_{\mathbb{R}^N}w_0^{2p}dx=\frac{a_1^pa_2^pC_1}{C_0^p}e^{(p-1)Nsl}.
\end{align*}
Furthermore,
\begin{align*}
E\Bigg(l\star (w_{a_1,({C_0}/{a^2_1})^{p-1}},w_{a_2,({C_0}/{a^2_2})^{p-1}})\Bigg)=&\frac{(p-1)Ne^{2s^2l}}{4ps}(\frac{a_1^2C_1+a_2^2C_1}{C_0})
\\&-\frac{e^{(p-1)Nsl}}{2p}(\frac{\mu_1C_1a_1^{2p}+2\beta C_1 a_1^pa_2^p+\mu_2C_1a_2^{2p}}{C_0^p}).
\end{align*}

Therefore, it is easy to get that
\begin{align*}
&\max_{l\in \mathbb{R}}E\Bigg(l\star (w_{a_1,({C_0}/{a^2})^{p-1}},w_{a_2,({C_0}/{a^2})^{p-1}})\Bigg)\\
&=\frac{(p-1)N-2s}{4ps}\frac{C_1C_0^{\frac{2ps-(p-1)N}{(p-1)N-2s}}(a_1^2+a_2^2)^{\frac{(p-1)N}{(p-1)N-2s}}}{(\mu_1a_1^{2p}+2\beta a_1^pa_2^p+\mu_2a_2^{2p})^{\frac{2s}{(p-1)N-2s}}}.
\end{align*}

Due to \eqref{eq1.2} and \eqref{2.29},  if $\beta>\beta_2$, \eqref{eq4.1} is satisfied.
\end{proof}

\noindent\textbf{Existence of a positive solution at level $d$.}\\
\indent We will prove the existence of positive solution at level $d$ by contradiction. By Lemma \ref{l3.13}, up to a subsequence, we may assume that
\begin{align*}
&(u_n,v_n)\rightharpoonup (\bar{u},\bar{v}),\quad\text{weakly in} \quad  H^s(\mathbb{R}^N)\times H^s(\mathbb{R}^N),\\
&(u_n,v_n)\rightarrow (\bar{u},\bar{v}),\quad\text{strongly in}\quad  L^{2p}(\mathbb{R}^N)\times L^{2p}(\mathbb{R}^N),\\
&(u_n,v_n)\rightarrow (\bar{u},\bar{v}),\quad a.e. \text{~in}~ \mathbb{R}^N.
\end{align*}

Then, it can be easily derived that $(\bar u,\bar v)$ is a solution of \eqref{GPE} for some constants $\bar\lambda_1,\bar\lambda_2\in\mathbb{R}$. Moreover, Lemma \ref{l3.8} and Lemma \ref{l3.9} are applicable. We may assume that $\bar\lambda_1>0$ and $u_n\to \bar{u}$ strongly in $H^s(\mathbb R^N)$.  If $\bar\lambda_2\le 0$, we can derive that $\bar{v}\equiv0$ and $\bar{u}=w_{a_1,\mu_1}$ as in the proof of Theorem \ref{t1.1}. By $G(u_n,v_n)\to 0$ and strong convergence in $L^{2p}(\mathbb R^N)$, $d=I_{\mu_1}(w_{a_1,\mu_1})$. We can consider the path
\begin{equation*}
\bar\gamma(t):=((1-t)l_1+tl_2)\star(w_{a_1,\mu_1},w_{a_2,\mu_2}).
\end{equation*}
Obviously, $\bar\gamma\in\bar\Gamma$. Then by Lemma \ref{l3.14},
\begin{equation*}
d\leq\sup_{t\in[0,1]}E(\bar\gamma(t))\leq\sup_{l\in\mathbb{R}}E(l\star(w_{a_1,\mu_1},w_{a_2,\mu_2}))<I_{\mu_1}(w_{a_1,\mu_1}),
\end{equation*}
which is a contradiction. Therefore, $\bar\lambda_2>0$ and $v_n\to \bar v$ strongly in $H^s(\mathbb R^N)$. This gives that $(\bar\lambda_1,\bar\lambda_2,\bar u,\bar v)$ is a solution of
\eqref{GPE} with $\bar\lambda_1,\bar\lambda_2>0$ and $(\bar u,\bar v)\in H_{a_1}\times H_{a_2}$.

Obviously, we can see that $G(\bar u,\bar v)=0$, i.e., $(\bar u,\bar v)\in F$.

\textbf{Variational characterization of $(\bar{u},\bar{v})$.} In the following, we will prove that
\begin{align*}
E(\bar u,\bar v)&=\inf\{E(u,v);(u,v)\in F\}=\inf_{(u,v)\in H_{a_1}\times H_{a_2}}\mathcal{R}(u,v),
\end{align*}
where $F$ and $\mathcal{R}$ are defined in \eqref{eq1.3} and \eqref{eq1.4}.  Recall the definition of $A$ in \eqref{eq4.2} and $D$ in \eqref{eq4.3}, let us define
\begin{align*}
&A^{+}:=\{(u,v)\in A,~u,~v\geq0~\text{a.e.~in}~\mathbb{R}^N\},\\
&D^{+}:=\{(u,v)\in D,~u,~v\geq0~\text{a.e.~in}~\mathbb{R}^N\}.
\end{align*}
For any $(u_1,v_1)\in A^{+}$ and $(u_2,v_2)\in D^{+}$, let
\begin{align*}
\bar\Gamma(u_1,v_1,u_2,v_2)=:\Bigg\{\bar\gamma\in C([0,1],{H^{rad}_{a_1}}\times{H^{rad}_{a_2}}):\bar\gamma(0)=(u_1,v_1),\bar\gamma(1)=(u_2,v_2)\Bigg\}.
\end{align*}

\begin{lemma}\label{l3.15}
The sets $A^{+}$ and $D^{+}$ are connected by arcs, so that
\begin{equation}\label{3.19}
d=\inf_{\bar\gamma\in\bar\Gamma(u_1,v_1,u_2,v_2)}\max_{t\in[0,1]}E(\bar\gamma(t)),
\end{equation}
for every $(u_1,v_1)\in A^{+}$ and $(u_2,v_2)\in D^{+}$.
\end{lemma}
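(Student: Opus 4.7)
The plan is to first establish that $A^+$ and $D^+$ are path-connected, and then deduce the variational identity by an exchange-of-endpoints argument exploiting the energy separation from Lemma \ref{l3.12}.

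For path-connectedness of $A^+$, given $(u,v),(u',v')\in A^+$, I would compress each via the path $l\in[L,0]\mapsto l\star(u,v)$ for $L$ very negative; since $\|(-\Delta)^{s/2}(l\star u)\|_{L^2}^2=e^{2s^2l}\|(-\Delta)^{s/2}u\|_{L^2}^2$, the Dirichlet energy decreases monotonically, so the path stays in $A^+$. At the compressed end, I would join $L\star(u,v)$ to $L\star(u',v')$ by the $L^2$-normalized convex combination of each component. Using the linearity of the $\star$-action, this combination factors as $c_i(t)\cdot L\star(tu_i+(1-t)u_i')$ with $c_i(t)\in[1,\sqrt{2}]$ bounded, so its Dirichlet energy is $O(e^{2s^2L})$ and lies below $K$ once $L$ is sufficiently negative; nonnegativity and the $L^2$-mass constraints are preserved automatically. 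For $D^+$, the mirror plan applies with $l\to+\infty$: since $(p-1)N>2s$, a direct computation shows that $l\mapsto E(l\star(u,v))$ is monotone decreasing on $[0,+\infty)$ whenever $E(u,v)\leq0$, so the stretching path stays in $D^+$. At the stretched end, the normalized convex combination factors in the same way, and the potential term (scaling as $e^{(p-1)NsL}$) dominates the Dirichlet term (scaling as $e^{2s^2L}$) uniformly in $t$, provided $\|tu_i+(1-t)u_i'\|_{L^{2p}}^{2p}$ is bounded below uniformly, which follows from the nonnegativity and nontriviality of the endpoints together with a Gagliardo--Nirenberg type lower bound (the endpoints have $L^{2p}$-norm bounded away from zero because $E\leq0$ forces the potential term to dominate the Dirichlet part). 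Thus for $L$ large, $E\leq0$ and Dirichlet $\geq3K$ along the combination.

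The variational identity then follows by path surgery. Denote by $d(u_1,v_1,u_2,v_2)$ the infimum in \eqref{3.19}, so that $d=d(\bar u_1,\bar v_1,\bar u_2,\bar v_2)$. For arbitrary $(u_1,v_1)\in A^+$ and $(u_2,v_2)\in D^+$, I would prepend to any $\bar\gamma\in\bar\Gamma(\bar u_1,\bar v_1,\bar u_2,\bar v_2)$ a path in $A^+$ from $(u_1,v_1)$ to $(\bar u_1,\bar v_1)$ and append a path in $D^+$ from $(\bar u_2,\bar v_2)$ to $(u_2,v_2)$. By Lemma \ref{l3.12}, the prepended part satisfies $E\leq\sup_A E<\inf_B E$ and the appended part satisfies $E\leq0<\inf_B E$. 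On the other hand, every $\bar\gamma$ must cross $B$: the continuous function $t\mapsto\|(-\Delta)^{s/2}\bar\gamma(t)\|_{L^2}^2$ takes a value $\leq K$ at $t=0$ and $\geq 3K$ at $t=1$, so by the intermediate value theorem it equals $2K$ at some intermediate $t$, showing $\max_tE(\bar\gamma(t))\geq\inf_B E$. Hence the extensions cannot raise the maximum, so $d(u_1,v_1,u_2,v_2)\leq d$; the reverse inequality is completely symmetric, extending a path in $\bar\Gamma(u_1,v_1,u_2,v_2)$ to a path in $\bar\Gamma(\bar u_1,\bar v_1,\bar u_2,\bar v_2)$ by the same device.

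The main obstacle I anticipate is the uniform control of the convex-combination path in $D^+$: ensuring $E\leq0$ and Dirichlet $\geq 3K$ simultaneously, uniformly in $t\in[0,1]$. This reduces to showing that $\|(-\Delta)^{s/2}(tu_i+(1-t)u_i')\|_{L^2}$ and $\|tu_i+(1-t)u_i'\|_{L^{2p}}$ are bounded below uniformly in $t$, which rests on the nonnegativity of the endpoints (so they cannot cancel in a convex combination) and the scaling asymmetry $(p-1)N>2s$; the choice of $L$ then trades the loss against exponential gains in the potential.
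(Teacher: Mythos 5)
Your proposal is correct and follows essentially the same route as the paper: connect endpoints inside $A^{+}$ (resp. $D^{+}$) by first rescaling with the $\star$-action to a very compressed (resp. stretched) level and then interpolating nonnegative functions with an $L^2$-renormalized combination, using nonnegativity to prevent mass cancellation, the pointwise domination trick for the $L^{2p}$ lower bound, and the exponent gap $(p-1)N>2s$ to keep the Dirichlet part below $K$ (resp. keep $E\le 0$ and the Dirichlet part above $3K$); the paper merely uses a $\cos\theta,\sin\theta$ interpolation and equalizes the Dirichlet norms first, which is cosmetically different. Your path-surgery deduction of \eqref{3.19} (extensions inside $A^{+}$ and $D^{+}$ stay below $\inf_B E$, while every admissible path crosses $B$ by the intermediate value theorem) is exactly the argument the paper leaves implicit with ``follows easily''.
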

\begin{proof}
Equality \eqref{3.19} follows easily once we show that $A^{+}$ and $D^{+}$ are connected by arcs (~as $l*(\bar{u},\bar{v})$ is a path from $A^+$ to $D^{+}$ ).
Let $(u_1,v_1),(u_2,v_2)\in{H^{rad}_{a_1}}\times{H^{rad}_{a_2}}$ be nonnegative functions such that
\begin{align}\label{3.24}
\int_{\mathbb{R}^N}(|(-\Delta)^\frac{s}{2}{u_1}|^2+|(-\Delta)^\frac{s}{2}{v_1}|^2)dx=\int_{\mathbb{R}^N}(|(-\Delta)^\frac{s}{2}u_2|^2+|(-\Delta)^\frac{s}{2}v_2|^2)dx=\alpha^2,
\end{align}
for some $\alpha>0$.
For $l\in\mathbb{R}$ and $\theta\in[0,\frac{\pi}{2}]$,
\begin{equation*}
h(l,\theta)=(\cos\theta(l\star u_1)(x)+\sin\theta(l\star u_2)(x),\cos\theta(l\star v_1)(x)+\sin\theta(l\star v_2)(x)).
\end{equation*}
Set $h=(h_1,h_2)$, we have that $h_1(l,\theta),~h_2(l,\theta)\geq0$ a.e. in $\mathbb{R}^N$.
It is not difficult to check that
\begin{align*}
&\int_{\mathbb{R}^N}{h^2_1(l,\theta)}dx=a^2_1+\sin(2\theta)\int_{\mathbb{R}^N}{u_1}{u_2}dx,\quad \int_{\mathbb{R}^N}{h^2_2(l,\theta)}dx=a^2_2+\sin(2\theta)\int_{\mathbb{R}^N}{v_1}{v_2}dx,\\
&\int_{\mathbb{R}^N}(|(-\Delta)^\frac{s}{2}{h_1(l,\theta)}|^2+|(-\Delta)^\frac{s}{2}{h_2(l,\theta)}|^2)dx\\
&=e^{2s^2l}\bigg(\alpha^2+\sin(2\theta)\int_{\mathbb{R}^N}((-\Delta)^\frac{s}{2}{u_1}(-\Delta)^\frac{s}{2}{u_2}+(-\Delta)^\frac{s}{2}{v_1}(-\Delta)^\frac{s}{2}{v_2})dx\bigg),
\end{align*}
for all $(l,\theta)\in\mathbb{R}\times [0,\frac{\pi}{2}]$. We can deduce that
\begin{align*}
&a_1^2\leq\int_{\mathbb{R}^N}h^2_1(l,\theta)dx\leq2a_1^2\quad\text{and}\quad a_2^2\leq\int_{\mathbb{R}^N}h^2_2(l,\theta)dx\leq2a_2^2.
\end{align*}

Therefore
\begin{equation*}
\alpha^2+\sin(2\theta)\int_{\mathbb{R}^N}((-\Delta)^\frac{s}{2}{u_1}(-\Delta)^\frac{s}{2}{u_2}+(-\Delta)^\frac{s}{2}{v_1}(-\Delta)^\frac{s}{2}{v_2})dx>0,
\end{equation*}and is continuous in $\theta\in [0,\frac{\pi}{2}]$, so there is a constant $C>0$ independent of $l,\theta$, such that
\begin{equation*}
C\alpha^2e^{2s^2l}\leq\int_{\mathbb{R}^N}(|(-\Delta)^\frac{s}{2}h_1(l,\theta)|^2+|(-\Delta)^\frac{s}{2}h_2(l,\theta)|^2)dx\leq2\alpha^2e^{2s^2l}.
\end{equation*}
Thus we can define the function
\begin{equation*}
\hat {h}(l,\theta)(x)=(a_1\frac{h_1(l,\theta)}{\|h_1(l,\theta)\|_{L^2}},a_2\frac{h_2(l,\theta)}{\|h_2(l,\theta)\|_{L^2}}),
\end{equation*}
for $(l,\theta)\in\mathbb{R}\times[0,\frac{\pi}{2}]$.\\
Notice that $\hat{h}(l,\theta)\in{H^{rad}_{a_1}}\times{H^{rad}_{a_2}}$ for every $(l,\theta)$, we can obtain that
\begin{align}\label{3.25}
C c_0\alpha^2e^{2s^2l}\leq\int_{\mathbb{R}^N}(|(-\Delta)^\frac{s}{2}\hat{h_1}(l,\theta)|^2+|(-\Delta)^\frac{s}{2}\hat{h_2}(l,\theta)|^2)dx\leq2 c_1\alpha^2e^{2s^2l},
\end{align}with $c_0=\frac{\min\{a^2_1,a^2_2\}}{\max\{a^2_1,a^2_2\}}$ and $c_1=\frac{\max\{a^2_1,a^2_2\}}{\min\{a^2_1,a^2_2\}}$.

For $u,v\ge 0$ and $t\in [0,\frac{\pi}{2}]$,
\begin{align*}
\|\cos t\cdot u(x)+\sin t\cdot v(x)\|_{L^{2p}}&\ge \max(\cos t\|u\|_{L^{2p}},\sin t\|v\|_{L^{2p}})\\
&\ge \frac{\|u\|_{L^{2p}}\|v\|_{L^{2p}}}{\sqrt{\|u\|^2_{L^{2p}}+\|v\|^2_{L^{2p}}}}.
\end{align*}

Therefore, we have for some constant $C>0$ independent of $l,\theta$,
\begin{align}\label{3.26}
\int_{\mathbb{R}^N}\hat{h}^{2p}_1(l,\theta)dx\geq{Ce^{(p-1)Nsl}}~\text{and}~\int_{\mathbb{R}^N}\hat{h}^{2p}_2(l,\theta)dx\geq Ce^{(p-1)Nsl},
\end{align}
for all $(l,\theta)\in\mathbb{R}\times[0,\frac{\pi}{2}]$. Let $(u_1,v_1),(u_2,v_2)\in A^{+}$, and let $\hat{h}$ as the previous. From \eqref{3.25}, we can deduce there exists $l_0>0$ such that
\begin{align*}
\int_{\mathbb{R}^N}(|(-\Delta)^{\frac{s}{2}}\hat{h}_1(-l_0,\theta)|^2+|(-\Delta)^\frac{s}{2}\hat{h}_2(-l_0,\theta)|^2)dx\leq{K},
\end{align*}
for all $\theta\in[0,\frac{\pi}{2}]$, where $K$ is defined in Lemma \ref{l3.12}. For the choice of $l_0$, let
\begin{equation*}
\sigma_1(r):=\left\{
\begin{aligned}
&-r\star{(u_1,v_1)}=\hat{h}(-r,0),\qquad\qquad\qquad\quad\quad~~{0\leq r\leq{l_0}},\\
&\hat{h}(-{l_0},r-{l_0}),\qquad\qquad\qquad\qquad\quad\quad\quad\quad\quad~{{l_0}<r\leq{{l_0}+\frac{\pi}{2}}},\\
&(r-2{l_0}-\frac{\pi}{2})\star{(u_2,v_2)}=\hat{h}({r-2{l_0}-\frac{\pi}{2}},\frac{\pi}{2}),~{{l_0}+\frac{\pi}{2}<r\leq{2{l_0}+\frac{\pi}{2}}}.
\end{aligned}\right.
\end{equation*}
It is not difficult to check that $\sigma_1$ is a continuous path connecting $(u_1,v_1)$ and  $(u_2,v_2)$ and lying in $A^{+}$.
For the case that condition \eqref{3.24} is not satisfied. Suppose for instance
\begin{align*}
\int_{\mathbb{R}^N}(|(-\Delta)^\frac{s}{2}u_1|^2+|(-\Delta)^\frac{s}{2}v_1|^2)dx>\int_{\mathbb{R}^N}(|(-\Delta)^\frac{s}{2}u_2|^2+|(-\Delta)^\frac{s}{2}v_2|^2)dx.
\end{align*}
Then by Lemma \ref{l3.11}, there exists $l_1<0$ such that
\begin{align*}
\int_{\mathbb{R}^N}(|(-\Delta)^\frac{s}{2}(l_1\star u_1)|^2+|(-\Delta)^\frac{s}{2}(l_1\star v_1)|^2)dx=\int_{\mathbb{R}^N}(|(-\Delta)^\frac{s}{2}u_2|^2+|(-\Delta)^\frac{s}{2}v_2|^2)dx.
\end{align*}
Therefore, to connect $(u_1,v_1)$ and $(u_2,v_2)$ by a path in $A^{+}$, we can at first connect $(u_1,v_1)$ with $l_1\star(u_1,v_1)$ along arc $l*(u_1,v_1)$, then connect $l_1\star(u_1,v_1)$  with $(u_2,v_2)$.  This shows that $A^+$ is path connected. In a similar way, we can prove that $D^{+}$ is also path connected.
\end{proof}

From the previous notation,
\begin{align*}
F:&=\{(u,v)\in H_{a_1}\times H_{a_2}:G(u,v)=0\},
\end{align*}
define its radial subset and positive radial subset
\begin{align*}
F_{rad}:&=\{(u,v)\in H^{rad}_{a_1}\times H^{rad}_{a_2}:G(u,v)=0\},\\
F^+:&=\{(u,v)\in F: u\ge 0, v\ge 0\},\\
F^+_{rad}:&=\{(u,v)\in F_{rad}: u\ge 0,~v\ge 0\},
\end{align*}
where
\begin{align*}
G(u,v)=\int_{\mathbb{R}^N}(|(-\Delta)^\frac{s}{2}u|^2+|(-\Delta)^\frac{s}{2}v|^2)dx-\frac{(p-1)N}{2ps}\int_{\mathbb{R}^N}(\mu_1u^{2p}+2\beta u^pv^p+\mu_2v^{2p})dx.
\end{align*}

For $(u,v)\in H_{a_1}\times H_{a_2}$, let us set
\begin{equation*}
\Psi_{(u,v)}(l)=E(l\star{(u,v)}),
\end{equation*}
where $l\star{(u,v)}=(l\star u,l\star v)$ for short. Similar to the proof of Lemma \ref{l2.7}, we have

\begin{lemma}\label{l3.17}
For every $(u,v)\in H_{a_1}\times H_{a_2}$, there exists a unique $l_{(u,v)}\in{\mathbb{R}}$ such that $l_{(u,v)}\star{(u,v)}\in F$. Moreover $l_{(u,v)}$ is the unique critical point of $\Psi_{(u,v)}$, which is a strict maximum.
\end{lemma}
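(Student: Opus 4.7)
The plan is to carry out the exact two-component analog of Lemma \ref{l2.7}. Using the scaling $(l\star u)(x) = e^{Nsl/2}u(e^{sl}x)$, which preserves the $L^2$ norm and rescales the kinetic and $L^{2p}$ energies multiplicatively, I would write
\[ \Psi_{(u,v)}(l) = \frac{e^{2s^2 l}}{2}\,A \;-\; \frac{e^{(p-1)Nsl}}{2p}\,B, \]
where $A := \int_{\mathbb{R}^N}(|(-\Delta)^{s/2}u|^2 + |(-\Delta)^{s/2}v|^2)\,dx$ and $B := \int_{\mathbb{R}^N}(\mu_1|u|^{2p} + 2\beta|u|^p|v|^p + \mu_2|v|^{2p})\,dx$. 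Both $A$ and $B$ are strictly positive: $A > 0$ because $(u,v)\in H_{a_1}\times H_{a_2}$ with $a_1,a_2>0$ (so neither component is trivial), and $B > 0$ because $\mu_1,\mu_2,\beta>0$.

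Next I would differentiate and observe that
\[ \Psi'_{(u,v)}(l) = s^2 e^{2s^2 l} A - \frac{(p-1)Ns}{2p} e^{(p-1)Nsl} B = s\, G(l\star (u,v)), \]
so that $l\in\mathbb{R}$ is a critical point of $\Psi_{(u,v)}$ if and only if $l\star(u,v)\in F$. The equation $\Psi'_{(u,v)}(l)=0$ rearranges to
\[ e^{s((p-1)N-2s)\, l} \;=\; \frac{2ps\,A}{(p-1)N\, B}, \]
and since $p > 1 + 2s/N$ forces $(p-1)N-2s > 0$, the left-hand side is a strictly increasing bijection of $\mathbb{R}$ onto $(0,\infty)$, producing a unique solution $l_{(u,v)}\in\mathbb{R}$. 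This gives simultaneously the existence and uniqueness of $l_{(u,v)}$ and shows that $l_{(u,v)}\star(u,v)\in F$.

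Finally, to confirm that $l_{(u,v)}$ is a strict global maximum of $\Psi_{(u,v)}$, I would analyze the end behaviour: as $l\to -\infty$ the quadratic term dominates (since $2s^2 < (p-1)Ns$) so $\Psi_{(u,v)}(l)\to 0^{+}$, while as $l\to +\infty$ the $L^{2p}$ term dominates and $\Psi_{(u,v)}(l)\to -\infty$; combined with the uniqueness of the critical point this forces $l_{(u,v)}$ to be the unique strict maximum. As a cross-check one can substitute the critical-point relation into $\Psi''_{(u,v)}(l_{(u,v)})$ to obtain
\[ \Psi''_{(u,v)}(l_{(u,v)}) = \frac{(p-1)N s^2}{2p}\bigl(2s - (p-1)N\bigr)\, e^{(p-1)N s\, l_{(u,v)}}\, B < 0, \]
confirming strict concavity at the critical point. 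The argument is a direct mirror of Lemma \ref{l2.7}, so I anticipate no real obstacle; the only point requiring verification is the strict positivity of $B$, which is automatic from the mass constraints.
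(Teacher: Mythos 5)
Your proof is correct and takes essentially the same route the paper intends: Lemma \ref{l3.17} is stated with only the remark ``similar to the proof of Lemma \ref{l2.7}'', and your argument is precisely that two-component analogue (explicit exponential form of $\Psi_{(u,v)}$, unique solution of the critical-point equation since $(p-1)N-2s>0$, end behaviour plus the second-derivative check). The only slip is the proportionality constant: your (correct) formula for $\Psi'_{(u,v)}(l)$ gives $\Psi'_{(u,v)}(l)=s^{2}\,G(l\star(u,v))$ rather than $s\,G(l\star(u,v))$, which changes nothing since $s>0$.
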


\begin{lemma}\label{l3.18}
There holds $\inf\limits_{F}E=\inf\limits_{F^+}E=\inf\limits_{F_{rad}^+}E$.
\end{lemma}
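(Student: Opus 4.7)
The inclusions $F_{rad}^+ \subseteq F^+ \subseteq F$ immediately yield
\begin{equation*}
\inf_F E \le \inf_{F^+} E \le \inf_{F_{rad}^+} E,
\end{equation*}
so the content is to establish the reverse chain $\inf_{F_{rad}^+} E \le \inf_F E$. My plan is to start from an arbitrary $(u,v) \in F$, pass to symmetric-decreasing rearrangements to land in the nonnegative radial class, restore the Pohozaev constraint by the scaling $l \star \cdot$, and use the max-characterization in Lemma \ref{l3.17} to control the energy.

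Concretely, given $(u,v)\in F$, let $u^*$ and $v^*$ be the Schwarz symmetric-decreasing rearrangements of $|u|$ and $|v|$. These are nonnegative and radial and satisfy $\|u^*\|_{L^2}=a_1$, $\|v^*\|_{L^2}=a_2$, as well as the equimeasurable identities $\|u^*\|_{L^{2p}}=\|u\|_{L^{2p}}$ and $\|v^*\|_{L^{2p}}=\|v\|_{L^{2p}}$. By the fractional Pólya--Szegő inequality (see e.g.\ \cite{FLS16}) I have
\begin{equation*}
\int_{\mathbb{R}^N}|(-\Delta)^{s/2}u^*|^2\,dx \le \int_{\mathbb{R}^N}|(-\Delta)^{s/2}u|^2\,dx,
\end{equation*}
and similarly for $v$, while the Hardy--Littlewood rearrangement inequality gives
\begin{equation*}
\int_{\mathbb{R}^N}|u|^p|v|^p\,dx \le \int_{\mathbb{R}^N}(u^*)^p(v^*)^p\,dx.
\end{equation*}
Using the explicit form of $E(l\star(\cdot,\cdot))$ recorded at the start of Section \ref{sec4}, these two inequalities together imply, term by term for each fixed $l\in\mathbb{R}$,
\begin{equation*}
E\bigl(l\star(u^*,v^*)\bigr) \le E\bigl(l\star(u,v)\bigr).
\end{equation*}

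Now I invoke Lemma \ref{l3.17}: there is a unique $l_0\in\mathbb{R}$ such that $(\tilde u,\tilde v):=l_0\star(u^*,v^*)\in F$, and $l_0$ realizes the maximum of $l\mapsto E(l\star(u^*,v^*))$. Since the scaling $l\star$ preserves both radiality, nonnegativity, and the $L^2$-masses, $(\tilde u,\tilde v)\in F_{rad}^+$. Moreover, as $(u,v)\in F$, Lemma \ref{l3.17} gives $E(u,v)=\max_{l\in\mathbb{R}}E(l\star(u,v))$. Hence
\begin{equation*}
E(\tilde u,\tilde v)=\max_{l\in\mathbb{R}}E\bigl(l\star(u^*,v^*)\bigr)\le\max_{l\in\mathbb{R}}E\bigl(l\star(u,v)\bigr)=E(u,v),
\end{equation*}
so $\inf_{F_{rad}^+}E\le E(u,v)$ for every $(u,v)\in F$, giving $\inf_{F_{rad}^+}E\le\inf_F E$.

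The only non-routine ingredient is the fractional Pólya--Szegő inequality; the rest is a straightforward combination of standard rearrangement facts with the scaling structure already exploited throughout Section \ref{sec2}. I expect no further obstacles since the equality of $L^2$ and $L^{2p}$ norms under $u\mapsto u^*$ is immediate from equimeasurability, and the comparison for the coupling term is the classical Hardy--Littlewood inequality applied to the nonnegative functions $|u|^p$ and $|v|^p$.
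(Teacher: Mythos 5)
Your proof is correct, and it reaches the conclusion by a slightly different mechanism than the paper. The paper argues by contradiction in two separate steps: first it passes from $F$ to $F^+$ using only the modulus inequality $\|(-\Delta)^{s/2}|u|\|_{L^2}\le\|(-\Delta)^{s/2}u\|_{L^2}$, then from $F^+$ to $F_{rad}^+$ using Schwarz symmetrization; in each step it observes that $E$ and $G$ both decrease, so the parameter $l_0$ restoring the constraint $G=0$ satisfies $l_0\le 0$, and then it exploits the identity $E=\bigl(\tfrac12-\tfrac{s}{(p-1)N}\bigr)\int\bigl(|(-\Delta)^{s/2}u|^2+|(-\Delta)^{s/2}v|^2\bigr)dx$ on the constraint together with the factor $e^{2s^2l_0}\le1$ to contradict minimality. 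You instead do both reductions at once (rearranging $|u|,|v|$ directly), compare the fiber maps $l\mapsto E(l\star(u,v))$ pointwise in $l$ via fractional P\'olya--Szeg\H{o} and Hardy--Littlewood (with $\beta>0$ making the coupling term move the right way), and then use the characterization from Lemma \ref{l3.17} that on $F$ one has $E(u,v)=\max_{l}E(l\star(u,v))$, which makes the functional $(u,v)\mapsto\max_l E(l\star(u,v))$ monotone under symmetrization. The ingredients are the same (both proofs ultimately rest on the nonexpansivity of the Gagliardo seminorm under rearrangement and the Hardy--Littlewood inequality for the cross term), but your direct argument avoids the contradiction set-up and the sign analysis of $l_0$, at the modest cost of invoking Lemma \ref{l3.17} explicitly for both $(u,v)$ and $(u^*,v^*)$; the paper's version keeps the first step ($F$ versus $F^+$) lighter, needing no rearrangement there at all. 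The only points worth stating explicitly in your write-up are that $u^*,v^*\not\equiv0$ (automatic from the mass constraints, so Lemma \ref{l3.17} applies) and that $(|u|^p)^*=(u^*)^p$, which you already use implicitly.
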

\begin{proof}
We prove the lemma by contradiction. Suppose there exists $(u,v)\in F$ such that
\begin{align}
0<E(u,v)<\inf\limits_{F^+}E.
\end{align}
For any $u\in{H^s(\mathbb{R}^N)}$, since $\|(-\Delta)^{\frac{s}{2}}|u|\|_{L^2}\le \|(-\Delta)^{\frac{s}{2}} u\|_{L^2}$, we get that $E(|u|,|v|)\le E(u,v)$ and $G(|u|,|v|)\le G(u,v)=0$. Thus, there exists $l_0\leq 0$ such that $G(l_0\star(|u|,|v|))=0$. We obtain that
\begin{align*}
E(l_0\star(|u|,|v|))&=(\frac12-\frac{s}{(p-1)N})e^{2s^2l_0}\bigg\{\int_{\mathbb{R}^N}(|(-\Delta)^\frac{s}{2}|u||^2+|(-\Delta)^\frac{s}{2}|v||^2)dx\bigg\}\\
&\leq(\frac12-\frac{s}{(p-1)N})e^{2s^2l_0}\bigg\{\int_{\mathbb{R}^N}(|(-\Delta)^\frac{s}{2}u|^2+|(-\Delta)^\frac{s}{2}v|^2)dx\bigg\}\\
&=e^{2s^2l_0}E(u,v).
\end{align*}

Therefore
$$
0<E(u,v)<\inf\limits_{F^+}E\leq E(l_0\star(|u|,|v|))\leq e^{2s^2l_0}E(u,v),
$$which contradicts $l_0\leq 0$. Thus $\inf\limits_{F}E=\inf\limits_{F^+}E$.

Next, if there exists  $(u,v)\in F^+$ such that
\begin{align*}
0<E(u,v)<\inf\limits_{F_{rad}^+}E.
\end{align*}
For $u\in{H^s(\mathbb{R}^N)}$, let $u^\ast$ denotes its Schwarz spherical rearrangement. According to the property of Schwarz symmetrization, we have that $E(u^\ast,v^\ast)\leq{E(u,v)}$ and $G(u^\ast,v^\ast)\leq{G(u,v)}=0$. Thus there exists $l_0\leq 0$ such that $G(l_0\star(u^\ast,v^\ast))=0$. Similarly, we get
\begin{align*}
0<E(u,v)<\inf\limits_{F_{rad}^+}E\leq E(l_0\star(u^\ast,v^\ast))\leq e^{2s^2l_0}E(u,v),
\end{align*}
which contradicts $l_0\leq 0$. Thus $\inf\limits_{F^+}E=\inf\limits_{F_{rad}^+}E$.
\end{proof}

\begin{proof}[{\bf Proof of Theorem \ref{t1.2}.}]
We have showed that $(\bar\lambda_1,\bar\lambda_2,\bar u,\bar v)$ is a solution of \eqref{GPE}. Since $(\bar u,\bar v)\in F^+_{rad}$, we just need to show that
\begin{align*}
E(\bar u,\bar v)=d\leq\inf\limits_{F^+_{rad}}E.
\end{align*}
Then $E(\bar u,\bar v)=\inf\limits_{F}E$ follows from  Lemma \ref{l3.18}. Choose any $(u,v)\in F^+_{rad}$. Let us consider the function $\Psi_{(u,v)}(l)=E(l*(u,v))$. By Lemma \ref{l3.11} there exists $l_0\gg1$ such that $(-l_0)\star(u,v)\in A^{+}$ and $l_0\star(u,v)\in D^{+}$. Therefore, the continuous path
\begin{equation*}
\bar\gamma(t)=((2t-1)l_0)\star(u,v)~~~~t\in[0,1],
\end{equation*}
connects $A^{+}$ with $D^{+}$, and by Lemma \ref{l3.15} and Lemma \ref{l3.17}, we can deduce that
\begin{align*}
d\leq\max_{t\in[0,1]}E(\bar\gamma(t))=E(u,v).
\end{align*}
Since this holds for all the elementary in $F^+_{rad}$, we  have
$$
d\leq\inf\limits_{F^+_{rad}}E.
$$

Finally, it remains to show that
\begin{equation}\label{f}
\inf_{F}E=\inf_{H_{a_1}\times H_{a_2}}\mathcal{R}.
\end{equation}
The proof of \eqref{f} is similar to the case for the single equation, see Lemma \ref{l2.6}.
\end{proof}

\section*{Acknowledgments}
This work was supported by the NSFC grant:11971184.


\end{document}